\def\ocirc#1{\ifmmode\setbox0=\hbox{$#1$}\dimen0=\ht0
    \advance\dimen0 by1pt\rlap{\hbox to\wd0{\hss\raise\dimen0
    \hbox{\hskip.2em$\scriptscriptstyle\circ$}\hss}}#1\else
    {\accent"17 #1}\fi}
\newcommand{\R}{\mathbb{{R}}}
\newcommand{\FPh}{\mathcal{F}^\Phi}
\newcommand{\rn}{{\mathbb{R}^n}}
\newcommand{\mup}{{\mu_{\Phi}}}
\newcommand{\ve}{{\varepsilon}}
\newcommand{\dv}{{\rm div\,}}
\newcommand{\xinm}{{\xi_n^{\rm meas}}}
\newcommand{\gam}{{\gamma^{\rm meas}}}
\newcommand{\etam}{{\eta^{\rm meas}}}
\newcommand{\Mb}{{\mathcal{M}_b}}
\newcommand{\B}{{\mathcal{B}}}
\newcommand{\MP}{{\mathcal{M}^\Phi_b}}
\newcommand{\capP}{{\mathrm{C}_\Phi}}
\newcommand{\CapP}{{\mathrm{cap}_\Phi}}
\newcommand{\e}{\varepsilon}
\newcommand{\opA}{{\mathcal{ A}}}
\newcommand{\vp}{\varphi}
\newcommand{\dd}{\mathrm{d}}
\newcommand{\N}{\mathbb{N}}
\newcommand{\wt}{\widetilde}
\theoremstyle{definition}
\def\rp0{{[0,\infty)}}
\newtheorem{theorem}{\bf Theorem} 
\newtheorem{corollary}{\bf Corollary}[section]
\newtheorem{lemma}[corollary]{\bf Lemma}
\newtheorem{remark}[corollary]{\bf Remark} 
\newtheorem{definition}[corollary]{\bf Definition} 
\newtheorem{fact}[corollary]{\bf Fact} 
\newtheorem{proposition}[corollary]{\bf Proposition}
\newtheorem{example}{Example} 
\title{Essentially fully anisotropic Orlicz functions\\ and uniqueness to measure data problem}
\author{Iwona Chlebicka and Piotr Nayar}\address{Iwona Chlebicka and Piotr Nayar\\Faculty of Mathematics, Informatics and Mechanics, University of Warsaw\\ul. Banacha 2, 02-097 Warsaw, Poland} \email{\texttt{i.chlebicka@mimuw.edu.pl,\ p.nayar@mimuw.edu.pl}}
\date{May 2019}
\begin{document}

\subjclass[2010]{46E30, (46E35)\vspace{1mm}} %%ALERT CHECK 35J60 23J70 35B65 35D40

\keywords{Anisotropy, Capacity, Lebesgue points, Measures, Orlicz--Sobolev spaces\vspace{1mm}}

\thanks{{\it Acknowledgements.}\  I. C. acknowledges the support by NCN grant no. 2016/23/D/ST1/01072. 
\vspace{1mm}}

 \pagestyle{empty}
 \begin{abstract} Studying elliptic measure data problem with strongly nonlinear operator whose growth is described by the means of fully anisotropic $N$-function, we prove the uniqueness for a broad class of measures. In order to provide it, the framework of capacities in fully anisotropic Orlicz-Sobolev spaces is developed and the~capacitary characterization of a~bounded measure is given. 
 
 Moreover, we give an example of an anisotropic Young  function $\Phi$, such that $|\xi|^p \lesssim\Phi(\xi)\lesssim |\xi|^p\log^\alpha(1+|\xi|)$, with arbitrary $p\geq 1$, $\alpha>0$, but so irregularly growing that % we call it essentially fully anisotropic. In fact,
  the Orlicz--Sobolev--type space generated by $\Phi$ indispensably requires fully anisotropic tools to be handled.
 \end{abstract}

\maketitle

%\tableofcontents

\section{Introduction}

Our aim is twofold -- to provide a method of construction of essentially anisotropic functions and to prove uniqueness of very weak solutions to a measure data problem
\begin{equation}
\label{eq:intro}
\begin{cases}-\dv\opA(x,\nabla u)=\mu\  \text{in} \ \Omega\\ u=0 \ \text{on} \ \partial\Omega,
\end{cases}
\end{equation}
where the leading part of the operator $\opA:\Omega\times\rn\to\rn$ is measurable with respect to the first variable and with respect to the second one exposes fully anisotropic growth expressed by the means of an $N$-function $\Phi:\R^n\to\rp0$.  Having an irregular datum $\mu$, one cannot expect existence of weak solutions, but the problem~\eqref{eq:intro} can be well-posed for various notions of very weak solutions --- renormalized, entropy, approximable solutions to name a few, cf.~\cite{IC-pocket}. Very weak solutions to problems with Orlicz growth attract substantial attention lately \cite{ACCZG,IC-measure-data,CGZG,CGZG-Wolff,DF,FiPr,gw-e,pgisazg1}. The sharp assumptions on $\mu$ to ensure uniqueness for these type of problems are still not known for any notion of solution, even when $\opA$ has standard $p$-growth with the classical instance of the $p$-Laplacian, see~\cite{BGO,DMMOP1}. The absolute continuity of the measure with respect to relevant capacity is treated as a natural condition ensuring uniqueness therein. Therefore, supplying the recent developments~\cite{ACCZG} carried out within the related anisotropic theory, we prove uniqueness for a broad measure data problems. In order to justify that the class is natural, we develop proper notions of generalized anisotropic Orlicz--Sobolev capacities. 

On the other hand, to justify the effort made to conduct analysis in this unconventional functional setting, we present a relevant example of a function generating {\em essentially fully anisotropic space} of Orlicz--Sobolev--type. As a matter of fact, we give a method of obtaining {\em essentially fully anisotropic Young function} that not only does not have othotropic decomposition, but also is not comparable to any function that can be transformed affinely into an orthotropic function, see Example~\ref{ex:ess-fully-aniso} in Section~\ref{sec:plane}. Additionally, it is (up to equivalence) trapped between $|t|^p$ and $|t|^{p}\log^\alpha(1+|t|)$ for any $p\geq 1$ and $\alpha>0$.

\subsection{Full anisotropy} Our analysis is settled within  fully anisotropic Orlicz spaces, where the norm is defined by the means of the functional\[\xi\mapsto\int_\Omega \Phi(\xi)\,dx,\]
where $\Phi:\R^n\to\rp0$, $n\geq 2$, is a \emph{fully anisotropic $n$-dimensional Young function,} that is even, convex function $\Phi:\rn\to [0,\infty]$, such that, $\Phi(0) = 0$ and  $\{\xi \in \rn: \Phi(\xi) \leq t\}$ is a compact set containing $0$ in its interior for every $t>0$. The function $\Phi$ is called an
\emph{$n$-dimensional $N$-function} if it is a fully anisotropic $n$-dimensional Young function and, in addition, $\Phi$ is finite-valued,
vanishes only at $0$, and  $\lim _{|\xi| \to
0}{\Phi (\xi)}/{|\xi|}=0\ $ and $\ \lim_{|\xi|\to\infty}\Phi(\xi)/|\xi|=\infty$.

An $n$-dimensional Young function $\Phi:\rn\to[0,\infty)$ is called {\em isotropic} if $\Phi(\xi)=\psi(|\xi|)$ with a classical Young function $\psi:[0,\infty)\to[0,\infty)$ and {\em anisotropic} if its dependence on $\xi$ is allowed to be more complicated. The easy instance of an anisotropic $n$-dimensional $N$-function is represented by power functions with different exponents \[\Phi\big((\xi_1,\dots,\xi_n)\big)=\sum_{i=1}^n |\xi_i|^{p_i},\] but we explain further that it can be  surprisingly much more robust. Anisotropic function is \emph{not necessarily} admitting the decomposition called {\em orthotropic} (studied e.g. in~\cite{BB,DFG}): \[\text{$\Phi\big((\xi_1,\dots,\xi_n)\big)=\sum_{i=1}^n\psi_i(|\xi_i|)\quad$ with Young functions $\ \psi_i:\rp0\to\rp0$.}\]  It does {\em not} even have to satisfy the monotonicity formula:
\begin{equation}
\label{monotonicity-property}
\text{if}\quad\xi=(\xi_1,\dots,\xi_n), \ \eta=(\eta_1,\dots,\eta_n),\ \text{ and }\ |\xi_i|\leq|\eta_i|,\ \ \text{ then }\ \ \Phi(\xi)\leq \Phi(\eta).
\end{equation}
Actually, it suffices to take $\Phi:\R^2\to\rp0$ given by
\[\Phi(\xi)=|\xi^1|^2+|\xi^2|^2+|\xi^1-\xi^2|^2\exp(|\xi^1-\xi^2|).\]
Indeed, \eqref{monotonicity-property} is violated since for $(2,0),(3,3)\in\R^2$ we have 
\[\Phi((2,0))=4(1+\exp(2))>4\cdot 5>18=\Phi((3,3)).\]
Moreover, an anisotropic $n$-dimensional $N$-function $\Phi$ can be not-comparable to any function satisfying the monotonicity  of the above form, in turn making also the generated by it functional space of Orlicz-Sobolev type {\em essentially anisotropic} and deprived from multiple handy properties. Let us comment on one more reason why this is non-trivial. The obvious example of a  function not admitting  the orthotropic decomposition is $\Phi(x,y)=(\max\{|x|,|y|\})^2$, but then $\Phi(x,y)$ is comparable to $|x|^2+|y|^2$. Since the Orlicz spaces generated by $ (\max\{|x|,|y|\})^2$ and $|x|^2+|y|^2$ are the same, we identify these functions (as in~\eqref{equiv}). There exists already known example by Trudinger~\cite{Tr}
\[\Phi(x,y)=|x-y|^\alpha+|x|^\beta\log^\delta(c+|x|),\qquad \alpha,\beta\geq 1,\] and $\delta\in \R$ if $\beta>1$, or $\delta>0$ if $\beta=1$, with $c> 1$ large enough to ensure convexity. Of course, such a function can be affinely transformed to the orthotropic one. 

We shall call a Young function {\em essentially fully anisotropic} if after any linear and invertible change of variables the orthotropic decomposition is impossible even up to equivalence (Definition~\ref{def:2} in Section~\ref{sec:plane}).  Despite anisotropic spaces  are considered throughout decades already in various contexts starting from~\cite{Ioffe,Klimov76,Sk1,Tr}, then~\cite{Cfully}, and from the point of view of partial differential equation they have received an attention lately, e.g.~\cite{An0,ACCZG,An,barletta,barlettacianchi,Ci-sym,pgisazg1,JLPS,S}, to our surprise, we were not able to find in the literature a~favorable example justifying development of the general framework and giving intuition how such functions behave.  We address this issue in Section~\ref{sec:plane} by giving an example of a Young function $\Phi$ living on the plane and being not comparable to any function having directional decomposition. Example~\ref{ex:ess-fully-aniso} provides a relevant function, whose construction is based on an inductional procedure involving three competing Young functions. In fact, the basic idea of construction of essentially fully anisotropic is to consider
\[\Phi(x,y)=\phi_1(x)+\phi_2(y)+\phi_3(x-y),\]
with a triple of competing $1$-dimensional Young functions $\phi_1, \phi_2, \phi_3:\R\to\rp0$, such that $\phi_i$ is not comparable with $\phi_j+\phi_k$ for any distinct $1 \leq i,j,k \leq 3$.  These functions compete and, consequently, while observing the images of balls with increasing radii,  the leading direction (the direction of the quickest growth) of the anisotropic $N$-function changes infinitely many times. Additionally, our essentially fully anisotropic $\Phi$ from Example~\ref{ex:ess-fully-aniso} satisfies $|\xi|^p \lesssim\Phi(\xi)\lesssim |\xi|^p\log^\alpha(1+|\xi|)$ with arbitrary $p\geq 1$, $\alpha>0$. The example and estimates of its sublevel sets is provided in Section~\ref{sec:plane}.

\subsection{Measure data problems. } We consider~\eqref{eq:intro} with a finite signed Radon  measure $\mu$, a~vector field $\opA:\Omega\times \rn\to\rn$ is a Caratheodory's function, which is monotone in the sense that for every
$\xi,\eta\in\rn$ such that $\xi\neq\eta$ we have
\begin{equation}\label{mon} (\opA(x,\xi) - \opA(x, \eta)) \cdot (\xi-\eta)> 0.
\end{equation}
%The function $\Phi$ is called an \emph{$n$-dimensional $N$-function} if, in addition to being an $n$-dimensional Young function, $\Phi$ is finite--valued, vanishes only at $0$, and \begin{equation*}%\label{lim}\lim _{\xi \to 0}\frac{\Phi (\xi)}{|\xi|}=0 \qquad \hbox{and} \qquad \lim _{|\xi| \to \infty }\frac{\Phi (\xi)}{|\xi|}=\infty\,.\end{equation*}
We assume that $\opA$ satisfies growth and coercivity expressed by some fully anisotropic $n$-dimensional $N$-function $\Phi:\rn\to\rp0$ through the following conditions  
\begin{flalign}
\opA(x,\xi)\cdot \xi &\geq \Phi(c_1^\Phi\,\xi),\nonumber
\\
c_2^\Phi\widetilde{\Phi}(c_3^\Phi\, \opA(x, \xi))&
\leq  \Phi(c_4^\Phi\,\xi) +h(x),\label{constants}
\end{flalign}
for every  $\xi\in\rn$, a.e. $x\in\Omega$, with some constant $c_1^{\Phi},c_2^\Phi,c_3^\Phi,c_4^\Phi>0$ and a function $0\leq h\in L^1(\Omega)$. Here $\wt\Phi$ denotes the Young conjugate defined in~\eqref{wtPhi}. Existence of so-called {\em Approximable Solutions} and their anisotropic regularity in generalized Marcinkiewicz-type scale for~\eqref{eq:intro} is elaborated in~\cite{ACCZG}.  The results of~\cite{ACCZG} are provided separately in two cases -- for fast growing $\Phi$, namely \begin{equation}
\label{fast-growth-1}\int^\infty \frac{\wt\Phi_\circ(t)}{t^{1+n'}}\,dt=\infty\end{equation}  (for $\Phi_\circ$ being ``average in measure'' of $\Phi$ defined in Section~\ref{ssec:spaces}) the solutions are proven to exist in the {\em weak} sense, while for slowly growing they are {\em approximable}. This condition reflects the case of $W^{1,p}(\Omega)$ with $p>n$. As a matter of fact, making use of~\cite[Theorem~1a]{Ci-cont} and the embedding of~\cite{Cfully}, we note that \eqref{fast-growth-1} holds if and only if any function from Orlicz-Sobolev space $W^{1}L^{\Phi} (\Omega)$ has a~representative that is bounded and continuous. Therefore, in our analysis without loss of generality we shall consider the converse, that is\begin{equation}
\label{slow-growth-1}
\int^\infty \frac{\wt\Phi_\circ(t)}{t^{1+n'}}\,dt<\infty.
\end{equation} 
 If $\mu=f\in L^1(\Omega)$, a function $u \in W_0^{1}\mathcal{L}^{\Phi} (\Omega)$ is called a weak
solution to \eqref{eq:intro}  if
\begin{equation*}
%\label{weak-bound-e}
 \int_\Omega \opA(x, \nabla u) \cdot \nabla
\varphi\, dx = \int_\Omega f\varphi\,dx\quad\text{
for every }\   \varphi \in W_0^{1}\mathcal{L}^{\Phi} (\Omega) \cap
L^{\infty}(\Omega).
\end{equation*}

Of course if the right-hand side datum is not regular enough one cannot expect weak solutions to exist. Before we pass to the very weak solutions, let us point out an obstacle. Distributional solutions to equation $-\Delta_p u=\mu$ when $p$ is small ($1<p<2-1/n$) do not necessarily belong to $W^{1,1}_{loc}(\Omega)$. The easiest example to give is the fundamental solution (when $\mu=\delta_0$). This restriction on the growth can be dispensed by the use of a~weaker derivative. Having the symmetric truncation at the level $k$, denoted as $T_k$, is defined in~\eqref{trunc}. Let us denote by $\mathcal{T}^{1}_0L^\Phi(\Omega)$ the space of measurable functions, such that for every $k>0$ it holds that $T_k(u)\in {W}^{1}_0L^{\Phi}(\Omega)$. For every $u\in \mathcal{T}^{1}_0L^{\Phi}(\Omega)$, we assign a generalized gradient obtained as a limit of gradients of truncations of $u$, namely $\nabla u:=\lim_{k\to\infty}\nabla(T_k(u))$.

By {\em Approximable Solution} to problem \eqref{eq:intro} under the above described regime we mean a~function $u\in \mathcal{T}^{1}_0L^{\Phi}(\Omega)$, if there exists a~sequence $\{h_s \}\subset C_0^\infty (\Omega)$ such that \[\text{$h_s\to \mu$  weakly-$\ast$ in the space of
measures}\] cf.~\eqref{meas1}, such that
the sequence of~weak solutions $\{u_s\}\subset W_0^{1}\mathcal{L}^{\Phi} (\Omega)$ to problems
\begin{equation}\label{prob:trunc} \begin{cases}
-\dv \, \opA(x,\nabla u_s) = h_s &\quad \mathrm{ in}\quad  \Omega\\
u_s =0 &\quad \mathrm{  on}\quad \partial\Omega,
\end{cases}
\end{equation}
satisfies
\begin{equation}
 \label{ae}
 u_s\to u\qquad  \text{a.e. in }\Omega.
 \end{equation}
If the limit function $u$ does not depend on the choice of the approximate sequence $\{h_s\}$, then we say that $u$ is unique.
We recall the formulation of the existence result in Proposition~\ref{prop:ex-sola}. In~\cite{ACCZG} the uniqueness is proven only when $\mu$ is absolutely continuous with respect to Lebesgue's measure. We extend it to the class of measures admitting a decomposition. Below we explain that, when the involved Orlicz-Sobolev space is reflexive, this result entails uniqueness precisely for the class of measures that do not charge sets of anisotropic capacity zero.

\begin{theorem}[Uniqueness I]
\label{theo:uniq} Suppose that $\Omega$ is a bounded Lipschitz domain in $\rn$, $\opA:\Omega\times\rn\to\rn$ satisfies~\eqref{mon} and~\eqref{constants}  with a~fully anisotropic $n$-dimensional $N$-function $\Phi,$ which grows slow enough to satisfy~\eqref{slow-growth-1}, and $\mu\in\Mb(\Omega)$ is such that $\mu=f+\dv G$ in the sense of distributions with some $f\in L^1(\Omega)$ and $G\in L^{\wt \Phi}(\Omega;\rn)$.  Assume futher that  $\{u_s\}$  is a sequence of weak solutions to problems~\eqref{prob:trunc} with data $\{h_s\}\subset C_0^\infty(\Omega)$ with $h_s=f_s+\dv G_s$, where $\{f_s\}$, $\{ {G_s}\}$ are sequences of bounded functions such that \[\text{$f_s\to f$ strongly in $L^1(\Omega)\quad$ and $\quad G_s\to G$ modularly in $L^{\wt \Phi}(\Omega;\rn)$.}\] Then the approximable solution $u$ to~\eqref{eq:intro} obtained as a.e. limit of  $\{u_s\}$  is unique.
\end{theorem}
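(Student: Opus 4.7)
The plan is to compare two approximating sequences via a monotonicity-plus-truncation argument in the spirit of Boccardo--Gallouët--Orsina. Let $\{h_s = f_s + \dv G_s\}$ and $\{h'_s = f'_s + \dv G'_s\}$ be two sequences of data meeting the hypotheses, with corresponding weak solutions $\{u_s\}$ and $\{u'_s\}$ converging a.e.\ to limits $u$ and $u'$; the goal is to show $u = u'$ a.e.\ in $\Omega$. Since $T_k(u_s - u'_s) \in W_0^{1}\mathcal{L}^{\Phi}(\Omega)\cap L^\infty(\Omega)$, it is admissible in the weak formulation of the equation for $u_s - u'_s$, giving
\begin{equation*}
I_s^k := \int_\Omega \bigl(\opA(x,\nabla u_s) - \opA(x,\nabla u'_s)\bigr)\cdot\nabla(u_s - u'_s)\,\1_{\{|u_s - u'_s|<k\}}\,dx = J_s^k - K_s^k,
\end{equation*}
where $J_s^k = \int_\Omega (f_s - f'_s)\,T_k(u_s - u'_s)\,dx$ and $K_s^k = \int_\Omega (G_s - G'_s)\cdot\nabla(u_s - u'_s)\,\1_{\{|u_s - u'_s|<k\}}\,dx$. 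Monotonicity \eqref{mon} forces $I_s^k \geq 0$, while the trivial $L^\infty$-bound on $T_k$ gives $|J_s^k| \leq k\,(\|f_s - f\|_{L^1} + \|f'_s - f\|_{L^1}) \to 0$.

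The main step is to show $K_s^k \to 0$. Testing the equations for $u_s$ and $u'_s$ individually with $T_k(u_s)$ and $T_k(u'_s)$ and using the coercivity in \eqref{constants} together with a Fenchel--Young splitting of the $G_s$- and $G'_s$-terms should yield a uniform-in-$s$ modular bound
\begin{equation*}
\int_\Omega \Phi\bigl(\alpha\,\nabla(u_s - u'_s)\bigr)\,\1_{\{|u_s - u'_s|<k\}}\,dx \leq C(k),
\end{equation*}
for some small $\alpha>0$ coming from the coercivity constants of $\opA$. Combining this with a scaled Fenchel--Young inequality $|G\cdot\xi| \leq \wt\Phi(\lambda G) + \Phi(\xi/\lambda)$ and the modular convergence $G_s, G'_s \to G$ in $L^{\wt\Phi}(\Omega;\rn)$, I would balance $\lambda$ to trigger the modular convergence of $G_s - G'_s$ while contracting the $\Phi$-piece of $\nabla(u_s - u'_s)$ via convexity, so that $K_s^k \to 0$ and therefore $I_s^k \to 0$.

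Strict monotonicity of $\opA(x,\cdot)$, together with a Vitali-type argument as in~\cite{ACCZG}, then upgrades $I_s^k \to 0$ to convergence in measure of $\nabla u_s - \nabla u'_s \to 0$ on $\{|u_s - u'_s|<k\}$. Combined with the a.e.\ convergences $u_s\to u$, $u'_s\to u'$, this gives $\nabla u = \nabla u'$ a.e.\ on $\{|u - u'|<k\}$ and, letting $k\to\infty$, a.e.\ on $\Omega$. Since $T_k(u - u') \in W_0^1\mathcal{L}^\Phi(\Omega)$ has vanishing gradient for every $k$, a Poincar\'e inequality in the Orlicz--Sobolev space forces $T_k(u-u')=0$, so $u = u'$ a.e. The main obstacle is the $K_s^k$ step: in the fully anisotropic, generally non-reflexive ambient $L^{\wt\Phi}$, modular convergence of $\{G_s\}$ is strictly weaker than norm convergence, and the modular control on $\nabla(u_s - u'_s)\,\1_{\{|u_s - u'_s|<k\}}$ inherited from the equation only holds at a specific scale; calibrating the Fenchel--Young parameter $\lambda$ so as to realise the modular-convergence hypothesis while remaining inside the modular-control regime is where the essential anisotropy of $\Phi$ imposes the bulk of the technical work.
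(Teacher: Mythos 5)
There is a genuine gap. Your choice of test function $T_k(u_s-u'_s)$ is a \emph{single} truncation of the difference, whereas the paper's proof tests with the \emph{double} truncation $T_t(T_l v^1_s - T_l v^2_s)$, and this is not a cosmetic difference. The approximable solutions live in $\mathcal{T}^1_0 L^\Phi(\Omega)$: only their truncations have Orlicz--Sobolev regularity, and the a priori estimates \eqref{bdd} give uniform-in-$s$ bounds for $\|\nabla T_l u_s\|_{L^\Phi}$, i.e.\ for $\nabla u_s$ on the set $\{|u_s|<l\}$. They give no control of $\nabla u_s$ on $\{|u_s-u'_s|<k\}$. Consequently the modular bound you assert, $\int_\Omega\Phi(\alpha\,\nabla(u_s-u'_s))\,\1_{\{|u_s-u'_s|<k\}}\,dx\le C(k)$, does not follow from testing the two equations with $T_k(u_s)$ and $T_k(u'_s)$: those tests constrain the gradients on $\{|u_s|<k\}$ and $\{|u'_s|<k\}$, which are unrelated to the set you truncate on. Your whole plan for the $K_s^k$ term is built on this unavailable bound.

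The double truncation is precisely what removes the obstacle you flag at the end. With $\phi=T_t(T_l v^1_s-T_l v^2_s)$, the gradient of the test function is $(\nabla T_l v^1_s-\nabla T_l v^2_s)\1_{\{|T_l v^1_s-T_l v^2_s|<t\}}$, a difference of two quantities each \emph{individually} bounded in $L^\Phi(\Omega;\rn)$ by \eqref{bdd}. The paper then splits the divergence-form term into two pairings, each of a fixed uniformly $L^\Phi$-bounded sequence $\{\nabla T_l v^j_s\}_s$ against $(G^1_s-G^2_s)\to 0$ modularly in $L^{\wt\Phi}$, and applies Proposition~\ref{prop:conv:mod-weak} to send them to zero along a subsequence. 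No Fenchel--Young splitting or scale calibration is needed; the ``essential technical work'' you anticipate is what the double truncation is designed to avoid. To repair your argument you should replace $T_k(u_s-u'_s)$ by $T_t(T_l u_s-T_l u'_s)$, use \eqref{bdd} for the truncated gradients, handle the $G$-term by the split-and-pair mechanism rather than Fenchel--Young, and pass to the limit via Fatou (as the paper does) rather than via Vitali; the conclusion then reads $\nabla T_k v^1=\nabla T_k v^2$ a.e.\ in $\Omega$ for all $k$, which with the zero boundary condition forces $v^1=v^2$.
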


\subsection{Measure characterization } To characterize fully the relevant diffuse measures we need to develop the capacity framework. We do it in the natural case -- in the whole class of reflexive anisotropic Orlicz-Sobolev spaces. For related results on fine behaviour of Orlicz-Sobolev functions in isotropic spaces see~\cite{Ci-cont,MSZ}.

\medskip

 A bounded total variation measure can be decomposed into two parts: absolutely continuous and singular with respect to fully anisotropic Orlicz--Sobolev capacity (Lemma~\ref{lem:basic-decomp}). In order to present the decomposition of measures not charging anisotropic capacities, let us denote  by $\Mb(\Omega)$ the set of measures  of bounded total variation in  $\Omega\subset \R^n$. Anisotropic capacity $\capP$ is defined in Section~\ref{ssec:sob-cap}. By $\MP(\Omega)$ we mean a set of $\Phi$-diffuse measures (called also $\Phi$-soft measures), that is such $\mup\in\Mb$ that for any set in $\rn$ of zero $\mup$-measure its capacity $\capP$ is also zero. One can think that a measure $\mup\in \MP(\Omega)$ is a bounded measure diffuse `absolutely continuous' with respect to $\capP$.  We have the following theorem.

\begin{theorem}[Characterization of measures]\label{theo:decomp} Suppose a measure $\mup\in\Mb(\Omega)$ is defined on a bounded set $\Omega\subset\rn,$ $n\geq 2$ and $\Phi\in\Delta_2\cap\nabla_2$ is a fully anisotropic $n$-dimensional $N$-function. Then \[\mu_\Phi\in\MP(\Omega)\quad\text{if and only if}\quad \mu_\Phi\in L^1(\Omega)+(W_0^{1,\Phi}(\Omega))',\] i.e. there exist
$f\in L^1(\Omega)$ and $G \in L^{\wt{\Phi}}(\Omega;\rn)$, such that $\mu_\Phi=f+\dv G$ in the sense of~distributions. 
\end{theorem}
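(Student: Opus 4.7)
The proof naturally splits into two implications, of very different difficulty: the ``easy'' forward implication $\mu=f+\dv G\Rightarrow \mu\in\MP(\Omega)$, and the Boccardo--Gallouët--Orsina-type converse, which carries all the content.

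For the easy direction I would fix a Borel set $E\subset\Omega$ with $\capP(E)=0$ and, from the definition of $\capP$ together with a standard truncation argument, extract a sequence $\{\varphi_n\}\subset W^{1,\Phi}_0(\Omega)\cap L^\infty(\Omega)$ satisfying $0\leq\varphi_n\leq 1$, $\varphi_n\geq 1$ on a neighbourhood of $E$, and $\varphi_n\to 0$ both in the Orlicz--Sobolev norm and $\capP$-quasi-everywhere. Testing the distributional identity $\mu=f+\dv G$ against the quasi-continuous representatives of the $\varphi_n$ gives
\[\int_\Omega\varphi_n\,d\mu=\int_\Omega f\varphi_n\,dx-\int_\Omega G\cdot\nabla\varphi_n\,dx.\]
The first right-hand term vanishes in the limit by dominated convergence ($f\in L^1$, $0\leq\varphi_n\leq 1$, $\varphi_n\to 0$ a.e.), and the second by the Hölder inequality in Orlicz spaces ($G\in L^{\wt\Phi}$, $\nabla\varphi_n\to 0$ in $L^\Phi$). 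A Hahn--Jordan splitting of $\mu$ together with Fatou's lemma then upgrades this to $|\mu|(E)=0$.

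The converse I would prove by Hahn--Banach separation. View
\[F=\bigl\{\,f+\dv G\,:\,f\in L^1(\Omega),\ G\in L^{\wt\Phi}(\Omega;\rn)\,\bigr\}\]
as a Banach space under the quotient norm $\|\nu\|_F=\inf\{\|f\|_{L^1}+\|G\|_{L^{\wt\Phi}}:\nu=f+\dv G\}$, and embed it, together with $\MP(\Omega)$, into a common ambient Banach space in which $F$ is closed. Supposing for contradiction that $\mu\in\MP(\Omega)\setminus F$, Hahn--Banach delivers a continuous linear functional $\ell$ vanishing on $F$ with $\ell(\mu)\neq 0$. Dualising on each summand separately, and exploiting the reflexivity of $L^\Phi$ and $W^{1,\Phi}_0(\Omega)$ granted by $\Phi\in\Delta_2\cap\nabla_2$, the functional $\ell$ is represented by a single $\varphi\in W^{1,\Phi}_0(\Omega)\cap L^\infty(\Omega)$ acting as $\ell(\nu)=\int_\Omega\widetilde\varphi\,d\nu$ through its $\capP$-quasi-continuous representative $\widetilde\varphi$. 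The condition $\ell|_{L^1(\Omega)}=0$ forces $\varphi=0$ Lebesgue-almost everywhere, hence $\widetilde\varphi=0$ $\capP$-quasi-everywhere by uniqueness of quasi-continuous representatives modulo $\capP$-null sets. Since $\mu\in\MP(\Omega)$ does not charge sets of zero $\capP$-capacity, we obtain $\ell(\mu)=\int_\Omega\widetilde\varphi\,d\mu=0$, the desired contradiction.

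The main obstacle is the representation step: turning an abstract separating functional on a quotient of distributions into an honest pairing against the quasi-continuous representative of a Sobolev function. This requires the full anisotropic Orlicz--Sobolev quasi-continuous theory developed in the sections preceding Lemma~\ref{lem:basic-decomp}, the uniqueness of such representatives up to $\capP$-quasi-everywhere equality, and the reflexivity secured by the hypothesis $\Phi\in\Delta_2\cap\nabla_2$; without any one of these, the argument breaks down.
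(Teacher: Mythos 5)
Your forward implication is in the right spirit and parallels the paper's Lemma~\ref{lem:cap0}: test the distributional identity against capacity-minimizing functions and pass to the limit. The handling of signed measures via Hahn--Jordan and Fatou is sketchy (the paper instead estimates $|\mu(K)|\le C\,\CapP(K,\Omega')$ over compacts and takes a supremum), but this is the easy direction and the idea is sound.

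The converse is where your argument has a real gap. Your Hahn--Banach separation requires two things that are left unestablished and that, on inspection, carry essentially all the difficulty. First, you need a normed ambient space $X$ that contains both the quotient Banach space $F=L^1(\Omega)+(W_0^{1,\Phi}(\Omega))'$ and the measure $\mu$, in which $F$ is \emph{norm-closed}; completeness of $F$ under its own quotient norm does not give closedness of $F$ inside a larger $X$ unless the embedding has closed range, and no such $X$ is exhibited. Second, and more seriously, the conclusion $\ell(\mu)=\int_\Omega\widetilde\varphi\,d\mu$ presupposes that the separating functional $\ell\in X^*$ acts on the measure part of $X$ as integration against a quasi-continuous Orlicz--Sobolev function. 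But dualising the two summands of $F$ only yields a representation of $\ell|_F$: you get $\psi\in L^\infty(\Omega)$ and $\varphi\in W_0^{1,\Phi}(\Omega)$ with $\psi=\varphi$ a.e.\ on the overlap, and since $\ell|_{L^1}=0$ this just forces $\varphi=0$ and reconfirms $\ell|_F=0$. Nothing identifies the value of $\ell$ at the external point $\mu$ with $\int\widetilde\varphi\,d\mu$; an abstract element of $X^*$ restricted to $\Mb(\Omega)$ need not be given by integration against a pointwise-defined function at all. So the decisive step — representing a functional on the ambient space by a quasi-continuous Sobolev function — is exactly what the theorem amounts to, and is unproved.

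The paper's proof avoids this circularity by a direct construction. For nonnegative $\mup\in\MP(\Omega)$ it studies the convex l.s.c.\ functional $u\mapsto\int_\Omega u_+\,d\mup$ on the separable reflexive space $W_0^{1,\Phi}(\Omega)$, writes it as a countable supremum of continuous affine functions, and from the affine pieces builds a positive $\gam\in(W_0^{1,\Phi}(\Omega))'\cap\Mb(\Omega)$ and $h\in L^1(\Omega,\gam)$ with $d\mup=h\,d\gam$. Truncations of $h$ over a compact exhaustion produce a series $\mup=\sum_i\mu_i$ with each $\mu_i\in(W_0^{1,\Phi}(\Omega))'\cap\Mb(\Omega)$, and Lemma~\ref{lem:dual-approx} splits each $\mu_i$ into a smooth $L^1$ piece plus a small remainder in $(W_0^{1,\Phi}(\Omega))'$; summing and invoking Lemma~\ref{lem:distr} gives the asserted $f+\dv G$. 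This Boccardo--Gallou\"et--Orsina-type construction is what actually does the work that your separation step leaves to faith.
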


\begin{remark}If $\Phi$ grows so fast that \eqref{slow-growth-1} is violated (i.e. when it satisfies condition generalizing $p>n$ for $\Phi_p(\xi)=(\sum_{i=1}^n|\xi_i|^2)^\frac{p}{2}$ generating $W^{1,p}$), then points have positive capacity. Consequently, all bounded measures are $\Phi$-diffuse.
\end{remark}

\begin{remark}The decomposition of Theorem~\ref{theo:decomp} cannot be unique as $L^1(\Omega)\cap(W_0^{1,\Phi}(\Omega))'\neq\{0\}.$ 
\end{remark}

\begin{corollary}\label{coro:decomp}
 For every $\mu\in\Mb(\Omega)$ there exists a decomposition 
\[\mu=\mu_\Phi +\mu_s^+-\mu_s^-\]
with some $\mu_\Phi$ which does not charge sets of anisotropic capacity zero, disjoint sets $E_-$ and $E_+$ of anisotropic capacity zero and nonnegative measures $\mu_s^-,\mu_s^+\in\Mb(\Omega)$ concentrated on $E_-$ and $E_+$, respectively, see Lemma~\ref{lem:basic-decomp}. Thus, any $\mu\in\Mb(\Omega)$ admitts a~decomposition 
\[\mu=f+\dv G +\mu_s^+-\mu_s^-\]
with $f,G$ as in Theorem~\ref{theo:decomp}.
\end{corollary}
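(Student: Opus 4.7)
The plan is to bootstrap from the two main ingredients already in place: Lemma~\ref{lem:basic-decomp}, which handles the abstract decomposition of a nonnegative bounded measure with respect to the capacity $\capP$, and Theorem~\ref{theo:decomp}, which identifies $\Phi$-diffuse measures with distributions of the form $f + \dv G$.

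First, I would apply the Jordan decomposition to write $\mu = \mu^+ - \mu^-$ with $\mu^\pm$ nonnegative, mutually singular, and bounded. Applying Lemma~\ref{lem:basic-decomp} to each of $\mu^\pm$ produces a splitting $\mu^\pm = (\mu^\pm)_\Phi + \nu^\pm$ into a $\Phi$-diffuse part and a nonnegative measure $\nu^\pm$ concentrated on a Borel set $F^\pm \subset \Omega$ with $\capP(F^\pm) = 0$. Then I would set
\[\mu_\Phi := (\mu^+)_\Phi - (\mu^-)_\Phi, \qquad \mu_s^+ := \nu^+, \qquad \mu_s^- := \nu^-.\]
The signed measure $\mu_\Phi$ lies in $\MP(\Omega)$ because the property of vanishing on sets of $\capP$-capacity zero is preserved under finite linear combinations (if $\capP(N) = 0$ then $(\mu^+)_\Phi(N) = (\mu^-)_\Phi(N) = 0$), and clearly $\mu = \mu_\Phi + \mu_s^+ - \mu_s^-$.

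To upgrade to the disjointness of the supports of $\mu_s^\pm$ required by the statement, I would invoke the mutual singularity of $\mu^+, \mu^-$: one can pick disjoint Borel sets $A^+, A^-$ carrying $\mu^+, \mu^-$ respectively, and then replace $F^\pm$ by $E_\pm := F^\pm \cap A^\pm$. These sets remain of capacity zero by monotonicity of $\capP$ and still carry $\mu_s^\pm$, while being disjoint. This yields the first decomposition in the statement. For the second, Theorem~\ref{theo:decomp} applied to $\mu_\Phi \in \MP(\Omega)$ produces $f \in L^1(\Omega)$ and $G \in L^{\wt\Phi}(\Omega; \rn)$ such that $\mu_\Phi = f + \dv G$ in the sense of distributions; substituting yields the final form $\mu = f + \dv G + \mu_s^+ - \mu_s^-$.

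The assertion is essentially a bookkeeping consequence of the two earlier results, so I do not anticipate a substantial obstacle. The only subtlety is that Theorem~\ref{theo:decomp} is stated for bounded measures in $\MP(\Omega)$, so one must verify that $\mu_\Phi$, being a difference of two nonnegative $\Phi$-diffuse bounded measures, remains in $\MP(\Omega)$; this has already been observed above. All remaining steps are routine measure-theoretic manipulations.
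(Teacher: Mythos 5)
Your proposal is correct and follows essentially the route the paper intends: the paper gives no explicit proof of the corollary, simply pointing to Lemma~\ref{lem:basic-decomp} for the $\capP$-absolutely-continuous/singular splitting and to Theorem~\ref{theo:decomp} for the $f+\dv G$ representation, and you fill in exactly those steps. The only (minor, harmless) stylistic difference is the order of operations: you Jordan-decompose $\mu$ first and then apply Lemma~\ref{lem:basic-decomp} to each nonnegative part, whereas the paper's phrasing suggests applying the lemma once to the signed $\mu$ and Jordan-decomposing the resulting singular part; your order is arguably cleaner since the lemma's construction (taking a supremum of $\mu(D)$ over capacity-null sets $D$) is most transparent for nonnegative measures, and you should perhaps make explicit that $\nu^\pm=\mu^\pm\mathds{1}_{N^\pm}\leq\mu^\pm$ is what guarantees $\nu^\pm$ is carried by $A^\pm$ when you intersect $F^\pm$ with $A^\pm$.
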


Let us comment the result starting with presenting isotropic consequences. The Orlicz part was not known before, the power growth part is classical and retrieved in detail. 
\begin{corollary} Suppose $A:[0,\infty)\to[0,\infty)$ is a doubling Young function. Then  $\mu_A\in \Mb(\Omega)$ does not charge the sets of Sobolev $A$-capacity zero if and only if $\mu_A\in L^1(\Omega)+(W_0^{1,A}(\Omega))'$, i.e. there exist $f\in L^1(\Omega)$ and $G \in (L^{\wt{A}}(\Omega))^n$, such that $\mu_A=f+\dv G$ in the sense of distributions. In particular, the special case of this result is the classical measure characterization~\cite{BGO,DaM,DMMOP1}:   $\mu_p\in \Mb(\Omega)$ does not charge the sets of the classical Sobolev $p$-capacity zero if and only if $\mu_p\in L^1(\Omega)+W^{-1,p'}(\Omega)$, i.e. there exist $f\in L^1(\Omega)$ and $G \in (L^{p'}(\Omega))^n$, such that $\mu_p=f+\dv G$ in the sense of distributions.
\end{corollary}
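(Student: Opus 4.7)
The plan is to derive this corollary as a direct specialization of Theorem~\ref{theo:decomp} to the isotropic setting. I would set $\Phi(\xi):=A(|\xi|)$ for $\xi\in\rn$. Since $A$ is a Young function with $A(0)=0$ that is strictly increasing past the origin (as a doubling Young function that is not identically zero), the radial function $\Phi$ is convex, even, finite-valued, vanishes only at $0$, and has $\lim_{|\xi|\to 0}\Phi(\xi)/|\xi|=0$, $\lim_{|\xi|\to\infty}\Phi(\xi)/|\xi|=\infty$. Hence $\Phi$ is a fully anisotropic $n$-dimensional $N$-function in the sense defined earlier, and its sublevel sets $\{\Phi\leq t\}$ are Euclidean balls, trivially compact neighbourhoods of the origin.

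Next I would check that the hypothesis $\Phi\in\Delta_2\cap\nabla_2$ required by Theorem~\ref{theo:decomp} reduces to the stated doubling of $A$ together with the tacit $\nabla_2$ condition coming from the assertion that the dual space $(W_0^{1,A}(\Omega))'$ serves as the target; the $\Delta_2$-condition for $\Phi$ is equivalent to the doubling of $A$, since $\Phi(2\xi)=A(2|\xi|)\leq C A(|\xi|)=C\Phi(\xi)$, and the $\nabla_2$-condition for $\Phi$ is equivalent to $\Delta_2$ for $\wt A$, which is precisely what guarantees the reflexivity of $W_0^{1,A}(\Omega)$ used implicitly on the right-hand side.

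Once the setting is aligned, I would identify all the objects appearing in Theorem~\ref{theo:decomp} with their classical isotropic counterparts. Because $\Phi$ is radial, the Orlicz space $L^\Phi(\Omega)$ coincides with the classical Orlicz space $L^A(\Omega)$, the Young conjugate satisfies $\wt\Phi(\eta)=\wt A(|\eta|)$, so $L^{\wt\Phi}(\Omega;\rn)=(L^{\wt A}(\Omega))^n$, and the Orlicz--Sobolev space $W_0^{1,\Phi}(\Omega)$ is $W_0^{1,A}(\Omega)$. Consequently the anisotropic capacity $\capP$ defined in Section~\ref{ssec:sob-cap} is the standard Sobolev $A$-capacity, and membership in $\MP(\Omega)$ becomes the statement that $\mu_A$ does not charge sets of Sobolev $A$-capacity zero. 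Applying Theorem~\ref{theo:decomp} in this notation yields the first equivalence.

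Finally, for the $p$-case, I would pick $A(t)=t^p$ with $1<p<\infty$, which is doubling, with $\wt A(t)=c_p t^{p'}$ also doubling; then $L^{\wt A}(\Omega)=L^{p'}(\Omega)$, the dual of $W_0^{1,p}(\Omega)$ is $W^{-1,p'}(\Omega)$, and the decomposition $\mu_p=f+\dv G$ with $f\in L^1(\Omega)$ and $G\in (L^{p'}(\Omega))^n$ is exactly the classical representation, recovering \cite{BGO,DaM,DMMOP1}. There is no genuine obstacle here; the only point requiring some care is the identification of the radial anisotropic capacity with the classical Sobolev $A$-capacity, which is straightforward once the definitions from Section~\ref{ssec:sob-cap} are in hand.
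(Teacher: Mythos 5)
Your proof is correct and is exactly the intended specialization of Theorem~\ref{theo:decomp}: set $\Phi(\xi)=A(|\xi|)$, identify $L^{\wt\Phi}(\Omega;\rn)$ with $(L^{\wt A}(\Omega))^n$ and $\capP$ with the Sobolev $A$-capacity, and apply the theorem (the paper gives no separate proof for exactly this reason). One small point on conventions: in this paper ``doubling'' already means $A\in\Delta_2\cap\nabla_2$ near infinity (see Section~\ref{ssec:notation}), so the $\nabla_2$ condition on $\Phi$ follows directly from the hypothesis on $A$ rather than being a tacit assumption imported from the appearance of $(W_0^{1,A}(\Omega))'$ in the conclusion.
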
 

 To prove Theorem~\ref{theo:decomp}, we develop the framework of capacities in this unconventional space setting. In particular, Proposition~\ref{prop:quasicont} yields that any fully anisotropic Orlicz-Sobolev function has a representative, which is quasicontinuous with respect to properly defined anisotropic capacity. This part is related to earlier studies in the isotropic setting~\cite{MSZ,bie}, in metric measure spaces~\cite{BO,Tuo}, as well as within generalized Orlicz framework~\cite{bahaha}.

\subsection{Conclusion for measure data problems}\label{ssec:con-meas-data-probs}

Proposition~\ref{prop:ex-sola} together with Theorems~\ref{theo:uniq} and~\ref{theo:decomp} have the following direct consequence in the reflexive case for problems with $\Phi$-diffuse measures.  Note that in the reflexive case approximable solutions coincide with the distributional ones and, if the datum allows, with weak ones.
\begin{theorem}[Uniqueness II]
\label{theo:uniq-refl}
If $\Omega$ is a bounded Lipschitz domain in $\rn$, $\opA$ satisfies~\eqref{mon} and \eqref{constants}  with a fully anisotropic $n$-dimensional $N$-function $\Phi$, which satisfies $\Delta_2\cap\nabla_2$-conditions near infinity, $\mu\in \MP(\Omega)$, then there exists a unique approximable solution to~\eqref{eq:intro}.
\end{theorem}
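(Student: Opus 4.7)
The plan is to chain three results already at hand: Proposition~\ref{prop:ex-sola} for existence, Theorem~\ref{theo:decomp} to decompose $\mu$, and Theorem~\ref{theo:uniq} for uniqueness once such a decomposition is available. Since $\mu \in \MP(\Omega) \subseteq \Mb(\Omega)$, the existence of at least one approximable solution $u \in \mathcal{T}^1_0 L^\Phi(\Omega)$ to~\eqref{eq:intro} is immediate from Proposition~\ref{prop:ex-sola}; all the substantive work lies in showing that this $u$ is independent of the choice of approximating sequence.

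For uniqueness I would first invoke the hypothesis $\Phi\in\Delta_2\cap\nabla_2$ near infinity, which is precisely what places us in the scope of Theorem~\ref{theo:decomp}. Applied to $\mu\in\MP(\Omega)$, that theorem produces $f \in L^1(\Omega)$ and $G \in L^{\wt\Phi}(\Omega;\rn)$ with $\mu = f + \dv G$ in $\mathcal{D}'(\Omega)$. I would then construct a concrete admissible approximation for Theorem~\ref{theo:uniq} by mollification: smooth $f_s \to f$ strongly in $L^1(\Omega)$ and smooth $G_s \to G$ modularly in $L^{\wt\Phi}(\Omega;\rn)$, the latter being the classical consequence of $\wt\Phi \in \Delta_2$ (equivalent to $\Phi \in \nabla_2$) which guarantees modular density of smooth functions. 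Setting $h_s := f_s + \dv G_s \in C_0^\infty(\Omega)$ gives $h_s \to \mu$ weakly-$\ast$ in the space of measures, so Theorem~\ref{theo:uniq} delivers a uniquely determined a.e.\ limit of the corresponding weak solutions of~\eqref{prob:trunc}, which must coincide with the $u$ furnished by Proposition~\ref{prop:ex-sola}.

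The main obstacle, and the only content beyond routine concatenation, is that Theorem~\ref{theo:uniq} only treats approximating sequences of the specific decomposed form $h_s = f_s + \dv G_s$, whereas the definition of approximable solution admits \emph{any} $\{h_s'\} \subset C_0^\infty(\Omega)$ with $h_s' \to \mu$ weakly-$\ast$. To close this gap I would invoke the reflexive-case identification of approximable with distributional solutions, recalled in Subsection~\ref{ssec:con-meas-data-probs}: given an arbitrary $\{h_s'\}$, the a priori estimates underlying Proposition~\ref{prop:ex-sola} yield a subsequence with $u_s' \to u'$ a.e.; testing the equation for $u_s - u_s'$ against $T_k(u_s - u_s')$ and exploiting the monotonicity~\eqref{mon} together with $h_s - h_s' \to 0$ weakly-$\ast$ in measures, I would conclude $T_k(u_s - u_s') \to 0$ and hence $u' = u$. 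The reflexivity hypothesis is decisive twice: to make Theorem~\ref{theo:decomp} available and to secure the modular density of smooth functions used in the explicit approximation.
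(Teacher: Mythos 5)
Your overall route — existence from Proposition~\ref{prop:ex-sola}, then $\mu = f + \dv G$ from Theorem~\ref{theo:decomp} under $\Phi \in \Delta_2\cap\nabla_2$, then Theorem~\ref{theo:uniq} applied to mollified $f_s$ and $G_s$ — is exactly the chain the paper uses; the text in Subsection~\ref{ssec:con-meas-data-probs} and in the ``Organization'' paragraph states that Theorem~\ref{theo:uniq-refl} is precisely this ``direct corollary''. The first three paragraphs of your proposal are sound and match the paper.

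Your last paragraph, however, does not hold up. You are right to flag that Theorem~\ref{theo:uniq} establishes uniqueness only among approximating sequences that come pre-decomposed as $h_s = f_s + \dv G_s$ with $f_s \to f$ strongly in $L^1$ and $G_s \to G$ modularly, whereas the definition of approximable solution admits arbitrary $\{h_s'\}\subset C_0^\infty(\Omega)$ converging weakly-$\ast$. But the argument you propose to bridge this gap — testing the difference equation against $T_k(u_s - u_s')$ and invoking $h_s - h_s' \to 0$ weakly-$\ast$ in measures — fails at exactly the step it is meant to rescue. Weak-$\ast$ convergence gives $\int\varphi\,d(h_s - h_s') \to 0$ only for a \emph{fixed} $\varphi \in C_0(\Omega)$; the test functions $T_k(u_s - u_s')$ are $s$-dependent, merely bounded (not continuous, not in $C_0(\Omega)$), and do not converge uniformly, so there is no way to pass to the limit in $\int T_k(u_s - u_s')\,(h_s - h_s')\,dx$. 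This is precisely why the proof of Theorem~\ref{theo:uniq} had to \emph{assume} strong $L^1$ convergence of $f_s$ and modular convergence of $G_s$ — it split the pairing into a piece controlled by strong $L^1$ convergence against a bounded test and a piece controlled by modular convergence via Proposition~\ref{prop:conv:mod-weak} and the bound~\eqref{bdd} — rather than working with mere weak-$\ast$ convergence of $h_s$. Your argument would render that hypothesis superfluous, which should itself be a warning sign.

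The paper does not attempt the closure you sketch; it simply records the remark that in the reflexive setting approximable solutions coincide with distributional (or, when the datum permits, weak) solutions, and otherwise reads ``uniqueness'' in Theorem~\ref{theo:uniq-refl} in the same sense as in Theorem~\ref{theo:uniq}, i.e. with respect to the decomposed approximating sequences. If you want to genuinely upgrade the uniqueness claim to arbitrary weak-$\ast$ approximating sequences, you would need the machinery behind that remark — compactness of $\{u_s'\}$ in the reflexive $W^{1,\Phi}_0$, identification of the limit as a renormalized/distributional solution, and then an intrinsic uniqueness theorem for that notion with $\Phi$-diffuse data — none of which follows from the monotonicity-plus-weak-$\ast$ argument you propose.
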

 The above result is particularly meaningful if $\Phi$ is growing so slow that~\eqref{slow-growth-1} holds. Otherwise, however, is not excluded in the statement as all measures are $\Phi$-diffuse and belong to $(W_0^{1,\Phi}(\Omega))'$. Consequently, then approximable solutions are weak and unique.

\subsection{Organization of the paper. } Section~\ref{sec:plane} is devoted to the method of obtaining examples of essentially fully anisotropic Young function on the plane (Example~\ref{ex:ess-fully-aniso}). Section~\ref{sec:prelim} introduces to functional framework necessary to carry analysis of measure data problems. In Section~\ref{sec:uniq} we prove uniqueness (Theorem~\ref{theo:uniq}), while basic analysis on anisotropic capacities together with the proof of measure decomposition (Theorem~\ref{theo:decomp}) are provided  in Section~\ref{sec:cap}. Theorem~\ref{theo:uniq-refl} follows as a direct corollary of these results.
 
\section{Example of essentially fully anisotropic Young function on the plane}\label{sec:plane}
To define essential full anisotropy we need to introduce the equivalence classes of functions.  Let $F,G:\rn \to \R$. We say that $F$ \emph{dominates} $G$ ($F \gtrsim G$ for short) if
there exist constants $c_1,d_1>0$ such that
\[
	c_1\, G(d_1 x) \leq F(x), \qquad x \in \rn.
\]We say that $F,G:\rn \to \R$ are \emph{equivalent} ($F \simeq G$ for short) if they dominate each other, that is if
there exist constants $c_1,c_2,d_1,d_2>0$ such that
\begin{equation}\label{equiv}
	c_1 F(d_1 x) \leq G(x) \leq c_2 F(d_2 x), \qquad x \in \rn.
\end{equation}
We also say that $F,G$ are \emph{incomparable} 
\begin{equation}\label{incomp}
\text{if neither $\ F \gtrsim G\ $ nor $\ G \gtrsim F\ $ holds.}
\end{equation}

\begin{definition}\label{def:2}
We say that a Young function $\Phi:\rn \to [0,\infty]$ is \emph{essentially fully anisotropic} if there exists no linear invertible map $T:\R^n \to \R^n$ such that \[\Phi(T(x_1,\dots,x_n)) \simeq \sum_{i=1}^n \psi_i(|x_i|)\] for some Young functions $\psi_i:\rp0 \to [0,\infty]$, $i=1,\dots,n$.
\end{definition}

\noindent Let us start with some auxiliary facts and lemmas (the proofs are straightforward and thus we shall omit them).

\begin{fact}\label{fact:1}
Let $F,G,H:\R^n \to \R$. Then
\begin{itemize}
\item[(a)] If $F \simeq G$, then $G \simeq F$.
\item[(b)] If $F \simeq G$ and $G \simeq H$, then $F \simeq H$.
\end{itemize}
\end{fact}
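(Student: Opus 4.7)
The plan is to unwind the definition of equivalence in \eqref{equiv} and verify symmetry and transitivity by elementary substitutions of the argument and multiplication of constants. No analytic assumptions on $F, G, H$ are used beyond the fact that the inequalities in \eqref{equiv} are pointwise, so the argument is purely formal bookkeeping.

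For part (a), starting from $c_1 F(d_1 x) \leq G(x) \leq c_2 F(d_2 x)$ for all $x \in \R^n$, I would first substitute $x \mapsto x/d_2$ in the right inequality to obtain $G(x/d_2) \leq c_2 F(x)$, hence $F(x) \geq (1/c_2) G(x/d_2)$. Symmetrically, substituting $x \mapsto x/d_1$ in the left inequality gives $c_1 F(x) \leq G(x/d_1)$, hence $F(x) \leq (1/c_1) G(x/d_1)$. Combining the two displays yields exactly the two-sided bound required by \eqref{equiv} for $G \simeq F$, with constants $(1/c_2, 1/c_1, 1/d_2, 1/d_1)$.

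For part (b), I would chain the two equivalences. Write $c_1 F(d_1 x) \leq G(x) \leq c_2 F(d_2 x)$ for $F \simeq G$ and $c_1' G(d_1' x) \leq H(x) \leq c_2' G(d_2' x)$ for $G \simeq H$. Substituting $x \mapsto d_2' x$ in the upper bound of the first chain gives $G(d_2' x) \leq c_2 F(d_2 d_2' x)$, which plugged into the upper bound of the second chain produces $H(x) \leq c_2' c_2 F(d_2 d_2' x)$. Analogously, substituting $x \mapsto d_1' x$ in the lower bound of the first chain and inserting into the lower bound of the second chain yields $c_1' c_1 F(d_1 d_1' x) \leq H(x)$. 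Together these give $F \simeq H$ with constants $(c_1 c_1', c_2 c_2', d_1 d_1', d_2 d_2')$.

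The only point requiring any care is keeping track of which constant multiplies which argument after each substitution; this is pure algebra and presents no genuine obstacle. I expect this is precisely why the authors declared the proof straightforward and omitted it.
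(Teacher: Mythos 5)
Your proof is correct and is exactly the straightforward bookkeeping argument the authors have in mind when they omit it: both parts follow by substituting in the argument and multiplying constants, and your constant tracking checks out. Nothing more needs to be said.
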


\begin{fact}\label{fact:2}
Let $F_1,F_2,G_1,G_2:\rn \to \R$ are radially increasing. Then $F_1 \simeq G_1$ and $F_2 \simeq G_2$ implies $F_1+F_2 \simeq G_1 + G_2$.
\end{fact}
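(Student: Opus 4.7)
The plan is to unfold the definition of $\simeq$ for each pair and then recombine the resulting one-sided inequalities by exploiting that $F_1, F_2$ are radially increasing, which allows us to collapse two different dilation parameters into a single one.

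Concretely, from $F_1 \simeq G_1$ and $F_2 \simeq G_2$ I extract constants $c_1,c_2,d_1,d_2,c_3,c_4,d_3,d_4 > 0$ with
\[
c_1 F_1(d_1 x) \le G_1(x) \le c_2 F_1(d_2 x), \qquad c_3 F_2(d_3 x) \le G_2(x) \le c_4 F_2(d_4 x),
\]
for every $x \in \R^n$. Adding these two chains gives a two-sided bound on $G_1(x)+G_2(x)$, but with the inner variable scaled by possibly different constants in the $F_1$ and $F_2$ terms. To turn this into the required inequality for $F_1+F_2$, I set $D_1 := \min\{d_1,d_3\}$ and $D_2 := \max\{d_2,d_4\}$. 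Since each $F_i$ is radially increasing, the function $\lambda \mapsto F_i(\lambda x)$ is nondecreasing on $[0,\infty)$ for each fixed $x$; hence
\[
F_1(D_1 x) \le F_1(d_1 x), \quad F_2(D_1 x) \le F_2(d_3 x), \quad F_1(d_2 x) \le F_1(D_2 x), \quad F_2(d_4 x) \le F_2(D_2 x).
\]

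Combining these monotonicity estimates with the added inequalities yields
\[
\min\{c_1,c_3\}\bigl(F_1+F_2\bigr)(D_1 x) \le G_1(x)+G_2(x) \le \max\{c_2,c_4\}\bigl(F_1+F_2\bigr)(D_2 x),
\]
which is precisely $F_1+F_2 \simeq G_1+G_2$ with constants $C_1 = \min\{c_1,c_3\}$, $C_2 = \max\{c_2,c_4\}$, $D_1$, $D_2$ as above. There is no real obstacle here; the only subtle point is that one cannot simply take averages of the dilation parameters, and radial monotonicity is the correct tool to unify them — this is also why the hypothesis of radial monotonicity cannot be dropped from the statement.
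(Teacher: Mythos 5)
Your proof is correct and is the obvious intended argument, which the paper omits as straightforward: add the two-sided bounds and use radial monotonicity to unify the different dilation parameters by taking $\min$ on the lower side and $\max$ on the upper side. The one tacit step is passing from $c_1 F_1(d_1x)+c_3 F_2(d_3x)$ down to $\min\{c_1,c_3\}\bigl(F_1(d_1x)+F_2(d_3x)\bigr)$ (and the counterpart with $\max$), which needs $F_1,F_2\ge 0$; this is harmless in the paper's setting of Young-type functions, even though the statement nominally only assumes $\R$-valued and radially increasing.
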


\noindent From now on we focus on functions living on the plane. We have the following observation.

\begin{lemma}\label{lem:1}
Suppose $\Phi:\R^2 \to [0,\infty)$ and $f,g:\R \to [0,\infty)$ are even radially increasing functions such that $\Phi(x,y) \simeq f(x)+g(y)$, $f(0)=0$, and $g(0)=0$. Then $f(x) \simeq \Phi(x,0)$ and $g(y) \simeq \Phi(0,y)$. Moreover, $\Phi(x,y) \simeq \Phi(x,0)+\Phi(0,y)$.
\end{lemma}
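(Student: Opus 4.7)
The plan is to unpack the definition of equivalence in \eqref{equiv} and substitute along the two coordinate axes, using crucially that $f(0)=g(0)=0$. By hypothesis, there exist constants $c_1,c_2,d_1,d_2>0$ such that, for every $(x,y)\in\R^2$,
\[
c_1\bigl(f(d_1 x)+g(d_1 y)\bigr)\ \leq\ \Phi(x,y)\ \leq\ c_2\bigl(f(d_2 x)+g(d_2 y)\bigr).
\]
Setting $y=0$ and invoking $g(0)=0$ gives $c_1 f(d_1 x)\leq \Phi(x,0)\leq c_2 f(d_2 x)$, which is exactly $f(x)\simeq \Phi(x,0)$. Symmetrically, setting $x=0$ and using $f(0)=0$ yields $g(y)\simeq \Phi(0,y)$. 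This takes care of the first two claims.

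For the third equivalence $\Phi(x,y)\simeq \Phi(x,0)+\Phi(0,y)$, I would add the two axis inequalities coordinatewise. The lower bounds $c_1 f(d_1 x)\leq \Phi(x,0)$ and $c_1 g(d_1 y)\leq \Phi(0,y)$ sum to
\[
c_1\bigl(f(d_1 x)+g(d_1 y)\bigr)\ \leq\ \Phi(x,0)+\Phi(0,y),
\]
while the upper bounds give
\[
\Phi(x,0)+\Phi(0,y)\ \leq\ c_2\bigl(f(d_2 x)+g(d_2 y)\bigr).
\]
Hence $\Phi(x,0)+\Phi(0,y)\simeq f(x)+g(y)$. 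Combined with the given $\Phi(x,y)\simeq f(x)+g(y)$ and the transitivity of $\simeq$ (Fact~\ref{fact:1}(b)), we conclude $\Phi(x,y)\simeq \Phi(x,0)+\Phi(0,y)$.

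There is no real obstacle here: the whole argument is a direct evaluation of the equivalence along the axes, and the only place the hypotheses on $f,g$ enter is through $f(0)=g(0)=0$, which lets the off-axis term drop out when restricting. The assumption that $f,g$ be even and radially increasing is not needed for the lemma itself (it only serves to make the one-dimensional Young-function viewpoint meaningful), so one can afford to bypass Fact~\ref{fact:2} entirely and argue by direct comparison as above.
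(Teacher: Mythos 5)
Your proof is correct and the core strategy (restrict the equivalence to the two coordinate axes, using $f(0)=g(0)=0$ to drop the off-axis term, then recombine) is the same as the paper's. The one genuine difference is in the recombination step: you carry it out by hand, reusing the \emph{same} constants $c_1,c_2,d_1,d_2$ from the original two-variable inequality on both axes, whereas the paper invokes Fact~\ref{fact:2}. Your variant is slightly sharper. Fact~\ref{fact:2} hypothesizes radial monotonicity precisely so it can merge two equivalences that come with \emph{a priori different} dilation constants; since both of your one-variable equivalences are restrictions of a single two-variable inequality, the constants match automatically and no monotonicity is needed. This makes your closing observation correct: the even/radially-increasing hypothesis on $f,g$ is not actually used in this lemma (it only becomes relevant when Fact~\ref{fact:2} is invoked as a black box), so the direct constant-tracking argument is the more economical one.
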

\begin{proof}
By taking $x=y=0$ we get $g(0)=0$. Take $y=0$ to get $f(x) \simeq \Phi(x,0)$ and $x=0$ to get $g(y) \simeq \Phi(0,y)$. Thus from Fact \ref{fact:1} and  \ref{fact:2} we get $\Phi(x,y) \simeq f(x)+g(y) \simeq \Phi(x,0)+\Phi(0,y)$. 
\end{proof}

\noindent The following proposition ensures that the function we construct is indeed fully anisotropic. 
 
\begin{proposition}\label{thm:1}
Suppose $\phi_1,\phi_2,\phi_3:\R \to [0,\infty)$ are $1$-dimensional Young functions such that $\phi_i$ is incomparable with $\phi_j+\phi_k$ for any distinct $1 \leq i,j,k \leq 3$. Then a Young function \begin{equation}
\label{eq:Phi}
\Phi(x,y)=\phi_1(x)+\phi_2(y) + \phi_3(x-y)
\end{equation} is essentially fully anisotropic  in the sense of Definition~\ref{def:2}.
\end{proposition}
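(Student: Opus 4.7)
The plan is to argue by contradiction. Suppose there exist an invertible linear map $T:\R^2\to\R^2$ and $1$-dimensional Young functions $\psi_1,\psi_2$ such that $\Phi(T(x,y))\simeq \psi_1(|x|)+\psi_2(|y|)$. Setting $\Psi(x,y):=\Phi(T(x,y))$, the hypotheses of Lemma~\ref{lem:1} are met by $f(x):=\psi_1(|x|)$ and $g(y):=\psi_2(|y|)$ (both are even, radially increasing, and vanish at $0$), so that $\Psi(x,y)\simeq \Psi(x,0)+\Psi(0,y)$. Introducing $v_1:=T(1,0)$ and $v_2:=T(0,1)$, which are linearly independent since $T$ is invertible, this becomes
\begin{equation}\label{eq:starplan}
 \Phi(xv_1+yv_2)\simeq \Phi(xv_1)+\Phi(yv_2), \qquad (x,y)\in\R^2.
\end{equation}

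Next I would exploit the three distinguished directions $e_1=(1,0)$, $e_2=(0,1)$ and $e_3=(1,1)$ of $\Phi$, along which exactly one of the summands in \eqref{eq:Phi} drops out:
\[
 \Phi(te_1)=\phi_1(t)+\phi_3(t),\quad \Phi(te_2)=\phi_2(t)+\phi_3(t),\quad \Phi(te_3)=\phi_1(t)+\phi_2(t).
\]
Because $v_1,v_2$ are linearly independent while $e_1,e_2,e_3$ span three pairwise distinct lines through $0$, each $v_i$ is parallel to at most one $e_k$, so at least one of the $e_k$'s, say $e_k$, is parallel to neither $v_1$ nor $v_2$. Writing this $e_k=\alpha v_1+\beta v_2$ then forces $\alpha\beta\neq 0$. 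Substituting $(x,y)=(t\alpha,t\beta)$ in \eqref{eq:starplan} yields, as functions of $t\in\R$, the one-variable equivalence $\Phi(te_k)\simeq \Phi(t\alpha v_1)+\Phi(t\beta v_2)$.

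Finally, writing $v_i=(p_i,q_i)$, one expands $\Phi(tv_i)=\phi_1(tp_i)+\phi_2(tq_i)+\phi_3(t(p_i-q_i))$. Consider the case $k=3$: the left-hand side $\Phi(te_3)=\phi_1(t)+\phi_2(t)$ has no $\phi_3$ summand, so the lower bound of the above equivalence yields
\[
 c\bigl[\phi_3(dt\alpha(p_1-q_1))+\phi_3(dt\beta(p_2-q_2))\bigr]\leq \phi_1(t)+\phi_2(t)
\]
for some $c,d>0$. Since $e_3$ is parallel to neither $v_i$, we have $p_i\neq q_i$ for both $i$, so $\gamma:=d\alpha(p_1-q_1)\neq 0$. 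A linear rescaling $s=t\gamma$, absorbed into the equivalence constants, then delivers $\phi_3\lesssim \phi_1+\phi_2$, contradicting the assumed incomparability of $\phi_3$ with $\phi_1+\phi_2$. The cases $k=1$ and $k=2$ are entirely analogous and yield $\phi_2\lesssim\phi_1+\phi_3$ and $\phi_1\lesssim\phi_2+\phi_3$, respectively. The only mild subtlety is that a single linear rescaling of the argument of a Young function preserves its equivalence class, which is immediate from the definition of $\simeq$; hence no real obstacle arises.
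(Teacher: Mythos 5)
Your argument is correct and follows essentially the same strategy as the paper's proof: contradict via Lemma~\ref{lem:1} to reach $\Phi(T(x,y))\simeq\Phi(T(x,0))+\Phi(T(0,y))$, then restrict along a direction where one of the $\phi_i$ summands vanishes on the left while surviving (with nonzero argument) on the right, contradicting the assumed incomparability. Your pigeonhole reformulation --- the three special directions $e_1,e_2,e_3$ cannot all be covered by the two image directions $v_1,v_2$ --- packages the paper's explicit case analysis on the matrix entries $a,b,c,d$ (the cases $a,b\neq 0$; $c,d\neq 0$; and the residual $b=c=0$) into a single cleaner statement, but the underlying mechanism is identical.
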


\begin{proof}%[Proof of Theorem \ref{thm:1}]
Assume by contradiction that there exist real $a,b,c,d$ such that the matrix
\[
	T = \left[ \begin{array}{cc}
	a & b \\ c & d
\end{array}	 \right]
\]
is invertible and such that $\Phi(T(x,y)) \simeq f(x)+g(y)$ for some even convex $f,g:\R \to [0,\infty)$. From Lemma \ref{lem:1} we get that $\Phi(T(x,y)) \simeq \Phi(x,0)+\Phi(0,y)$. Since \[T(x,y)=(ax+by,cx+dy),\] this gives
\begin{align*}
	 & \phi_1(ax+by)  + \phi_2(cx+dy) + \phi_3((a-c)x + (b-d)y) \\
	 & \qquad \qquad \simeq  \qquad  \phi_1(ax)  + \phi_2(cx) + \phi_3((a-c)x) + \phi_1(by)  + \phi_2(dy) + \phi_3((b-d)y).
\end{align*}
Assume $a \ne 0$ and $b \ne 0$. Then restricting to the line $y=-(a/b)x$ we get
\begin{align*}
	 &  \phi_2\left( \left( c - \tfrac{da}{b} \right) x \right) + \phi_3\left(\left( (a-c) - (b-d) \tfrac{a}{b} \right) x \right) \\
	 & \qquad \qquad \simeq  \qquad  \phi_1(ax)  + \phi_2(cx) + \phi_3((a-c)x) + \phi_1(ax)  + \phi_2\left( \tfrac{da}{b} \right) + \phi_3\left((b-d) \tfrac{a}{b} x \right).
\end{align*}
Note that $c-\frac{da}{b} \ne 0$ and $(a-c)-(b-d)\frac{a}{b} \ne 0$ since $\det(T) \ne 0$.
In particular $\phi_2 + \phi_3 \gtrsim \phi_1$, which is the desired contradiction. Playing the same game with $c$ and $d$ we get that $\phi_1 + \phi_3 \gtrsim \phi_2$ under $c \ne 0$ and $d \ne 0$. 

Assume now that $ab=0$ or $cd=0$. By exchanging the roles of $x$ and $y$ we can assume that $b=0$ and $a \ne 0$ (if $a=b=0$ then $T$ is not invertible). We can also assume that $c=0$ and $d \ne 0$ (if $d=0$ then $T$ is again not invertible). Using $b=c=0$ we get  
\[
	  \phi_1(ax)  + \phi_2(dy) + \phi_3(ax  -dy) \quad \simeq  \quad  \phi_1(ax)  + \phi_3(ax)   + \phi_2(dy) + \phi_3(dy).
\]
Now, taking $y=(a/d)x$ yields $\phi_1 + \phi_2 \gtrsim \phi_3$, which again is a contradiction.
\end{proof}

\noindent We remark that in order to be essentially fully anisotropic it is not enough for a function to have a form~\eqref{eq:Phi}. 

\begin{remark}\label{rem:powers} Function $\Phi:\R^2\to\rp0$ given by $\Phi(x,y)=|x|^p+|x-y|^q+|y|^r$ for abitrary $p,q,r>0$ is not fully anisotropic in the sense of Definition~\ref{def:2}.
 
Indeed, changing variables if necessary we can assume that $p \leq q \leq r$. Clearly, 
\[\Phi(x,y) \lesssim |x|^p + |y|^r + |x|^q + |y|^q.\]
We shall show the opposite inequality. \\

\noindent {\it Case i)}  $\tfrac 13\leq \tfrac {|x|}{|y|}\leq 3$. 
 In this case $|x|^q\lesssim |x|^p + |x|^r$ (since $p \leq q \leq r$). Consequently, \begin{flalign*}
\Phi(x, y) &= |x|^p + |x - y|^q + |y|^r \geq |x|^p + |y|^r\gtrsim 
|x|^p + |x|^r\\
& \gtrsim 
|x|^p + |x|^r + |x|^q \gtrsim 
|x|^p + |y|^r + |x|^q + |y|^q.\end{flalign*}

\noindent {\it Case ii)} $|x| \geq 3|y|$ or $|y| \geq 3|x|$. In both cases $|x - y| \geq \tfrac 12 |x| + \tfrac 12 |y|$. Let us check the first case
(the other one we get by exchanging $x$ and $y$). We have
\[|x - y| \geq |x| - |y| = \tfrac 12|x| +\tfrac 12 |x| - |y| \geq \tfrac 12|x| + \tfrac 32|y| - |y| =\tfrac 12( |x| + |y|).\]
Thus, in this case
\begin{flalign*}\Phi(x, y) &= |x|^p + |x - y|^q + |y|^r\gtrsim
|x|^p + (|x| + |y|)^q + |y|^r\\
&\gtrsim
|x|^p + |y|^r + |x|^q + |y|^q.\end{flalign*}
\end{remark}

We are in position to construct an essentially fully anisotropic Young function in fact controlling also its growth.
\begin{example}\label{ex:ess-fully-aniso}%{lem:good_triple}
Let $p \geq 1, \alpha > 0$ and let $\phi_-(t)=|t|^p$ and $\phi_+(t)=|t|^p \log^\alpha(|t|+1)$. There exist  $\phi_1,\phi_2,\phi_3:\R \to [0,\infty)$ being $1$-dimensional Young functions such that $\phi_i$ is incomparable with $\phi_j+\phi_k$ for any distinct $1 \leq i,j,k \leq 3$ and 
\[ \phi_- = \min(\phi_1, \phi_2, \phi_3) \leq \phi_1, \phi_2, \phi_3\leq \max(\phi_1, \phi_2, \phi_3)=\phi_+.
\]
 Then $\Phi:\R^2\to\rp0$ given by $\Phi(x,y)=\phi_1(x)+\phi_2(y) + \phi_3(x-y)$  is an essentially fully anisotropic Young function  in the sense of Definition~\ref{def:2}.  

In order to construct it, we observe first that obviously the function $\phi_+$ is convex. %To see this we compute the second derivative of this function and observe that the condition $\phi_+'' \geq 0$ reduces to
%\[	a t (2 p (t+1)-t) \log (t+1)+(a-1) a t^2+(p-1) p (t+1)^2 \log ^2(t+1) \geq 0.\]
%Here the worst case scenario is clearly $p=1$, which gives the condition $(a-1) t+(t+2) \log (t+1) \geq 0$. Here is it enough to consider $a=0$, which gives the inequality $(t+2) \log (t+1) \geq t$. This is easily seen to be true as
%\[	(t+2) \log (t+1) \geq (t+2) \frac{t}{t+1} \geq t.\]
We shall construct $\phi_i$ inductively. 
We set $t_0=1$ and $\phi_1=\phi_2=\phi_-$ and $\phi_3=\phi_+$ on $[0,t_0]$. Suppose our functions are already defined on $[0,t_k]$ with some $t_k \geq k$ and that at $t=t_k$ one of these functions is equal to $\phi_+$ and the other two coincide with $\phi_-$. We shall define $\phi_1, \phi_2, \phi_3$ on $[t_k, t_{k+1}]$ where $t_{k+1} \geq k+1$. Let us assume that $\phi_3(t_k)=\phi_+(t_k)$ (if $\phi_2(t_k)=\phi_+(t_k)$ in what follows we substitute $(\phi_1, \phi_2, \phi_3) \to (\phi_3, \phi_1, \phi_2)$ and if $\phi_1(t_k)=\phi_+(t_k)$ then we substitute $(\phi_1, \phi_2, \phi_3) \to (\phi_2, \phi_3, \phi_1)$). Let $l_k$ be the linear function whose graph passes through $(t_k, \phi_-(t_k))$ and is tangent to the graph of $\phi_+$ (say, in point $(h_k,\phi_+(h_k))$ with $h_k>t_k$). We set $\phi_2=l_k$ and $\phi_3=\phi_+$ on $[t_k,h_k]$. Let $s_k>h_k$ be the point satisfying $l_k(s_k)=\phi_-(s_k)$. We set $\phi_3=l_k$ and $\phi_2=\phi_+$ on $[h_k,s_k]$. We also take $\phi_1=\phi_-$ on $[t_k,s_k]$. We have defined $\phi_1, \phi_2, \phi_3$ on $[t_k,s_k]$. Let $t_{k+1} > \max(k+1, s_k)$ be a point such that $\log^\alpha(|t_{k+1}|+1) \geq k^{p+1}$. This ensures that $\phi_+(t_{k+1}) \geq 2k \phi_-(t_{k+1} k)$. If we now take $\phi_1=\phi_3=\phi_-$ and $\phi_2=\phi_+$ on $[s_k, t_{k+1}]$, then $\phi_2(t_{k+1}) \geq k \phi_1(t_{k+1} k)+k \phi_3(t_{k+1} k)$.  This condition (considered for every $k$) ensures that $\phi_1+\phi_3$ does not dominate $\phi_2$. As a~result the triple $\phi_1, \phi_2, \phi_3$ is good. From the construction we have $\max(\phi_1, \phi_2, \phi_3)=\phi_+$ and $\min(\phi_1, \phi_2, \phi_3)=\phi_-$. We reach the conclusion by the use of Proposition~\ref{thm:1}.
\end{example}

The critical role for embeddings of anisotropic Sobolev-Orlicz spaces into Orlicz spaces is played by the isotropic function that has he same  sublevel sets as the one governing gradient. Let us estimate those of our essentially fully anisotropic example function.

\begin{lemma}\label{lem:level-sets}
Let $p \geq 1$, $\alpha>0$. Suppose $|x|^p \leq \phi_1(x), \phi_2(x), \phi_3(x) \leq \phi_+(x)$ are non-negative even convex $C^1$ functions vanishing at the origin and such that $\max\{\phi_1, \phi_2, \phi_3\}= \phi_+$. Then 
\begin{align*}
	\tfrac{\pi}{4}(\phi_+^{-1}&(t/3))^2  \leq \tfrac14 |\{u \in \R^2: \  \phi_+(|u|) \leq \tfrac13 t \}| \\ &  \leq |\{(x,y): \  \phi_1(x)+\phi_2(y)+\phi_3(x-y) \leq t \}| \leq \tfrac{4p}{p+1} \phi'_+(\phi_+^{-1}(t))^{\frac1p}  (\phi_+^{-1}(t))^{\frac1p+1}. 
\end{align*}
\end{lemma}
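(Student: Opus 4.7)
The plan is to establish the three inequalities in the chain in turn; the leftmost is an identity and the rightmost carries the real work.

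For the leftmost inequality, I would just observe that the set $\{u\in\R^2:\phi_+(|u|)\le t/3\}$ is the closed Euclidean disk of radius $\phi_+^{-1}(t/3)$, of area $\pi(\phi_+^{-1}(t/3))^2$, so after dividing by $4$ it matches $\frac{\pi}{4}(\phi_+^{-1}(t/3))^2$ exactly.

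For the lower bound on the sublevel set I would exhibit a quarter-disk contained in it. Specifically, take $(x,y)$ with $x,y\ge 0$ and $x^2+y^2\le (\phi_+^{-1}(t/3))^2$; then $|x|,|y|\le \phi_+^{-1}(t/3)$ and, because $x$ and $y$ share a sign, also $|x-y|\le\max(x,y)\le\phi_+^{-1}(t/3)$. Using $\phi_i\le\phi_+$ and the monotonicity of $\phi_+$ on $[0,\infty)$, each of the summands $\phi_1(x),\phi_2(y),\phi_3(x-y)$ is at most $t/3$, so their sum does not exceed $t$. The quarter-disk has area $\tfrac{\pi}{4}(\phi_+^{-1}(t/3))^2$, which is the claimed lower bound.

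For the upper bound I would write $r=\phi_+^{-1}(t)$ and express the area through its one-dimensional cross-section along the $x$-axis,
\[
\mathrm{Area}=\int_\R \bigl|\{y:\phi_2(y)+\phi_3(x-y)\le t-\phi_1(x)\}\bigr|\,dx.
\]
The coercivity $\phi_2(y)\ge|y|^p$ together with $\phi_3(x-y)\ge|x-y|^p$ traps the cross-section inside both $\{|y|\le(t-\phi_1(x))_+^{1/p}\}$ and $\{|x-y|\le(t-\phi_1(x))_+^{1/p}\}$, forcing its length to be at most $2(t-\phi_1(x))_+^{1/p}$. By the Jacobian-one substitutions $(x,y)\mapsto(y,x)$ and $(x,y)\mapsto(y,x-y)$ the same reasoning gives twin bounds in terms of $\phi_2$ and $\phi_3$. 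The hypothesis $\max\{\phi_1,\phi_2,\phi_3\}=\phi_+$ guarantees that at every $s\in\R$ at least one of $\phi_i(s)$ agrees with $\phi_+(s)$; after partitioning $\R$ into the three sets $A_i=\{s:\phi_i(s)=\phi_+(s)\}$ and bounding the area contribution in the projection whose index is active, the estimate reduces to
\[
\mathrm{Area}\le\int_{-r}^{r}2(t-\phi_+(s))_+^{1/p}\,ds.
\]
Convexity of $\phi_+$ gives the tangent-at-$r$ inequality $t-\phi_+(s)\le \phi_+'(r)(r-|s|)$ valid for $|s|\le r$, so
\[
\int_{-r}^{r}2(t-\phi_+(s))_+^{1/p}\,ds\le 2\phi_+'(r)^{1/p}\int_{-r}^{r}(r-|s|)^{1/p}\,ds=\frac{4p}{p+1}\phi_+'(r)^{1/p}\,r^{1+1/p},
\]
which is exactly the advertised upper bound.

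I expect the delicate step to be the reduction from the cross-section bound based on $\phi_i$ to the one based on $\phi_+$: since $\phi_i\le\phi_+$ pointwise, the naive pointwise replacement $(t-\phi_i(s))^{1/p}\ge(t-\phi_+(s))^{1/p}$ runs in the wrong direction for any single projection, so the $\max$-hypothesis must be used essentially by adaptively choosing, at each $s\in\R$, the projection along the variable whose $\phi_i$ realizes $\phi_+(s)$. Organizing this partition between the three projection representations of the area --- so that on each piece the driving function is $\phi_+$ rather than the possibly much smaller $\phi_i$ --- is where I expect the main technical work.
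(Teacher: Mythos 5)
Your lower bound is fine (and in fact a bit more direct than the paper's chain of inclusions), and your final integration step is correct. But the upper bound has a genuine gap at exactly the step you flag as delicate: the passage from the three projection bounds to
\[
\mathrm{Area}\le\int_{-r}^{r}2(t-\phi_+(s))_+^{1/p}\,ds
\]
does not follow from ``partitioning $\R$ into the $A_i$ and switching projections.'' Each of your three bounds, e.g.\ $\mathrm{Area}\le\int_\R 2(t-\phi_1(x))_+^{1/p}\,dx$, is a \emph{global} integral over the whole line; restricting the integration variable to $A_1$ throws away part of the area, and on $A_1^c$ you are stuck with the possibly much smaller $\phi_1$, not $\phi_+$. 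You cannot compensate by patching in a piece of the $y$- or $(x-y)$-projection, because those bounds also integrate over all of $\R$: the three projections are along genuinely different directions and do not decompose the 2D set compatibly with a partition of a single 1D axis. Put differently, for a given $(x,y)$ in the sublevel set it may well happen that $x\notin A_1$, $y\notin A_2$, and $x-y\notin A_3$ simultaneously, so there is no ``active projection'' to assign it to. Note moreover that $\min_i\int 2(t-\phi_i)_+^{1/p}\ge\int 2(t-\phi_+)_+^{1/p}$ since $\phi_i\le\phi_+$, so combining the three bounds by taking a minimum pushes you in the wrong direction.

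The paper avoids this entirely by exploiting the hypothesis at a \emph{single} point rather than pointwise on the line. Writing $r=\phi_+^{-1}(t)$, there exists an index, say $i=1$, with $\phi_1(r)=\phi_+(r)$; since $\phi_1\le\phi_+$ and both are $C^1$, the graphs are tangent at $r$, so $\phi_1'(r)=\phi_+'(r)$, and convexity of $\phi_1$ gives the global linear lower bound $\phi_1(x)\ge\phi_+(r)+(|x|-r)\phi_+'(r)$. Combined with $\phi_2(y)\ge|y|^p$ and $\phi_3\ge 0$, this bounds the sublevel set by $\{|y|^p\le(r-|x|)\phi_+'(r)\}$, whose area is computed directly as $\tfrac{4p}{p+1}\phi_+'(r)^{1/p}r^{1+1/p}$. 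If instead $\phi_3$ is the maximizer at $r$, one first applies the measure-preserving shear $(x,y)\mapsto(x,x-y)$. So only one projection is ever used, the $\max$ hypothesis is invoked at one point, and the convexity used is that of the touching $\phi_i$ (through its tangent line at $r$), not of $\phi_+$. Your plan should be revised along these lines; as written, the reduction to an integral of $(t-\phi_+)_+^{1/p}$ is unjustified and is likely not even true in general.
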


\begin{proof}
Suppose $\phi_1(t)=\phi_+(t)$. Then $\phi_1(x) \geq \phi_+(t)+(|x|-t)\phi'_+(t)$. We get
\begin{flalign*}
	|\{(x,y): & \  \phi_1(x)+\phi_2(y)+\phi_3(x-y) \leq \phi_+(t) \}| \\
	& \leq |\{(x,y): \  \phi_+(t)+(|x|-t)\phi'_+(t)+|y|^p \leq \phi_+(t) \}| \\
	& = |\{(x,y): \  |y|^p \leq (t-|x|)\phi'_+(t)  \}| = 4 \phi'_+(t)^{\frac1p} \int_0^t (t-x)^{\frac1p} \dd x = 4 \phi'_+(t)^{\frac1p} \tfrac{p}{p+1} t^{\frac1p+1}.
\end{flalign*}
The same holds true if $\phi_2(t)=\phi_+(t)$. Now, suppose $\phi_3(t)=\phi_+(t)$. Applying a determinant $1$ change of variables $u=x$, $v=x-y$ we get 
\[
	|\{(x,y):  \  \phi_1(x)+\phi_2(y)+\phi_3(x-y) \leq \phi_+(t) \}|  = |\{(u,v):  \  \phi_1(u)+\phi_2(u-v)+\phi_3(v) \leq \phi_+(t) \}| 
\]
and thus we arrive at the same estimate if we repeat the above argument. We have proved the upper bound.

The inequality
\[
	|\{(x,y): \  \phi_1(x)+\phi_2(y)+\phi_3(x-y) \leq t \}| \geq  |\{(x,y): \  \phi_+(x)+\phi_+(y)+\phi_+(x-y) \leq t \}|.
\]
is obvious. Now, by convexity of $\phi_+$ we have
\begin{flalign*}
	|\{(x,y): \ & \phi_+(x)+\phi_+(y)+\phi_+(x-y) \leq t \}| \\
	&\geq |\{(x,y): \  \phi_+(x)+\phi_+(y)+\phi_+(2x)+\phi_+(2y) \leq t \}| \\
	& \geq  |\{(x,y): \  \tfrac12\phi_+(2x)+ \tfrac12 \phi_+(2y)+\phi_+(2x)+\phi_+(2y) \leq t \}| \\
	& = |\{(x,y): \  \phi_+(2x)+ \phi_+(2y) \leq \tfrac23 t \}| 
	 = \tfrac14 |\{(x,y): \  \phi_+(x)+ \phi_+(y) \leq \tfrac23 t \}| \\ 
& \geq  \tfrac14 |\{(x,y): \  \phi_+((x^2+y^2)^{\frac12}) \leq \tfrac13 t \}| = \frac14 |\{u \in \R^2: \  \phi_+(|u|) \leq \tfrac13 t \}|\\
& = \tfrac{\pi}{4}\Big(\phi_+^{-1}\big(\tfrac{t}{3}\big)\Big)^2. 
\end{flalign*}\end{proof}

\begin{corollary}\label{coro:between} For the  even non-negative convex fully anisotropic function $f:\R^2 \to \R$ such that $|x|^p \leq f(x) \leq |x|^p \log^\alpha(1+|x|)$ constructed in in Example~\ref{ex:ess-fully-aniso} and for $t>1$ it holds that 
\[\frac{1}{C} t^{\frac2p} \log^{-\frac{2\alpha}{p}}(t+1) \ \leq \ |\{x \in \R^2: \ f(x) \leq t \}| \ \leq C \ t^{\frac2p} \log^{-\frac{\alpha}{p}}(t+1)\]
with some $C=C(\alpha,p)$.
\end{corollary}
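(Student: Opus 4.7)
The plan is to apply Lemma~\ref{lem:level-sets} to the function $f$ from Example~\ref{ex:ess-fully-aniso} with the concrete choice $\phi_+(s)=s^p\log^\alpha(s+1)$, and then reduce both resulting bounds to sharp asymptotics for $\phi_+^{-1}(t)$ and $\phi_+'(\phi_+^{-1}(t))$ as $t\to\infty$. By construction, $\phi_1,\phi_2,\phi_3$ are even convex functions vanishing at $0$ satisfying $|x|^p\leq\phi_i(x)\leq\phi_+(x)$ and $\max(\phi_1,\phi_2,\phi_3)=\phi_+$, so the hypotheses of Lemma~\ref{lem:level-sets} are met (the $C^1$ regularity of $\phi_+$, which is all that is actually used in the proof of that lemma, is immediate for $\phi_+$).

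To invert $\phi_+$, setting $s=\phi_+^{-1}(t)$ and taking logarithms in $s^p\log^\alpha(s+1)=t$ gives $p\log s+\alpha\log\log(s+1)=\log t$, so $\log(s+1)\sim \tfrac{1}{p}\log(t+1)$ as $t\to\infty$. Substituting back yields the two-sided estimate
\[\phi_+^{-1}(t)\ \asymp\ t^{1/p}\log^{-\alpha/p}(t+1),\qquad t>1,\]
with constants depending only on $p,\alpha$; the analogous bound holds for $\phi_+^{-1}(t/3)$ (up to constants). For the derivative,
\[\phi_+'(s)=p s^{p-1}\log^\alpha(s+1)+\alpha\,\frac{s^p\log^{\alpha-1}(s+1)}{s+1},\]
whose first term dominates the second for $s\gtrsim 1$, hence $\phi_+'(s)\asymp p\phi_+(s)/s$, and evaluating at $s=\phi_+^{-1}(t)$ gives $\phi_+'(\phi_+^{-1}(t))\asymp pt/\phi_+^{-1}(t)$.

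Substituting into the conclusion of Lemma~\ref{lem:level-sets} finishes the proof. The lower bound reads
\[|\{f\leq t\}|\ \geq\ \tfrac{\pi}{4}\bigl(\phi_+^{-1}(t/3)\bigr)^2\ \gtrsim\ t^{2/p}\log^{-2\alpha/p}(t+1),\]
while the upper bound gives
\[|\{f\leq t\}|\ \leq\ \tfrac{4p}{p+1}\bigl(\phi_+'(\phi_+^{-1}(t))\bigr)^{1/p}\bigl(\phi_+^{-1}(t)\bigr)^{1/p+1}\ \asymp\ t^{1/p}\phi_+^{-1}(t)\ \lesssim\ t^{2/p}\log^{-\alpha/p}(t+1),\]
where the first $\asymp$ comes from the cancellation of the powers of $\phi_+^{-1}(t)$. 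The only point that needs mild care is keeping the asymptotic inversion $\phi_+^{-1}(t)\asymp t^{1/p}\log^{-\alpha/p}(t+1)$ uniform for $t>1$ (the restriction $t>1$ in the statement already ensures $\log^\alpha(t+1)$ is bounded away from zero, so the bound extends from the genuinely large $t$ regime by adjusting constants); everything else is direct calculus.
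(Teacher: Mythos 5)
Your proof is correct and follows essentially the same route as the paper: apply Lemma~\ref{lem:level-sets} with $\phi_+(s)=s^p\log^\alpha(s+1)$, establish $\phi_+^{-1}(t)\asymp t^{1/p}\log^{-\alpha/p}(t+1)$, and substitute. The one small improvement is that you first simplify $\phi_+'(s)\asymp p\,\phi_+(s)/s$ so that $\phi_+'(\phi_+^{-1}(t))^{1/p}(\phi_+^{-1}(t))^{1/p+1}\asymp t^{1/p}\phi_+^{-1}(t)$, letting the powers of $\phi_+^{-1}$ cancel cleanly; the paper instead bounds the full product $\phi_+'(t)^{1/p}t^{1/p+1}\lesssim t^2\log^{\alpha/p}(t+1)$ and then substitutes $t\mapsto\phi_+^{-1}(t)$, which is the same computation rearranged. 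Your observation that only $C^1$ regularity of $\phi_+$ (not of the piecewise-defined $\phi_i$ themselves) is actually used in the proof of Lemma~\ref{lem:level-sets} is accurate and worth keeping, since the $\phi_i$ from Example~\ref{ex:ess-fully-aniso} may have corners at the gluing points while still satisfying the convexity inequality the lemma relies on.
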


\begin{proof}
Let $\phi_1, \phi_2, \phi_3$ be the functions constructed in Example~\ref{ex:ess-fully-aniso}. Define $f(x)=\phi_1(x)+\phi_2(y)+\phi_3(x-y)$ and $\phi_+(t)=|t|^p\log^\alpha(1+|t|)$. We shall now apply Lemma \ref{lem:level-sets}. To do this we observe that for $t>1$ we have
\begin{align*}
	\phi_+'(t)^{\frac1p} t^{\frac1p + 1} & = \left(  p t^{p-1}  \log^\alpha(t+1) + \alpha \frac{t^p}{t+1} \log^{\alpha-1}(t+1) \right)^{\frac1p}  t^{\frac1p + 1}  \\
	& \leq \left(  p^{\frac1p} t^{1-\frac1p}  \log^{\frac{\alpha}{p}}(t+1) + \alpha^{\frac1p} \frac{t}{(t+1)^{\frac1p}} \log^{\frac{\alpha-1}{p}}(t+1) \right)  t^{\frac1p + 1} \\
	& \leq   p^{\frac1p} t^{2}  \log^{\frac{\alpha}{p}}(t+1) + \alpha^{\frac1p} t^2 \log^{\frac{\alpha-1}{p}}(t+1) \leq c(\alpha,p) t^2 \log^{\frac{\alpha}{p}}(t+1).
\end{align*}
We also claim that for $t>1$
\[
	c_1 t^{\frac1p} \log^{-\frac{\alpha}{p}}(t+1) \leq \phi_+^{-1}(t) \leq c_2 t^{\frac1p} \log^{-\frac{\alpha}{p}}(t+1)
\]
with some $c_i=c_i(\alpha,p)$, $i=1,2$. To this end, due to the monotonicity of $\phi_+$, it suffices to show that
\[
	\phi_+(c_2 t^{\frac1p} \log^{-\frac{\alpha}{p}}(t+1)) \geq t \geq \phi_+(c_1 t^{\frac1p} \log^{-\frac{\alpha}{p}}(t+1)).
\] 
Now it suffices to observe that
\[
	c_2^p  \log^{-\alpha}(t+1) \log^\alpha(1+c_2 t^{\frac1p} \log^{-\frac{\alpha}{p}}(t+1)) \xrightarrow[t \to \infty]{} c_2^p p^{-\alpha} .
\]
It follows that 
\[
	\frac{\pi}{4}(\phi_+^{-1}(t/3))^2 \geq \frac{\pi}{4} c_1^2 t^{\frac2p} \log^{-\frac{2\alpha}{p}}(t+1)
\]
and
\begin{flalign*}
	 \phi'_+(\phi_+^{-1}(t))^{\frac1p}  (\phi_+^{-1}(t))^{\frac1p+1} &\leq c(\alpha,p) (c_2 t^{\frac1p} \log^{-\frac{\alpha}{p}}(t+1))^2 \log^{\frac{\alpha}{p}}(1+c_2 t^{\frac{1}{p}} \log^{-\frac{\alpha}{p}}(t+1))\\
	 & \leq C(\alpha,p) t^{\frac{2}{p}} \log^{-\frac{\alpha}{p}}(t+1).
\end{flalign*}
\end{proof}

\begin{remark}
W note that the above function $f$ is not coordinate-wise monotone on $\R_+^2$. Indeed take $x>0$ and consider points $(0,x)$ and $(x,x)$. Then  coordinate-wise monotonicity would imply the inequality $f(0,x) \leq f(x,x)$, which reduces to $\phi_3(x) \leq \phi_1(x)$, which is clearly not true for certain real numbers $x$ (there are points $x$ such that $\phi_1(x)=\phi_-(x)< \phi_+(x)=\phi_3(x)$).  In particular, it does not support the monotonicity property~\eqref{monotonicity-property}.
\end{remark}

\section{Preliminaries of the Measure Data part} \label{sec:prelim}
\subsection{Notation and fundamental definitions}\label{ssec:notation}In the sequel $\Omega$ is a bounded open set in $\R^n$,  $n\geq 2$. 
We shall make use of symmetric truncations of a real-valued function\begin{equation}
\label{trunc} T_k f=\max\{-k,\min\{f,k\}\}.
\end{equation}
By $\mu_1\ll\mu_2$ we denote we mean that $\mu_1$ is absolutely continuous with respect to $\mu_2$. Assume further that $\Phi$ is an $n$-dimensional  Young function as defined in Introduction.  $C _0(\Omega)$ denotes the
space of continuous functions with compact support in $\Omega$.  A sequence of functions $\{f_k\} \subset L^1(\Omega)$ is said to weak-$\ast$ converge to $\mu$ in the space of measures if
\begin{equation}\label{meas1}
\lim _{k \to \infty} \int _\Omega \varphi f_k \, dx = \int _\Omega \varphi
\,d\mu
\end{equation}
for every function $\varphi \in C _0(\Omega)$.

\medskip

\emph{The Young conjugate} of an $n$-dimensional Young function $\Phi$ is a function $\wt \Phi:\rn\to[0,\infty)$
defined as
\begin{equation}
\label{wtPhi}
\wt \Phi (\xi) = \sup\{\eta \cdot \xi - \Phi(\eta):\ \eta \in \R^n\}\quad \hbox{for $\xi \in \R^n$\,.}
\end{equation}
Note that if $\Phi$ is an $n$-dimensional Young function or $N$-function, its conjugate $\wt\Phi$ is of the same class. Moreover, Young conjugation is involute, i.e. $\wt{(\wt\Phi)}=\Phi$.

A typical condition imposed on Young functions to infer strong properties of the functional setting generated with their use is a doubling condition.  A  Young function $\Phi$ is said to satisfy the \emph{$\Delta _2$-condition near infinity}, briefly $\Phi \in \Delta _2$ near infinity,  if it is finite--valued and there
exist positive constants $c$ and $M$ such that $\Phi (2\xi) \leq c \Phi(\xi)$   if $|\xi|\geq M$. We say that $\Phi\in\nabla_2$ (near infinity) if $\wt\Phi\in\Delta_2$ near infinity. If both $\Phi,\wt\Phi\in\Delta_2$ near infinity we call $\Phi$ doubling near infinity. In such a case there exists $1<i_\Phi\leq s_\Phi<\infty$ such that for every fixed vector $\xi \in\partial B(0,1)$ and $0\ll t$
\[
\frac{\Phi(t\xi)}{t^{i_G}}\quad\text{is non-decreasing}\qquad\text{and}\qquad\frac{\Phi(t\xi)}{t^{s_G}}\quad\text{is non-increasing,}\]
see~\cite{barlettacianchi}.

\subsection{Fully anisotropic functional setting}\label{ssec:spaces} 
Classical contribution on anisotropic version of Orlicz-Sobolev spaces is \cite{Klimov76}, see also~\cite{Cfully,Sch,Sk1}.

\medskip

\textit{The  anisotropic Orlicz space. } Let  $\Phi$ be an $n$-dimensional Young function.  The  \textit{anisotropic Orlicz space}
$L^\Phi(\Omega;\rn)$ is the set of all measurable vector-valued functions $U$ such that
the norm
$$\|U\|_{L^\Phi (\Omega;\rn)} = \inf \bigg\{\lambda >0: \int _\Omega\Phi\big(\tfrac 1\lambda U \big)\, dx \leq 1\bigg\}$$
is finite. The space  $L^\Phi (\Omega;\rn)$, equipped with this norm,
is a Banach space. We distiguish two subclasses of $L^\Phi(\Omega;\rn)$ such that
\begin{equation} \label{EPP}
E^\Phi(\Omega;\rn) \subset  \mathcal L^\Phi(\Omega;\rn) \subset L^\Phi(\Omega;\rn),
\end{equation}
where the Orlicz class $\mathcal L^\Phi(\Omega, \rn)$ consists of such functions $U$ that $ \int _\Omega\Phi\big( U \big)\, dx<\infty$, whereas the space $E^\Phi(\Omega;\rn)$ is the
closure in $L^\Phi(\Omega;\rn)$ of the space of bounded functions. Clearly, both inclusions in~\eqref{EPP} hold as equalities if and only if $ \Phi\in\Delta_2$ near infinity.  In general, $E^\Phi(\Omega;\rn)$ is separable, but $L^\Phi(\Omega;\rn)$ does not have to be.

For every $U \in L^\Phi (\Omega;\R^n)$ and $V \in
L^{\widetilde\Phi} (\Omega;\R^n)$, we have the following H\"older-type inequality
\begin{equation}\label{holder}
\int _\Omega |U \cdot   V|\,dx \leq 2\|U\|_{L^\Phi (\Omega;\R^n)}
\|V\|_{L^{\widetilde\Phi} (\Omega;\R^n)}.
\end{equation}
In fact, $L^\Phi$ and $L^{\wt{\Phi}}$ are associate spaces, but they do not have to be dual to one another. In general,  if $\Phi$ is an arbitrary $n$-dimensional  $N$-function, then by\cite[Proposition 2.3]{AdBF1} we get that
\begin{equation}\label{Dual}
\hbox{the dual
of $E^{\Phi}(\Omega;\rn)$ is isomorphic and homeomorphic to
$L^{\widetilde{\Phi}}(\Omega;\rn)$.}
\end{equation}
%, and the duality pairing is given
%by%
%$$
%< V,U > = {\displaystyle\int_{\Omega}} V(x)\cdot U(x)dx$$
%for
% $V\in L^{\widetilde{\Phi}}(\Omega;\rn)$ and $U\in E^{\Phi}
%(\Omega;\rn)$
The space $L^\Phi (\Omega; \R^n)$ is reflexive and separable, if and only  if $\Phi \in \Delta _2 \cap \nabla _2$ near infinity.

\medskip
 
  The \emph{anisotropic Orlicz-Sobolev class} is defined as
\begin{align}\label{aniso_Wclass}
{W}^{1}_0\mathcal L^\Phi(\Omega)= \{u\text{ - measurable}: & \hbox{ the continuation
of $u$ by $0$ outside $\Omega$} \\ \nonumber  & \hbox{ is weakly differentiable
in $\rn$ and $\nabla u \in \mathcal L^\Phi (\Omega{; \rn})$}\}.
\end{align}
The \emph{anisotropic Orlicz-Sobolev space} ${W}^{1}_0L^\Phi(\Omega)$ is defined accordingly,
on replacing $\mathcal L^\Phi (\Omega{; \rn})$ by~$ L^\Phi (\Omega{; \rn})$ on the right-hand side of equation \eqref{aniso_Wclass}. One has that $W^{1}_0L^\Phi(\Omega)$, equipped with the norm
$$\|u\|_{{W}^{1}_0L^\Phi(\Omega)} = \|\nabla u\|_{L^\Phi (\Omega{; \rn})},$$ is a Banach space.
%Let $\Omega$ be an open set in $\rn$ such that $|\Omega |< \infty$.
The Orlicz--Sobolev space ${W}^{1}L^\Phi(\Omega)$ is reflexive, if and only if $\Phi \in \Delta _2 \cap \nabla _2$ near infinity. Then all Orlicz classes coincide, so we abbreviate the notation to
\begin{equation}
\label{W1P}
W^{1,\Phi}(\Omega):=W^1 L^\Phi (\Omega)=W^1 \mathcal{L}^\Phi (\Omega)=W^1 E^\Phi (\Omega).
\end{equation}

\noindent Space $(W_0^{1}L^\Phi(\Omega))'$ is considered endowed with the norm
\[\| H\|_{(W_0^{1}L^\Phi(\Omega))'} =\sup\left\{\frac{\langle H, v\rangle}{\| v\|_{W^{1}L^\Phi(\Omega)}}:\  v\in W_0^{1}L^\Phi(\Omega)\right\},
\]
where $\langle\cdot, \cdot\rangle$ denoted the duality pairing. %Elements of $(W^{1,\Phi}_0(\Omega))'$ are, in general, distributions, but \cite[Th\'eor\`eme V]{s-book} yields that a nonnegative distribution is a nonnegative measure. 
The representation of functionals is given in Lemma~\ref{lem:distr}.

\subsection{Embeddings }\label{ssec:emb}  The statement of optimal anisotropic Sobolev inequality from~\cite{Cfully} requires some further definitions. 
By $\Phi_\circ : [0, \infty ) \to [0, \infty)$ we denote the Young function obeying
\begin{equation}\label{phistar}
|\{\xi \in \R^n: \Phi_\circ (|\xi|) \leq t\}|  =|\{\xi \in \R^n: \Phi
(\xi)\leq t\}|  \quad \hbox{for $t\geq 0$.}
\end{equation}
The function $\R^n \ni \xi \mapsto \Phi _\circ(|\xi|)$ can be regarded as a kind of  \lq\lq average in measure\rq\rq  \, of $\Phi$. 

\iffalse It
 can be used to define  the radially increasing symmetral $\Phi _\bigstar : \R^n
\to [0, \infty)$ of $\Phi$ by
$$\Phi _\bigstar (\xi) = \Phi_\circ (|\xi|) \qquad \hbox{for $\xi \in \R^n$.}$$
%One has that $\Phi _\bigstar$ is an $n$-dimensional Young function, then $\Phi_\circ$ is an  Young function and $\Phi _\bigstar$ is an $n$-dimensional Young function. %it was repeating with info below (2.42)
Since $\Phi_\bigstar$ is radially symmetric, the function $\Phi_\Diamond : [0,\infty)\to [0,\infty)$, defined by
\begin{equation} \label{phidiamond}
\Phi_\Diamond (|\xi|) =
\widetilde{\big({\widetilde\Phi}_\bigstar\big)}(\xi) \quad \hbox{for $\xi \in \R^n$,}
\end{equation}
is a Young function. Moreover, the function $\Phi_\Diamond$  is equivalent to $\Phi_\circ$, and  there exist constants $c_1=c_1(n)$
and $c_2=c_2(n)$ such that
\begin{equation}\label{equivdiamond}
\Phi_\circ (c_1 t) \leq \Phi_\Diamond (t) \leq \Phi_\circ (c_2 t)
\quad \hbox{for $t\geq 0$,}
\end{equation}
see~\cite[Lemma~7]{Klimov76}.
Note that  if $\Phi$ is an  $n$-dimensional  $N$-function, then the functions $\Phi_\circ$ and $\Phi_\Diamond$ are $1$-dimensional $N$-functions and $\Phi_\bigstar$ is  an $n$-dimensional  $N$-function.

We are in the position to present the embedding results. \fi

A basic anisotropic Poincar\'e-type inequality coming from~\cite{barlettacianchi} yields that there exists a~constant $\kappa=c(n)|\Omega|^{-\frac{1}{n}}$ such that
\begin{equation}\label{anisopoinc}
\int_\Omega \Phi_\circ (\kappa|u|)\, dx \leq \int _\Omega \Phi( \nabla u)\, dx
\qquad\text{for every}\quad u
\in W_0^{1}\mathcal{L}^{\Phi} (\Omega ).\end{equation} and \begin{equation}\label{norm-poinc}
 \|u \|_{L^{\Phi _\circ}(\Omega )} \leq c(\kappa)\|\nabla u \|_{L^\Phi (\Omega;\rn )} \qquad\text{for every}\quad u \in W_0^{1}L^{\Phi} (\Omega ).
\end{equation}

We shall pass to Sobolev-type embeddings. When $\Phi _\circ(t)$ is growing slowly close to infinity, with the special case of $t^p$ with $p<n$, we will have $W^{1}L^{\Phi} \subset L^{\Phi_n}$ with $\Phi_n$ prescribed below. If the growth of $\Phi _\circ$ close to infinity is quick, the Orlicz-Sobolev space is embedded into $L^\infty$. For formulating Sobolev inequalities we shall assume control on the values of $\Phi_\circ$ near zero, which for the embedding play no role, see Remark~\ref{rem:convention}. Thus, with no loss of generality, let us start with assuming that
\begin{equation}\label{conv0}
\int _0\bigg(\frac \tau{\Phi _\circ (\tau)}\bigg)^{\frac 1{n-1}}\, d\tau <
\infty\,.
\end{equation}
 
As it was mentioned in Introduction, if $\Phi_\circ$ is growing fast in infinity, Orlicz-Sobolev functions are bounded and continuous and this case is not interesting for us now. Suppose $\Phi_\circ$ is growing so slow in infinity that~\eqref{slow-growth-1} holds, equivalently that
\begin{equation}
\label{intdiv}
\int^\infty\left(\frac{\tau}{\Phi_\circ(\tau)}\right)^\frac{1}{n-1}\,d\tau
=\infty\,,
\end{equation}
and denote by  $H : [0, \infty ) \to [0, \infty)$ is given by
\begin{equation}\label{H1}
    H(t)= \bigg(\int _0^t \bigg(\frac \tau{\Phi _\circ (\tau)}\bigg)^{\frac 1{n-1}}\, d\tau\bigg)^{\frac {n-1}{n}} \quad \hbox{for $\ t \geq 0$,}
\end{equation}
where $H^{-1}$ is the
generalized left-continuous inverse of~$H$. Let $\Phi _n : [0, \infty ) \to [0, \infty]$ be the
Sobolev conjugate of $\Phi$ introduced in \cite{Cfully}. Namely, $\Phi_n$ is the Young function   defined as
\begin{equation}\label{sobconj}
\Phi_n (t)= \Phi _\circ (H^{-1}(t)) \quad \hbox{for $t \geq 0$.}
\end{equation}
By \cite[Theorem 1 and Remark 1]{Cfully}, there exists a constant $\kappa=\kappa(n)$ such that
\begin{equation}\label{B-W} 
\int_{\Omega}\Phi _n\left(\frac{ |u|}{\kappa\,
(\int_{\Omega}\Phi(\nabla u)dy)^{\frac 1{n}}}\right)dx \leq
\int_{\Omega}\Phi(\nabla u)\, dx \quad\text{for every}\quad u \in W_0^{1}\mathcal{L}^{\Phi} (\Omega )
\end{equation}
  and
\begin{equation}\label{B-Wbis}
 \|u \|_{L^{\Phi _n}(\Omega )} \leq \kappa\|\nabla u \|_{L^\Phi (\Omega;\rn )} \quad\text{for every}\quad u \in W_0^{1}L^{\Phi} (\Omega ).
\end{equation}
 Moreover, $L^{\Phi _n}(\Omega )$ is the optimal, i.e. the smallest possible, Orlicz space which renders \eqref{B-Wbis} true for all $n$-dimensional  Young functions $\Phi$ with prescribed $\Phi _\circ$.

%If $\Phi_\circ$ grows so fast near infinity for \eqref{intdiv} to fail, then \eqref{B-Wbis} yields
%\begin{equation}\label{B-Wter} \|u \|_{L^{\infty}(\Omega )} \leq C \|u \|_{W_0^{1}L^\Phi (\Omega )}\quad\text{for every}\quad u \in W_0^{1}L^\Phi (\Omega ).\end{equation} 

\begin{remark}\label{rem:convention}
{\rm Since we are assuming that $|\Omega| < \infty$, inequality 
\eqref{B-Wbis}  continue to hold even if
\eqref{conv0} fails, provided that $\Phi _n$ is defined with
$\Phi_\circ$ replaced by another Young function equivalent near
infinity, which renders \eqref{conv0} true. We shall adopt the convention that $\Phi _n$ is defined according to this procedure in
what follows, whenever needed.}
\end{remark}

 \subsection{Modular convergence and density} The already classical theorems by Gossez~\cite{Gossez} yields density of smooth functions in Orlicz-Sobolev spaces not in norm, but in a weaker topology -- so-called modular one. Due to~\cite{ACCZG,CGWKSG,pgisazg1} this type of result holds also in anisotropic spaces. 
 
  A~sequence $\{U_k\}\subset L^\Phi(\Omega; \R^n)$ is said to converge
modularly to $U$ in $L^\Phi(\Omega{;} \R^n)$ if there exists
$\lambda>0$ such that
\begin{equation}
\label{july41} \lim _{k\to
\infty}\int_{\Omega}\Phi\left(\tfrac{1}{\lambda}(U_k-U)\right)\, dx= 0,
\end{equation} 
or, equivalently, if $U_k\to U$ in measure and there exists
$\lambda>0$ such that $\{\Phi\left(\tfrac{1}{\lambda}U_k\right)\}_k$ is uniformly integrable in $L^1(\Omega)$. 

Since we have~\eqref{B-W}, we say that $u_k\to u$ modularly in $W^{1}L^{\Phi} (\Omega)$ if
\begin{equation}\label{modular-convergence} u_k\to u\quad\text{in }\quad L^{\Phi_n}(\Omega)\qquad\text{and}\qquad \nabla u_k \to
\nabla u \quad \hbox{modularly in $L^\Phi(\Omega;\R^n)$.}
\end{equation}
We will consider Cauchy sequences with respect to this convergence, as well as density of regular functions in topology generated by this convergence. Note that (only) within the doubling regime the modular convergence is equivalent to the norm one. 

\begin{proposition}[Proposition~2.2, \cite{ACCZG}]\label{prop:conv:mod-weak}  Let $\Phi$ be an $n$-dimensional  $N$-function and let $\Omega\subset \rn$ with $|\Omega|<\infty$. Assume that $U_k \to U$ modularly in $L^\Phi(\Omega; \rn)$.  Then there exists a subsequence of $\{U_k\}$, still indexed by $k$, such that
\begin{equation}
\label{july37} \lim _{k \to \infty}\int_\Omega U_k \cdot V\,dx
=  \int_\Omega  U \cdot V \,dx\qquad \text{for every }\quad V \in L^{\widetilde  \Phi}(\Omega;
\rn).
\end{equation}
\end{proposition}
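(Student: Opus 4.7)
The plan is to combine the equivalent characterization of modular convergence recalled just before the statement with the Fenchel--Young inequality and Vitali's convergence theorem, together with a scaling reduction on the test function $V$.

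First, I would exploit the equivalent form of modular convergence: $U_k\to U$ in measure together with uniform integrability of $\{\Phi(\lambda^{-1}U_k)\}$ for some $\lambda>0$, which in particular is equivalent to $\int_\Omega \Phi(\lambda^{-1}(U_k-U))\,dx\to 0$. Passing to a subsequence (the one that appears in the statement), I may assume $U_k\to U$ almost everywhere in $\Omega$.

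Now fix $V\in L^{\widetilde\Phi}(\Omega;\rn)$ and assume first the normalization $\|V\|_{L^{\widetilde\Phi}(\Omega;\rn)}\leq 1/\lambda$, so that $\int_\Omega \widetilde\Phi(\lambda V)\,dx\leq 1$. Applying the pointwise Fenchel--Young inequality with $\xi=\lambda^{-1}(U_k-U)$ and $\eta=\lambda V$ yields
\[
|(U_k-U)\cdot V|\leq \Phi\bigl(\lambda^{-1}(U_k-U)\bigr)+\widetilde\Phi(\lambda V).
\]
Since $\Phi(\lambda^{-1}(U_k-U))\to 0$ in $L^1(\Omega)$ (by the very definition of modular convergence) and $\widetilde\Phi(\lambda V)\in L^1(\Omega)$ is a fixed integrable function, the family $\{(U_k-U)\cdot V\}_k$ is uniformly integrable on $\Omega$. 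Combined with the a.e.\ convergence $(U_k-U)\cdot V\to 0$, Vitali's convergence theorem gives $(U_k-U)\cdot V\to 0$ in $L^1(\Omega)$, hence the claimed convergence of integrals. For an arbitrary $V\in L^{\widetilde\Phi}(\Omega;\rn)$, I would then invoke linearity of the map $V\mapsto \int_\Omega U_k\cdot V\,dx$: setting $c=\max(1,\lambda\|V\|_{L^{\widetilde\Phi}})$, the rescaled test field $V/c$ falls in the previous case, so the convergence holds for $V/c$, and multiplying by $c$ recovers the convergence for $V$.

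The main obstacle is the scale mismatch between the modular parameter $\lambda$ governing the convergence of $\{U_k\}$ and the Luxemburg norm of the test field $V$: without assuming that $\Phi$ (or $\widetilde\Phi$) is doubling, one cannot replace the Orlicz class $\mathcal L^{\widetilde\Phi}$ by $L^{\widetilde\Phi}$ for free, so the Fenchel--Young splitting produces a useful integrable majorant only when the two scales are compatible. The homogeneity reduction on $V$ is precisely what bypasses this obstruction and allows a single subsequence to serve for all $V\in L^{\widetilde\Phi}(\Omega;\rn)$.
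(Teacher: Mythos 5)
Your argument is correct, and since the paper itself cites this as Proposition~2.2 of~\cite{ACCZG} without reproducing a proof, there is no internal proof to compare it against. The chain you use — extract an a.e.-convergent subsequence from the convergence in measure, apply the Fenchel--Young inequality $|(U_k-U)\cdot V|\leq \Phi(\lambda^{-1}(U_k-U))+\widetilde\Phi(\lambda V)$ on a rescaled test field with $\|\lambda V\|_{L^{\widetilde\Phi}}\leq 1$ (which indeed forces $\int_\Omega\widetilde\Phi(\lambda V)\,dx\leq 1$, so the majorant is integrable), observe that $\Phi(\lambda^{-1}(U_k-U))\to 0$ in $L^1$ so the family is uniformly integrable, invoke Vitali on the finite-measure $\Omega$, and finally dilate $V$ by the factor $c=\max(1,\lambda\|V\|_{L^{\widetilde\Phi}})$ — is sound, and crucially the subsequence is chosen from the $U_k$ alone so it serves every $V$ simultaneously. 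The scaling step is exactly what dissolves the mismatch between the modular parameter $\lambda$ and the Luxemburg norm of $V$ in the non-doubling setting, and it is the standard device (essentially the one behind the statement in~\cite{ACCZG} and behind Gossez-type duality arguments) so your proposal recovers the expected proof of the cited result.
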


\smallskip

Simple functions are dense in the modular topology 
in anisotropic Orlicz spaces.

\begin{proposition}[Proposition~2.3, \cite{ACCZG}]\label{prop:modulardensity}
 Let $\Phi$ be an $n$-dimensional  $N$-function and let $\Omega$ be a measurable set in $\rn$. Assume that  $U\in L^\Phi(\Omega;\rn)$. Then there exists  a sequence of simple
functions $\{U_k\}$ converging modularly to $U$ in $L^\Phi(\Omega, \rn)$.
\end{proposition}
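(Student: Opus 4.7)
The plan is a two-stage approximation followed by a diagonal argument. First I would fix $\lambda > 0$ such that $\int_\Omega \Phi(U/\lambda)\,dx < \infty$ (such $\lambda$ exists by the very definition of the Luxemburg norm on $L^\Phi$), and then perform a joint spatial-and-magnitude truncation: set $V_N = U \cdot \mathbf{1}_{\Omega \cap B(0,N) \cap \{|U| \leq N\}}$. Since $|U|$ is finite almost everywhere (as $\Phi(U/\lambda) \in L^1$ and $\Phi(\xi) \to \infty$ as $|\xi| \to \infty$), we have $V_N \to U$ pointwise a.e.; by evenness of $\Phi$ the integrand $\Phi((V_N - U)/\lambda)$ equals either $\Phi(U/\lambda)$ or $0$ and is pointwise dominated by the $L^1$ function $\Phi(U/\lambda)$, so the dominated convergence theorem gives modular convergence $V_N \to U$ in $L^\Phi(\Omega; \R^n)$ with the same choice of $\lambda$.

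Next, for each fixed $N$ I would discretize the codomain by cube-centres. Partition $\R^n$ into closed cubes $\{Q_j\}$ of side $1/k$ centred at $v_j \in \tfrac{1}{k}\Z^n$ with $v_0 = 0$, and define $U_{N,k}(x) = \sum_j v_j \mathbf{1}_{V_N^{-1}(Q_j)}(x)$. Because $V_N$ takes values in $\{|v| \leq N\}$ and is supported on a set of finite measure $M_N := |\Omega \cap B(0,N) \cap \{|U| \leq N\}|$, the sum collapses to finitely many non-trivial terms, making $U_{N,k}$ genuinely simple; moreover $|U_{N,k} - V_N| \leq \sqrt{n}/(2k)$ uniformly, with both terms vanishing off a common finite-measure support. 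Continuity of $\Phi$ at the origin, together with $\Phi(0) = 0$, gives $\omega(r) := \sup_{|\xi| \leq r} \Phi(\xi) \to 0$ as $r \to 0$, so $\int_\Omega \Phi((U_{N,k} - V_N)/\lambda)\,dx \leq M_N\,\omega(\sqrt{n}/(2k\lambda)) \to 0$ as $k \to \infty$ with $N$ fixed.

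A diagonal choice $k = k_N$ with $\int_\Omega \Phi((U_{N,k_N} - V_N)/\lambda)\,dx < 1/N$ then produces simple $W_N := U_{N,k_N}$; combining both stages with the convexity inequality $\Phi((a-b)/(2\lambda)) \leq \tfrac12\Phi(a/\lambda) + \tfrac12\Phi(b/\lambda)$ yields $\int_\Omega \Phi((W_N - U)/(2\lambda))\,dx \to 0$, establishing modular convergence. The main subtlety specific to the present setting is that a fully anisotropic $\Phi$ need not satisfy the monotonicity property~\eqref{monotonicity-property} (highlighted throughout Section~\ref{sec:plane}), which blocks a naive dominated-convergence argument of the form $\Phi(U_{N,k}/\lambda) \leq \Phi(c\,U/\lambda)$; I sidestep this by controlling uniformly the \emph{difference} $|U_{N,k} - V_N|$ on a finite-measure support, rather than comparing $U_{N,k}$ to $U$ through any monotone bound.
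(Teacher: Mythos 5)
Your proof is correct, and the key insight you flag is the right one: the full anisotropy of $\Phi$ rules out the isotropic-style domination $\Phi(U_{N,k}/\lambda)\lesssim\Phi(cU/\lambda)$, so one must instead control the error term $U_{N,k}-V_N$ uniformly on a finite-measure support, which you do cleanly via the modulus $\omega(r)=\sup_{|\xi|\le r}\Phi(\xi)$. The paper itself does not reproduce a proof of this proposition (it cites~\cite{ACCZG}), so there is no in-text argument to compare against, but your two-stage scheme — truncate in space and magnitude to force the error onto the set $\{V_N\ne U\}$ where $\Phi((V_N-U)/\lambda)=\Phi(U/\lambda)\in L^1$ and apply dominated convergence, then discretize the codomain by a lattice of mesh $1/k$ with $v_0=0$ to keep the function genuinely simple, and finally diagonalize using the convexity bound $\Phi\bigl(\tfrac{a+b}{2\lambda}\bigr)\le\tfrac12\Phi(a/\lambda)+\tfrac12\Phi(b/\lambda)$ — is exactly the standard adaptation to the anisotropic setting and is what one would expect in~\cite{ACCZG}. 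Two cosmetic remarks: the cubes should be taken half-open so that $\{V_N^{-1}(Q_j)\}$ is a genuine partition (as written, preimages of cube boundaries can carry positive measure for ill-chosen $V_N$), and it would be worth stating explicitly that $\Phi$, being a finite-valued convex function on $\R^n$ with $\Phi(0)=0$, is continuous, which is what makes $\omega(r)\to0$ as $r\to0^+$; both points are routine and do not affect the validity of the argument.
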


%Then we have the following properties.
%\begin{proposition}\label{prop:conv:mod-weak-Sobolev}  Let $\Phi$ be an $n$-dimensional  $N$-function and let $\Omega\subset\rn$  with $|\Omega|<\infty$. Let $\vp,\vp_k,\zeta,\zeta_k:\Omega\to \rn$ be measurable and such that $\vp_k \to \vp$ modularly in $W^{1,\Phi} (\Omega)$ and $\zeta_k \to \zeta$ modularly in $W^1L^{\wt \Phi}(\Omega)$.  Then there exists: a subsequence of $\{\vp_k\}$, still indexed by $k$, such that $ \lim _{k \to \infty}\langle \zeta,\vp_k\rangle=  \langle\zeta,\vp\rangle $ and a subsequence of for every  $\{\zeta_k\}$, still indexed by $k$, such that $ \lim _{k \to \infty}\langle \zeta_k,\vp\rangle=  \langle\zeta,\vp\rangle .$ \end{proposition}

We present below an anisotropic counterpart of Gossez's approximation theorems, cf.~\cite{Gossez}. 
\begin{proposition}[Proposition~2.4, \cite{ACCZG}]\label{prop:approx-0} Let $\Phi$ be an $n$-dimensional  $N$-function and  let  $\Omega$ be a bounded domain in $\R^n$ having a segment property.  Assume that  $\vp\in W_0^{1}L^{\Phi}(\Omega)\cap L^\infty(\Omega)$. Then there exists  a constant $C=C(\Omega)$ and a sequence $\{\vp_k\} \subset C_0^\infty(\Omega)$ such that $\vp_k \to  \vp$ a.e. in $\Omega$,  $\|\vp_k\|_{L^\infty(\Omega)}\leq C\|\vp\|_{L^\infty(\Omega)}$ {for every $k\in\N$,} and $\vp_k\to \vp$ modularly in $W^{1}L^{\Phi} (\Omega)$.\\
If additionally $\Phi,\wt\Phi\in\Delta_2$, then $\vp_k\to\vp$ in strong (norm) topology in $W^{1}L^{\Phi}  (\Omega)$.
\end{proposition}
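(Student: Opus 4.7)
My plan is to follow the classical strategy of Gossez adapted to the fully anisotropic setting: localize via a partition of unity matched to the segment property, shift each localized piece slightly inside $\Omega$ along an admissible direction, and finish by mollification. By the segment property there exist finitely many open sets $U_0, U_1, \dots, U_m$ covering $\overline\Omega$, with $U_0 \Subset \Omega$, and nonzero vectors $y_i \in \R^n$ such that $\overline{U_i \cap \Omega} + t y_i \Subset \Omega$ for all $i \geq 1$ and all sufficiently small $t > 0$. Pick a smooth partition of unity $\{\theta_i\}_{i=0}^m$ subordinate to this cover, extend $\vp$ by zero outside $\Omega$, and decompose $\vp = \sum_{i=0}^m \theta_i \vp$. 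Each summand is in $W_0^{1}L^\Phi(\Omega) \cap L^\infty(\Omega)$, and since the sought convergences are all linear, it suffices to approximate each $\theta_i \vp$ separately.

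For the interior piece $\theta_0 \vp$ I would directly convolve with a standard smooth mollifier $\rho_\delta$; for small enough $\delta > 0$ the result belongs to $C_0^\infty(\Omega)$. For each boundary piece $\theta_i \vp$ with $i \geq 1$ I would first translate by $t y_i$ to push the support strictly inside $\Omega$, and then mollify with $\rho_\delta$ where $\delta$ is chosen sufficiently small (depending on $t$) so that the convolution lies in $C_0^\infty(\Omega)$. The $L^\infty$ bound is automatic: convolution with a probability kernel and translation are contractions on $L^\infty$, so $\|\vp_k\|_{L^\infty} \leq \bigl(\sum_i \|\theta_i\|_{L^\infty}\bigr)\|\vp\|_{L^\infty}$, with the constant depending only on the fixed partition of unity, hence on $\Omega$.

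The heart of the argument is the modular convergence of both the translation and the mollification step in $W^{1}L^\Phi$. Since $\nabla(\theta_i\vp) \in L^\Phi(\Omega;\rn)$, one can choose $\lambda > 0$ large enough that $\Phi(\nabla(\theta_i\vp)/\lambda) \in L^1(\Omega)$. By convexity,
\[
\Phi\bigl(\tfrac{1}{2\lambda}(\tau_{ty_i}\nabla(\theta_i\vp) - \nabla(\theta_i\vp))\bigr) \leq \tfrac12 \Phi\bigl(\tfrac{1}{\lambda}\tau_{ty_i}\nabla(\theta_i\vp)\bigr) + \tfrac12 \Phi\bigl(\tfrac{1}{\lambda}\nabla(\theta_i\vp)\bigr),
\]
and the right-hand side is uniformly integrable by translation invariance of Lebesgue measure together with integrability of a single dominating function; the left-hand side tends to zero a.e.\ as $t \to 0$ by continuity of translations on measurable functions, so Vitali's theorem yields $L^1$ convergence of the modular. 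An analogous estimate, using Jensen's inequality applied to the mollifying kernel, gives modular convergence of $\rho_\delta \ast U \to U$ in $L^\Phi$ for $U = \tau_{ty_i}(\theta_i \vp)$ and for its gradient as $\delta \to 0$. Modular convergence of $\vp_k \to \vp$ in $L^{\Phi_n}(\Omega)$ required in~\eqref{modular-convergence} follows from the Sobolev--Poincar\'e estimate~\eqref{B-Wbis} and~\eqref{norm-poinc} combined with the same Vitali-type argument. A diagonal subsequence extraction, joined with a.e.\ convergence of convolutions and translations, delivers the desired sequence $\{\vp_k\} \subset C_0^\infty(\Omega)$.

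The main obstacle, typical for Orlicz-type density results outside the doubling regime, is that neither translations nor convolutions are norm-continuous on $L^\Phi$ in general; this is exactly why the statement must settle for modular convergence. If however $\Phi, \wt\Phi \in \Delta_2$, then $L^\Phi = E^\Phi$ is separable and the uniform integrability produced above upgrades automatically to Luxemburg norm control via the doubling growth, so modular and norm convergence in $W^{1}L^\Phi(\Omega)$ coincide, and the same sequence $\{\vp_k\}$ witnesses the stronger conclusion.
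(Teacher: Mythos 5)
The paper does not supply its own proof here: Proposition~\ref{prop:approx-0} is quoted verbatim from~\cite{ACCZG}, so there is no in-paper argument to compare against. Your proposal is the classical Gossez route (segment-property cover, smooth partition of unity, inward translation of the boundary pieces, mollification, Jensen plus a Vitali/uniform-integrability argument for the modular), which is precisely how this kind of density statement is established and is almost certainly the argument in the cited reference. Two small imprecisions worth noting: translations $\tau_{ty_i}U$ converge to $U$ in measure (hence a.e.\ only along a subsequence), not a.e.\ for every null sequence $t\to 0$, so the Vitali step should be phrased with convergence in measure; and the requirement in~\eqref{modular-convergence} is norm convergence $\vp_k\to\vp$ in $L^{\Phi_n}(\Omega)$, not modular convergence there --- but this does follow as you intend, since $\int_\Omega\Phi\bigl(\tfrac1\lambda(\nabla\vp_k-\nabla\vp)\bigr)\,dx=\ve_k\to 0$ together with~\eqref{B-W} gives $\|\vp_k-\vp\|_{L^{\Phi_n}(\Omega)}\leq\lambda\kappa\,\ve_k^{1/n}\to 0$. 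Finally, when recombining the pieces $\theta_i\vp$ you implicitly use that a finite sum of modularly convergent sequences converges modularly; this is true by one more application of convexity of $\Phi$ (take $\lambda=(m+1)\max_i\lambda_i$), and it deserves a line since modular convergence is not a norm topology.
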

The same reasoning based on convolution, but without splitting to segments give the following result, where we do not expect zero trace of approximating sequence.
\begin{proposition}\label{prop:approx} Let $\Phi$ be an $n$-dimensional  $N$-function and  let  $\Omega$ be a bounded  domain in $\R^n$.  Assume that  $\vp\in W^{1}_0L^{\Phi}(\Omega)\cap L^\infty(\Omega)$. Then there exists  a constant $C=C(\Omega)$ and a sequence $\{\vp_k\} \subset C^\infty(\Omega)$ such that $\vp_k \to  \vp$ a.e. in $\Omega$,  $\|\vp_k\|_{L^\infty(\Omega)}\leq C\|\vp\|_{L^\infty(\Omega)}$ {for every $k\in\N$,} and $\vp_k\to \vp$ modularly in $W^{1}L^{\Phi}  (\Omega)$.\\
If additionally $\Phi,\wt\Phi\in \Delta_2$, then $\vp_k\to\vp$ in strong (norm) topology in $W^{1}L^{\Phi}  (\Omega)$.
\end{proposition}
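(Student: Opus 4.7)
The plan is to use the standard mollifier argument, which becomes straightforward once we drop the requirement $\vp_k \in C_0^\infty(\Omega)$: the segment property is used in Proposition~\ref{prop:approx-0} only to preserve vanishing traces via a partition-of-unity-and-translate scheme, but if no boundary behaviour is required of the approximating sequence we may simply extend by zero and mollify. Since $\vp \in W_0^{1}L^{\Phi}(\Omega)$, its zero extension $\tilde\vp$ lies in $W^1L^\Phi(\R^n)$, has compact support contained in $\overline\Omega$, and satisfies $\|\tilde\vp\|_{L^\infty(\R^n)}=\|\vp\|_{L^\infty(\Omega)}$. Pick a standard nonnegative radial mollifier $\rho_\e \in C_0^\infty(B(0,\e))$ with $\int\rho_\e=1$, and set $\vp_\e:=\tilde\vp * \rho_\e\in C^\infty(\R^n)$; restriction to $\Omega$ gives elements of $C^\infty(\Omega)$. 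Young's convolution inequality yields $\|\vp_\e\|_{L^\infty(\Omega)} \leq \|\vp\|_{L^\infty(\Omega)}$, so the constant $C$ in the statement may be taken equal to $1$, and Lebesgue differentiation (using $\tilde\vp, \nabla\tilde\vp \in L^1(\R^n)$, which holds since $L^\Phi \subset L^1$ on bounded domains) gives $\vp_\e\to\vp$ and $\nabla\vp_\e\to\nabla\vp$ almost everywhere in $\Omega$.

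It remains to check the two conditions in~\eqref{modular-convergence}. Convergence in $L^{\Phi_n}(\Omega)$ is immediate: $\vp_\e$ and $\vp$ are both bounded by $\|\vp\|_{L^\infty(\Omega)}$, $|\Omega|<\infty$, and $\vp_\e\to\vp$ a.e., so for every $\lambda>0$ the integrand $\Phi_n(\lambda^{-1}|\vp_\e-\vp|)$ is dominated by a constant and converges a.e.\ to zero; dominated convergence together with the definition of the Luxemburg norm yields $\|\vp_\e-\vp\|_{L^{\Phi_n}(\Omega)}\to 0$. For the gradient modular convergence, choose any $\lambda>0$ with $\int_\Omega \Phi(\lambda^{-1}\nabla\vp)\,dx < \infty$. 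By convexity of $\Phi$ applied with the probability measure $\rho_\e(y)\,dy$, Jensen's inequality gives
\[
\Phi\bigl(\lambda^{-1}\nabla\vp_\e(x)\bigr) \leq \int \Phi\bigl(\lambda^{-1}\nabla\tilde\vp(x-y)\bigr)\rho_\e(y)\,dy,
\]
so integrating over an arbitrary measurable $A\subset\Omega$ and applying Fubini shows
\[
\int_A \Phi\bigl(\lambda^{-1}\nabla\vp_\e\bigr)\,dx \leq \int\rho_\e(y)\int_{A-y}\Phi\bigl(\lambda^{-1}\nabla\tilde\vp\bigr)\,dx\,dy.
\]
Since $\Phi(\lambda^{-1}\nabla\tilde\vp)\in L^1(\R^n)$, absolute continuity of the Lebesgue integral makes the right-hand side uniformly small in $\e$ whenever $|A|$ is small, which is uniform integrability of $\{\Phi(\lambda^{-1}\nabla\vp_\e)\}_\e$ on $\Omega$; convexity of $\Phi$ then promotes this to uniform integrability of $\{\Phi((2\lambda)^{-1}(\nabla\vp_\e-\nabla\vp))\}_\e$. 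Combined with a.e.\ convergence of this last sequence to zero, Vitali's convergence theorem yields $\int_\Omega\Phi\bigl((2\lambda)^{-1}(\nabla\vp_\e-\nabla\vp)\bigr)\,dx \to 0$, which is precisely modular convergence of $\nabla\vp_\e$ to $\nabla\vp$ in $L^\Phi(\Omega;\R^n)$.

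The only conceptually nontrivial step is the last one; if $\Phi$ were doubling, one could simply invoke continuity of translations in the Orlicz norm, but this fails for generic Young $\Phi$ and is replaced here by the Jensen--Vitali argument, which requires only a single well-chosen $\lambda$. When additionally $\Phi,\wt\Phi\in\Delta_2$, the modular and norm topologies coincide on both $L^\Phi$ and $L^{\Phi_n}$, so the same sequence converges in the strong topology of $W^{1}L^\Phi(\Omega)$, finishing the claim.
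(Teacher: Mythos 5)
Your proof is correct and follows exactly the route the paper intends: the paper does not spell out a proof of this proposition but states that it is ``the same reasoning based on convolution'' as Proposition~\ref{prop:approx-0} with the segment-property/partition-of-unity step dropped, and your zero-extension-plus-mollification argument, with Jensen's inequality giving translation-uniform integrability of $\{\Phi(\lambda^{-1}\nabla\vp_\e)\}_\e$ and Vitali's theorem upgrading a.e.\ convergence to modular convergence, is precisely that argument. The details are sound (in particular the choice of a single admissible $\lambda$ with $\int_\Omega\Phi(\lambda^{-1}\nabla\vp)\,dx<\infty$, which exists because $\nabla\vp\in L^\Phi$, and the convexity split $\Phi((2\lambda)^{-1}(a-b))\le\tfrac12\Phi(\lambda^{-1}a)+\tfrac12\Phi(\lambda^{-1}b)$), and the final $\Delta_2\cap\nabla_2$ remark is standard.
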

%\begin{remark}\label{rem:approx-iso} Due to the results of Gossez~\cite{Gossez}, in the isotropic case, one can approximate also unbounded functions $u$.\end{remark}

Moreover, we have the following version of compactness.
\begin{proposition}[Anisotropic De La Vall\'ee Poussin Theorem, \cite{CGWKSG}] \label{prop:delaVP}
 Let $\Phi$ be an $n$-dimensional Young function and  $\{U_\sigma\}_{\sigma}$ be a family of measurable vector-valued functions such that $\sup_{\sigma}\int_\Omega \Phi(U_\sigma)\,
dx<\infty$. Then  $\{U_\sigma\}_\sigma$ is uniformly integrable in $L^1$.
\end{proposition}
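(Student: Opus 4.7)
The plan is to verify the two conditions classically equivalent to uniform integrability in $L^1(\Omega)$: a uniform $L^1$ bound, together with the tail condition
\[
\lim_{M\to\infty}\,\sup_\sigma \int_{\{|U_\sigma|>M\}}|U_\sigma|\,dx = 0,
\]
from which equi‑absolute continuity of the integrals follows by a standard splitting argument.

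First I would establish the uniform $L^1$ bound by exploiting the assumption that $\{\xi:\Phi(\xi)\leq 1\}$ is compact and contains $0$ in its interior. This provides a radius $R>0$ such that $\{\Phi\leq 1\}\subset B(0,R)$; convexity together with $\Phi(0)=0$ then upgrades this to
\[
\Phi(\xi)\ \geq\ \tfrac{|\xi|}{R}-1\qquad\text{for every }\xi\in\R^n.
\]
Integrating over $\Omega$ yields $\|U_\sigma\|_{L^1(\Omega)}\leq R\bigl(\int_\Omega \Phi(U_\sigma)\,dx+|\Omega|\bigr)$, which is uniformly bounded by hypothesis.

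Second, and this is the crux, I would produce a nondecreasing function $\Psi:\rp0\to\rp0$ with $\Psi(t)/t\to\infty$ and $\Phi(\xi)\geq \Psi(|\xi|)$. A natural candidate is $\Psi(t):=\inf_{|\xi|\geq t}\Phi(\xi)$; the compact sublevel set condition gives $\Psi(t)\to\infty$ (otherwise $\{\Phi\leq M\}$ would fail to be bounded), and the $n$‑dimensional Young‑function structure, combined with convexity and compactness of the unit sphere, is used to promote this to superlinear growth $\Psi(t)/t\to\infty$. With such $\Psi$ in hand, given $\e>0$ pick $M$ such that $\Psi(s)\geq s/\e$ for $s\geq M$. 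Then on $\{|U_\sigma|\geq M\}$ we have the pointwise bound $|U_\sigma|\leq \e\,\Phi(U_\sigma)$, and hence
\[
\int_{\{|U_\sigma|\geq M\}}|U_\sigma|\,dx\ \leq\ \e\int_\Omega \Phi(U_\sigma)\,dx\ \leq\ \e K,
\]
where $K:=\sup_\sigma\int_\Omega \Phi(U_\sigma)\,dx$.

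Finally, equi‑absolute continuity is immediate: for measurable $E\subset\Omega$ with $|E|<\delta$,
\[
\int_E |U_\sigma|\,dx\ \leq\ M|E|+\int_{\{|U_\sigma|>M\}}|U_\sigma|\,dx\ \leq\ M\delta+\e K,
\]
which is arbitrarily small after choosing $M$ for $\e$ and then $\delta<\e/M$. The main obstacle is the second step: justifying that $\Psi(t)/t\to\infty$, i.e.\ that $\Phi$ grows superlinearly along every direction at a rate that is uniform across directions. This is where the combination of convexity, evenness, and compactness of all sublevel sets (with $0$ in the interior) has to be exploited in an essentially multidimensional way, as opposed to the isotropic case in which the corresponding statement is immediate from $\Phi_\circ(t)/t\to\infty$.
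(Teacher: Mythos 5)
The overall strategy — reduce to the radial lower envelope $\Psi(t):=\inf_{|\xi|\geq t}\Phi(\xi)$, establish $\Psi(t)/t\to\infty$, then run the classical one-dimensional de la Vall\'ee Poussin argument — is the right one, and your Step 1 (the linear lower bound $\Phi(\xi)\geq |\xi|/R-1$ from convexity, $\Phi(0)=0$, and $\{\Phi\leq 1\}\subset B(0,R)$) and Step 3 (tail estimate plus the splitting for equi-absolute continuity) are both correct.

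The gap is in the claim that $\Psi(t)/t\to\infty$ follows from the Young-function structure, convexity, evenness, and compactness of sublevel sets (with $0$ in the interior). It does not: $\Phi(\xi)=|\xi|$ is even, convex, vanishes at $0$, and all its sublevel sets are closed balls containing $0$ in the interior — it meets every hypothesis you invoke — yet $\Psi(t)=t$ for all $t$, and the conclusion of the proposition itself fails for this $\Phi$ (take $U_\sigma=\sigma\,\mathds{1}_{E_\sigma}$ with $|E_\sigma|=1/\sigma$: all modulars equal $1$, but the family is not uniformly integrable). The missing ingredient is the $N$-function condition $\lim_{|\xi|\to\infty}\Phi(\xi)/|\xi|=\infty$, which the paper's definition of ``$n$-dimensional Young function'' does not impose but which is part of the definition of an $n$-dimensional $N$-function and is what is implicitly in force wherever the proposition is applied. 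Once you assume it, your plan closes: by convexity and $\Phi(0)=0$ the map $t\mapsto\Phi(t\omega)/t$ is nondecreasing for each unit vector $\omega$, and it tends pointwise to $\infty$; a Dini-type argument (if $\Phi(t_k\omega_k)/t_k\leq M$ with $t_k\to\infty$, pass to a convergent subsequence $\omega_k\to\omega_\infty$, use monotonicity and continuity of $\Phi$ to conclude $\Phi(s\omega_\infty)/s\leq M$ for all $s$, a contradiction) then yields the uniform superlinearity $\Psi(t)/t\to\infty$ over the sphere. So: right route, but you should be invoking the $N$-function hypothesis explicitly; compactness of sublevel sets alone cannot give superlinear growth.
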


\subsection{Functionals}
Let us prove the representation of functionals on $W^{1}_0 L^{\Phi}(\Omega)$.
\begin{lemma}\label{lem:distr}If  $\Omega\subset\rn$ is a bounded  domain %, $\Phi\in\Delta_2\cap\nabla_2$ near infinity, 
and $H\in (W^{1}_0L^{\Phi}(\Omega))'$, then there exists $\xi\in E^{\wt \Phi}(\Omega;\rn),$ such that\[\langle H, v\rangle=\int_\Omega \xi\cdot \nabla v\,dx\quad\text{for all}\quad v\in {W^{1}_0L^{\Phi}(\Omega)}\]
and $\|H\|_{(W^{1}_0L^{\Phi}(\Omega))'} =\|\xi\|_{ L^{\wt\Phi}(\Omega;\rn)}\,$.
\end{lemma}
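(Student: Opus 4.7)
The plan is to combine the Hahn--Banach theorem applied through the gradient embedding with the Orlicz duality~\eqref{Dual}, in the spirit of Gossez~\cite{Gossez}.

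By the very definition of the norm on $W^{1}_{0}L^\Phi(\Omega)$, the gradient map $T\colon v\mapsto \nabla v$ is a linear isometry onto a closed subspace $M\subset L^\Phi(\Omega;\R^n)$, so $H$ transports to a bounded linear functional $\tilde H_0(\nabla v):=H(v)$ on $M$ with $\|\tilde H_0\|_{M^*}=\|H\|$. I extend $\tilde H_0$ by Hahn--Banach to a norm-preserving $\tilde H\in (L^\Phi(\Omega;\R^n))'$. Its restriction $\tilde H|_{E^\Phi(\Omega;\R^n)}$ is continuous, so the componentwise isomorphism $(E^\Phi)'\cong L^{\wt\Phi}$ from~\eqref{Dual} delivers a unique $\xi\in L^{\wt\Phi}(\Omega;\R^n)$ with
\[
\tilde H(U)=\int_\Omega \xi\cdot U\,dx\qquad\text{for every }U\in E^\Phi(\Omega;\R^n),
\]
and $\|\xi\|_{L^{\wt\Phi}}\le\|H\|$. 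This already yields $H(\varphi)=\int_\Omega \xi\cdot\nabla\varphi\,dx$ on every $\varphi\in C_0^\infty(\Omega)$, since such test gradients are bounded, hence they lie in $E^\Phi(\Omega;\R^n)$.

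To promote this identity to all of $W^{1}_{0}L^\Phi(\Omega)$, given $v\in W^{1}_{0}L^\Phi$ I combine the symmetric truncations $T_jv\in W^{1}_{0}L^\Phi\cap L^\infty$ with the smooth modular approximation of Proposition~\ref{prop:approx-0} to obtain a sequence $v_j\in C_0^\infty(\Omega)$ converging modularly to $v$ in $W^{1}L^\Phi$; in particular $\nabla v_j\in E^\Phi(\Omega;\R^n)$. Proposition~\ref{prop:conv:mod-weak} then sends $\int_\Omega\xi\cdot\nabla v_j\,dx\to\int_\Omega\xi\cdot\nabla v\,dx$ along a subsequence against $\xi\in L^{\wt\Phi}$, while $H(v_j)=\int_\Omega\xi\cdot\nabla v_j\,dx$ by the previous paragraph. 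Passing to the limit on both sides provides the representation $H(v)=\int_\Omega \xi\cdot\nabla v\,dx$, and the norm identity $\|H\|=\|\xi\|_{L^{\wt\Phi}}$ follows by combining H\"older's inequality~\eqref{holder} with the Hahn--Banach preservation of norm.

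The hard part will be pinning $\xi$ to the smaller space $E^{\wt\Phi}$ rather than merely $L^{\wt\Phi}$. Since $\wt\Phi$ is a finite-valued $N$-function and $\Omega$ is bounded, the bounded truncations $\xi_k:=\xi\,\mathbf{1}_{\{|\xi|\le k\}}$ lie in $E^{\wt\Phi}$ and $\wt\Phi(\xi_k)\nearrow\wt\Phi(\xi)$ monotonically in $L^1(\Omega)$, so $\xi_k\to\xi$ modularly in $L^{\wt\Phi}$; what is needed is the upgrade to norm convergence. The standard remedy is to eliminate any singular Yosida--Hewitt component of the extension $\tilde H$ by exploiting that the functional $v\mapsto \int_\Omega \xi\cdot\nabla v\,dx$ constructed above is automatically modular-continuous on $W^{1}_{0}L^\Phi$, which forces $H$ itself to coincide with it modulo the singular piece, rules out the latter, and thereby locates $\xi$ in $E^{\wt\Phi}$ as claimed.
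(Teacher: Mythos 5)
Your route coincides with the paper's: transport $H$ along the gradient isometry to a closed subspace of $L^\Phi(\Omega;\R^n)$, extend by Hahn--Banach, restrict to $E^\Phi$ and invoke \eqref{Dual} to produce $\xi\in L^{\wt\Phi}(\Omega;\R^n)$ representing $H$ on $C_0^\infty(\Omega)$, then pass to all of $W^1_0 L^\Phi(\Omega)$ by modular approximation. Pre-truncating $v$ before applying Proposition~\ref{prop:approx-0} is reasonable (that proposition needs $v\in L^\infty$), whereas the paper reaches the same point via Proposition~\ref{prop:modulardensity}; this is a cosmetic difference.

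What does not work is your closing paragraph. You are right to flag that the construction a priori only places $\xi$ in $L^{\wt\Phi}$, but the sketch you offer for promoting $\xi$ to $E^{\wt\Phi}$ --- eliminating the Yosida--Hewitt singular component because the integral functional ``is automatically modular-continuous\,\dots\,which forces $H$ itself to coincide with it modulo the singular piece, rules out the latter'' --- is a restatement of what must be shown, not an argument. It also conflates two distinct issues: (i) whether the singular part of the Hahn--Banach extension annihilates the subspace $\nabla W^1_0 L^\Phi(\Omega)\subset L^\Phi(\Omega;\R^n)$, which is what is needed to pass from $C_0^\infty$ to general $v$ (your ``passing to the limit on both sides'' only controls $\int_\Omega\xi\cdot\nabla v_j\,dx$; the convergence $H(v_j)\to H(v)$ does not follow from modular convergence of $v_j$, since $H$ is merely norm-continuous and $\nabla v$ need not lie in $E^\Phi$); and (ii) whether the absolutely continuous density $\xi$ happens to lie in $E^{\wt\Phi}$, which is a separate property of $\xi$. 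Neither implies the other, and you establish neither. The paper is itself terse on these two points, but both issues evaporate in the regime $\Phi\in\Delta_2\cap\nabla_2$ near infinity where the lemma is actually invoked (Theorems~\ref{theo:decomp} and~\ref{theo:uniq-refl}): there $E^\Phi=L^\Phi$, $E^{\wt\Phi}=L^{\wt\Phi}$, $C_0^\infty$ is norm-dense in $W^1_0 L^\Phi(\Omega)$, and the singular part is zero. As written, your final paragraph announces the gap but does not close it; it should either be replaced by a genuine argument or the statement should be read in the doubling regime.
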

\begin{proof}As $\Omega$ is bounded, by Poincar\'e inequality~\eqref{norm-poinc} we can consider ${W^{1}_0L^{\Phi}(\Omega)}$ with gradient norm. Let us note that the map $Tu=\nabla u$ acting $T:{W^{1}_0L^{\Phi}(\Omega)}\to L^\Phi(\Omega;\rn)$ is an isotropy. We set $E=T\big( {W^{1}_0L^{\Phi}(\Omega)}\big)$, equip it with a norm of $L^\Phi(\Omega;\rn)$, and define $S=T^{-1}:E\to W^{1}_0L^{\Phi}(\Omega)$. The map $h\mapsto \langle H, Sh\rangle$ is a continuous linear functional on $E$, so Hahn--Banach theorem (Theorem~\ref{theo:hahn-banach}) and~\eqref{Dual} enable to extend it to a linear functional $\zeta\in E^{\wt \Phi}(\Omega;\rn)$ acting on whole $L^\Phi(\Omega;\rn)$ with $\|\zeta\|_{L^{\wt \Phi}(\Omega;\rn)}=\|H\|_{(W^{1}_0L^{\Phi}(\Omega))'}$. By the Riesz  representation theorem (Theorem~\ref{theo:riesz}) we know that there exists $\xi\in{E^{\wt\Phi}(\Omega;\rn)}$ such that\[\langle \zeta, v\rangle=\int_\Omega \xi\cdot \nabla v\,dx\quad\text{for all }\ v\in C_0^\infty(\Omega).\] 
By Propositions~\ref{prop:conv:mod-weak} and~\ref{prop:modulardensity} the result actually holds for $v\in W^{1}_0L^{\Phi}(\Omega)$. Of course, also  $\|H\|_{(W^{1}_0L^{\Phi}(\Omega))'} =\|\xi\|_{ L^{\wt\Phi}(\Omega;\rn)}$.
\end{proof}

\section{Uniqueness for measure data problems -- Proof of Theorem~\ref{theo:uniq}}\label{sec:uniq}

We  prove uniqueness  of very weak solutions to a broad class anisotropic measure data problem of elliptic type, whose existence is proven in~\cite{ACCZG}.

\begin{proposition}[Theorem~3.4 and 3.10,~\cite{ACCZG}]\label{prop:ex-sola}  $\opA:\Omega\times\rn\to\rn$ satisfies~\eqref{mon} and~\eqref{constants}  with some anisotropic $N$-function $\Phi:\rn\to\rp0$,  $\mu\in \Mb(\Omega)$ and $\Omega$ is a bounded Lipschitz domain in $\rn$, then there exists at least one approximable solution~\eqref{eq:intro}. 
\end{proposition}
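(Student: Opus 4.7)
The plan is to construct the approximable solution by following the approximation procedure built into its very definition. First I would fix a sequence $\{h_s\}\subset C_0^\infty(\Omega)$ converging weakly-$\ast$ to $\mu$ in the sense of measures (possible since $\Mb(\Omega) = (C_0(\Omega))'$ and $C_0^\infty(\Omega)$ is dense there in the relevant topology). For every fixed smooth $h_s$, existence of a weak solution $u_s\in W_0^{1}\mathcal{L}^\Phi(\Omega)$ to the regularized problem~\eqref{prob:trunc} follows from standard monotone operator theory transferred to the anisotropic Orlicz--Sobolev setting: the map $u\mapsto -\dv\,\opA(x,\nabla u)$ is strictly monotone by~\eqref{mon}, coercive by the lower bound in~\eqref{constants}, and the upper bound in~\eqref{constants} together with Young's inequality ensures $\opA(x,\nabla u)\in L^{\wt\Phi}(\Omega;\rn)$ whenever $\nabla u\in L^\Phi(\Omega;\rn)$. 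A Galerkin scheme on finite-dimensional subspaces together with Brouwer's fixed point theorem, and passage to the limit via Proposition~\ref{prop:delaVP} and Minty--Browder's monotonicity trick adapted to the modular topology of Propositions~\ref{prop:conv:mod-weak}--\ref{prop:approx-0}, yields the required $u_s$.

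Next I would extract uniform a priori estimates on truncations. Testing~\eqref{prob:trunc} with $T_k(u_s)\in W_0^1\mathcal{L}^\Phi(\Omega)\cap L^\infty(\Omega)$ (legal by Proposition~\ref{prop:approx-0}) and using the coercivity in~\eqref{constants} gives
\[\int_\Omega \Phi\bigl(c_1^\Phi\nabla T_k(u_s)\bigr)\,dx \leq k\,\sup_s\|h_s\|_{\Mb(\Omega)} \leq Ck,\]
with the last supremum finite because $h_s\to \mu$ weakly-$\ast$. Combined with the anisotropic Poincar\'e inequality~\eqref{norm-poinc} and the Sobolev embedding~\eqref{B-Wbis}, this delivers uniform modular bounds on $\nabla T_k(u_s)$ in $L^\Phi(\Omega;\rn)$ and on $T_k(u_s)$ in $L^{\Phi_n}(\Omega)$. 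The standard Marcinkiewicz--type consequence $|\{|u_s|>k\}|\to 0$ uniformly in $s$ as $k\to\infty$ then follows by testing on level sets.

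The technical core is the compactness step. Combining the above level--set decay with a second monotonicity test using $T_\eta(u_s-u_{s'})$ against the difference of the equations for $u_s$ and $u_{s'}$ yields Cauchy--in--measure for $\{u_s\}$ and, for each fixed $k$, for $\{\nabla T_k(u_s)\}$ as well. Upon passing to a subsequence we obtain $u_s\to u$ a.e., with $u\in \mathcal{T}_0^1 L^\Phi(\Omega)$, and a generalized gradient $\nabla u=\lim_k \nabla T_k(u)$ understood in the a.e. sense, where each $T_k(u)\in W_0^1 L^\Phi(\Omega)$ is the modular limit of $T_k(u_s)$.

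The hard part will be upgrading convergence of $\{\nabla T_k(u_s)\}$ from merely in measure to modular in $L^\Phi(\Omega;\rn)$, which is the minimal ingredient needed to identify $\opA(x,\nabla T_k(u_s))$ with $\opA(x,\nabla T_k(u))$ in the limit (through Proposition~\ref{prop:conv:mod-weak}) and thereby verify that $u$ satisfies the very weak formulation associated with $\mu$. This is the classical Boccardo--Murat--type argument: test the equation for $u_s$ with admissible functions of the form $T_\eta(T_k(u_s)-\varphi_j)$, where $\{\varphi_j\}\subset C_0^\infty(\Omega)$ approximates $T_k(u)$ modularly as provided by Proposition~\ref{prop:approx-0}, and use strict monotonicity~\eqref{mon} together with the uniform integrability from Proposition~\ref{prop:delaVP} to force the integrand $(\opA(x,\nabla T_k(u_s))-\opA(x,\nabla T_k(u)))\cdot(\nabla T_k(u_s)-\nabla T_k(u))$ to vanish in $L^1$. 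In the non-reflexive fully anisotropic Orlicz framework the subtlety lies in performing this test exclusively through modular, rather than norm, convergence, using Propositions~\ref{prop:modulardensity}--\ref{prop:approx} as the working substitutes for the missing reflexivity; once this modular convergence is in place, the limit passage in~\eqref{prob:trunc} against $\varphi\in C_0^\infty(\Omega)$ identifies $u$ as an approximable solution to~\eqref{eq:intro}.
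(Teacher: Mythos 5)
This proposition is not proved in the paper at all: it is imported verbatim from Theorems~3.4 and~3.10 of the cited reference, so there is no in-paper argument to compare against. Your sketch is, however, an accurate outline of the route taken in that source: Galerkin/Minty--Browder for the smooth-data approximate problems, a priori estimates on truncations by testing with $T_k(u_s)$, compactness via Cauchy-in-measure for $u_s$ and for $\nabla T_k(u_s)$, and a Boccardo--Murat-type argument upgrading the gradient convergence, carried out with modular rather than norm topology since the space is generally non-reflexive. One small remark on scope: the definition of approximable solution used here only demands $u_s \to u$ a.e.\ with $u \in \mathcal{T}^1_0 L^\Phi(\Omega)$, so the modular upgrade of $\nabla T_k(u_s)$ is not strictly necessary for the bare existence claim; it is, on the other hand, precisely what is needed to obtain the a priori estimates \eqref{bdd} that the present paper then invokes in the uniqueness proof, so your emphasis on that step is well placed.
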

In the construction in the original paper it is also shown that solutions to the approximate problems contructed therein satisfy the following property. If  we consider $f_s$ converging to $\mu$ weakly-$\ast$ in the space of measures  such that for every $s>0$ it holds that $\|f_s\|_{L^1(\Omega)}\leq 2\|\mu\|_{\Mb(\Omega)}$ and a
function $u_s$  is a weak solution to $-\dv\opA(x,\nabla u_s)=f_s$
and $ k\to\infty$, then  
%\begin{flalign}\label{conv:A-ae}&\text{$\opA(x,\nabla T_k(u_{s_m}))\to \opA(x,\nabla T_k(u))$ a.e. in $\Omega$},\\\
%&\int_\Omega\P \text{$ \nabla T_k(u_{s_m}) \to\nabla T_k(u)$ modularly in $L^\Phi(\Omega;\rn)$.}\label{conv:strong-grad}\end{flalign}
%
\begin{equation}
\label{bdd}
\begin{split}\|\nabla T_k(u_s)\|_{L^{\Phi}(\Omega;\rn)}\leq C_0 k\\
 \|\opA(x,\nabla T_k(u_s))\|_{L^{\wt \Phi}(\Omega;\rn)}\leq C_1 k,\\
 \| \opA(\cdot,\nabla T_k(u_s))\cdot\nabla T_k(u_s)\|_{L^1(\Omega)}\leq C_2 k,
\end{split}\end{equation}  where constants $C_0,C_1,C_2>0$ are dependent only on $c_1^\Phi,c_2^\Phi,c_3^\Phi,c_4^\Phi$and $\|\mu\|_{\Mb(\Omega)}$. In fact, in~\cite{ACCZG}  the proof  involves only $c_3^\Phi\in(0,1)$ and $c_1^\Phi,c_2^\Phi,c_4^\Phi=1$ in~\eqref{constants}, but minor modifications enable to justify existence and regularity also in this generality, see \cite{CGWKSG}.  This brings no novelty within the case $\Phi\in\Delta_2\cap\nabla_2$ near infinity, but substantially broadens the scope of investigated problems within the non-doubling regime. 

\medskip

We are in the position to show that if the measure admitts a special form, then the approximable solution $u$ does not depend on the choice of the approximate sequence.

\begin{proof}[Proof of Theorem~\ref{theo:uniq}]
 We suppose $v^1$ and $v^2$ are  solutions obtained as limits of different approximate problems. By assumption there exist sequences of bounded functions $\{f^1_s\}$, $\{ {G^1_s}\}$, $\{f^2_s\}$, $\{G^2_s\}$, such that $f^1_s\to f$ and $f_s^2\to f$ in $L^1(\Omega)$ and $G^1_s\to G$ and $G_s^2\to G$ modularly in $L^{\wt\Phi}(\Omega;\rn)$ and approximate weak solutions $v^j_s$ to~\eqref{eq:intro}, $j=1,2$, such that for a.e. in $\Omega$ we have both $v^1_s\to v^1$ and $v^2_s\to v^2$. Our aim is to prove that  then $v^1=v^2$ a.e. in $\Omega$ even if they are  obtained as limits of different sequences of approximate solutions.

 We fix arbitrary $t,l>0$, use $\phi=T_t(T_lv^1_s-T_lv^2_s)\in W_0^{1}L^{\Phi}(\Omega)\cap L^\infty(\Omega)$  as a test function in~\eqref{prob:trunc} for $v^1$ and $v^2$, and subtract the equations to obtain for every $s>0$\begin{flalign}\nonumber
&\int_{\{|T_lv^1_s-T_lv^2_s|\leq t\}}(\opA(x,\nabla v^1_s)- \opA(x,\nabla v^2_s) )\cdot( \nabla v^1_s-\nabla v^2_s)\,dx\\
&=\int_\Omega (f^1_s-f^2_s)T_t(T_lv^1_s-T_lv^2_s)\,dx+\int_\Omega (G^1_s-G^2_s)\cdot\nabla T_t(T_lv^1_s-T_lv^2_s)\,dx\ =:\, R_s^1+R_s^2.
\label{diff:u-bu}
\end{flalign} The right-hand side above tends to $0$. Indeed, the convergence of $R_s^1$ holds because $|T_t(T_lv^1_s-T_lv^2_s)|\leq t$ and for $s\to 0$ we have $ f^1_s-f^2_s\to 0$ in $L^1(\Omega)$. As for $R_s^2$ it suffices to note that
\begin{flalign*}|R_s^2|&=\left|\int_{\{|T_lv^1_s-T_lv^2_s|\leq t\}}(G^1_s-G^2_s)\cdot\nabla T_lv^1_s\,dx-\int_{\{|T_lv^1_s-T_lv^2_s|\leq t\}}(G^1_s-G^2_s)\cdot\nabla T_lv^2_s\,dx\right|\\
&\leq \left|\int_\Omega(G^1_s-G^2_s)\cdot\nabla T_lv^1_s\,dx\right|+\left|\int_\Omega (G^1_s-G^2_s)\cdot\nabla T_lv^2_s \,dx\right|,
\end{flalign*}  recall the modular convergence of $(G^1_s-G^2_s)\to 0$ in $L^{\wt\Phi}(\Omega;\rn)$, boundedness of the sequence $\{\nabla T_lv^j_s\}_s$ $(j=1,2)$ in $L^{\Phi}(\Omega;\rn)$, and Proposition~\ref{prop:conv:mod-weak}. The left-hand side of~\eqref{diff:u-bu} is nonnegative by monotonicity of $\opA$. Furthermore, by \eqref{bdd} and Fatou's Lemma as $R_s^1+R_s^2\to 0$, we get
\[0\leq
\int_{\{|T_lv^1-T_lv^2 |\leq t\}}(\opA(x,\nabla v^1)-\opA(x,\nabla v^2 ))\cdot( \nabla v^1 -\nabla v^2 )\,dx\leq 0.\]
Consequently,  $\nabla v^1 =\nabla v^2$ a.e. in $\{|T_l v^1 -T_l v^2 |\leq t\}$ for every $t,l>0$, and so for every $k>0$\begin{equation*}
%\label{nau=nabu}
\nabla T_k v^1 =\nabla T_k v^2\quad\text{ a.e. in }\Omega. 
\end{equation*} 
Given the boundary value is the same also $v^1= v^2$ a.e. in $\Omega$. 
\end{proof}

\section{Fully anisotropic capacities  and capacitary measure characterization}\label{sec:cap} Capacities are typically used to describe fine properties of Sobolev functions.   We present the generalization of classical notions of capacities, cf.~\cite{AH,hekima,Ma,Re,Zie}, to anisotropic doubling setting, that is when
\[\Phi\in\Delta_2\cap\nabla_2\quad\text{ near infinity.}\]Mind that due to~\eqref{W1P} in this section we denote an Orlicz--Sobolev space by $W^{1,\Phi}(\Omega)$.

 Let us refer to similar studies of~\cite{MSZ} developed within isotropic Orlicz spaces, but also to \cite{ks} in the variable exponent setting, \cite{Tuo} for related study on metric spaces and~\cite{bahaha} for isotropic considerations in inhomogeneous Musielak--Orlicz spaces.

\medskip

Capacities will be defined by the means of the functional\begin{equation}
\label{Phi-func}
W^{1,\Phi}(\Omega)\ni\vp\mapsto\FPh[\vp]=
\int_\rn \Phi_\circ(\kappa |\vp|)+\Phi(\nabla \vp)\,dx
\end{equation}
with $\Phi_\circ$ from~\eqref{phistar} and $\kappa=c(n,|\Omega|)>0$ from Poincar\'e inequality~\eqref{anisopoinc}. 

\subsection{Sobolev capacity}\label{ssec:sob-cap}
For a set $E\subset\rn$ we define the set of test functions
\[S_\Phi(E)=\left\{ \vp\in W^{1,\Phi}(\rn): \ {\rm int}\, E\subset\{\vp> 1\}, \ \vp\geq 0 \text{ on } \rn\right\}\]
and its Sobolev anisotropic capacity by
\[\capP (E)=\inf\left\{\FPh[\vp]
:\ \vp\in S_\Phi(E)\right\}. \] 
This definition ensures customary properties of capacity. Before their proof let us state the following lemma.
\begin{lemma}\label{lem:grad-min}  If $u, v\in W^{1,\Phi} (\Omega)$, then $\max\{u, v\}$ and
$\min\{u, v\}$ are in $W^{1,\Phi} (\Omega)$ with\[
\nabla\max\{u, v\}(x) =\begin{cases}
\nabla u(x) \text{ a.e. in } \{u\geq v\},\\
\nabla v(x) \text{ a.e. in } \{v \geq u\},\end{cases} 
\nabla \min\{u, v\}(x) =\begin{cases}
\nabla u(x) \text{ a.e. in } \{u \leq v\},\\
\nabla v(x) \text{ a.e. in } \{v \leq u\}.\end{cases}\]
In particular, $|u|\in W^{1,\Phi} (\Omega)$.
\end{lemma}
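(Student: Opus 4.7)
The plan is to reduce both claims to showing that for every $w\in W^{1,\Phi}(\Omega)$,
\[
|w|\in W^{1,\Phi}(\Omega)\qquad\text{with}\qquad \nabla |w| = \sgn(w)\,\nabla w\ \text{ a.e.},
\]
under the convention $\sgn(0)=0$. From the identities
\[
\max\{u,v\}=\tfrac{1}{2}(u+v+|u-v|),\qquad \min\{u,v\}=\tfrac{1}{2}(u+v-|u-v|),
\]
applied with $w=u-v$, one obtains $\max\{u,v\},\min\{u,v\}\in W^{1,\Phi}(\Omega)$ together with explicit formulas for their weak gradients. Splitting $\Omega$ into $\{u>v\}$, $\{u<v\}$, and $\{u=v\}$, and invoking the classical fact that $\nabla(u-v)=0$ a.e.\ on the level set $\{u=v\}$ (hence $\nabla u=\nabla v$ a.e.\ there), the two piecewise descriptions stated in the lemma follow; the final assertion about $|u|$ is the particular case $v\equiv 0$.

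To handle $|w|$ itself, I would use the smooth regularizations $G_\varepsilon(t)=\sqrt{t^2+\varepsilon^2}-\varepsilon$, which satisfy $|G_\varepsilon(t)|\leq|t|$, $|G_\varepsilon'(t)|\leq 1$, $G_\varepsilon(t)\to|t|$, and $G_\varepsilon'(t)\to\sgn(t)$ pointwise (the latter off the origin). Since Section~\ref{sec:cap} is set under $\Phi\in\Delta_2\cap\nabla_2$ near infinity, $W^{1,\Phi}(\Omega)$ is reflexive, modular and norm convergence coincide, and smooth functions are norm-dense (Meyers--Serrin-type density; cf.\ Proposition~\ref{prop:approx} for the boundary-valued analogue). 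Fix $w\in W^{1,\Phi}(\Omega)$ and pick smooth $w_k\to w$ in $W^{1,\Phi}(\Omega)$. For each $w_k$ the classical chain rule gives $G_\varepsilon(w_k)\in C^\infty(\Omega)$ with $\nabla G_\varepsilon(w_k)=G_\varepsilon'(w_k)\nabla w_k$, and the bound $|G_\varepsilon'|\leq 1$ together with modular convergence of $\nabla w_k$ and the $\Delta_2$-condition permits passing to the limit $k\to\infty$, yielding $G_\varepsilon(w)\in W^{1,\Phi}(\Omega)$ with $\nabla G_\varepsilon(w)=G_\varepsilon'(w)\nabla w$.

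Letting $\varepsilon\to 0$, one has $G_\varepsilon(w)\to|w|$ and $G_\varepsilon'(w)\nabla w\to\sgn(w)\nabla w$ pointwise a.e.\ (on $\{w=0\}$ both limits vanish because $\nabla w=0$ a.e.\ there). The dominations $|G_\varepsilon(w)|\leq|w|$ and $|G_\varepsilon'(w)\nabla w|\leq|\nabla w|$, combined with the doubling of $\Phi$ (hence absolute continuity of the modular), allow passage to the limit in the modular topology of $L^{\Phi_\circ}(\Omega)\times L^\Phi(\Omega;\rn)$; in the doubling regime this is equivalent to norm convergence. Closedness of the distributional gradient then identifies $\nabla|w|=\sgn(w)\nabla w$, settling the $|w|$ case.

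The main obstacle is the chain-rule step for $G_\varepsilon$: in the fully anisotropic Orlicz setting there is no off-the-shelf Lipschitz composition theorem, so one has to weld together smooth-function density with modular dominated convergence by hand. The approximation scheme above is tailored precisely to exploit the $\Delta_2\cap\nabla_2$ hypothesis in force throughout Section~\ref{sec:cap}, under which smooth density, modular equivalence with norm convergence, and dominated convergence in the modular are all available. The remaining ingredients---the max/min identities, vanishing of $\nabla w$ on $\{w=0\}$, and the piecewise gradient formula---are routine consequences once the $|\cdot|$ case is secured.
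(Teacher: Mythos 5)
Your proposal is correct and follows essentially the same route the paper endorses: the paper's proof simply states ``The proof is exactly like in the isotropic case~\cite{MSZ},'' and what you have written out is precisely that classical argument (reduce to $|w|$ via the $\max/\min$ identities, smooth regularization $G_\varepsilon$, density of smooth functions under $\Delta_2\cap\nabla_2$, and the a.e.\ vanishing of $\nabla w$ on $\{w=0\}$) transcribed to the anisotropic doubling setting, so you have supplied the details that the paper leaves to the reference.
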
 
\begin{proof}
The proof is exactly like in the isotropic case~\cite{MSZ}.
\end{proof}

\begin{lemma}\label{lem:basic-1}
Sobolev capacity $\capP$ has the following basic properties.
\begin{itemize}
\item[(i)] $\capP(\emptyset)=0.$
\item[(ii)] If $E_1\subset E_2$, then $\capP(E_1)\leq\capP(E_2).$
\item[(iii)] If $E_1,E_2\subset\rn$, then $\capP(E_1\cup E_2)+\capP(E_1\cap E_2)\leq\capP(E_1)+\capP(E_2)$.
\item[(iv)] If $E_1\subset E_2\subset E_3\subset\dots\subset\rn$ are arbitrary sets, then  $\lim_{i\to\infty} \capP(E_i)= \capP(\cup_{i=1}^{\infty}E_i);$
\item[(v)] If $K_1\supset K_2\supset K_3\supset\dots$ are compact sets, then  $\lim_{i\to\infty} \capP(K_i)= \capP(\cap_{i=1}^{\infty}K_i);$
\item[(vi)]  For a countable family of sets $\{E_i\}_i\subset\rn$ we have $\capP(\cup_{i=1}^\infty E_i)\leq\sum_{i=1}^\infty\capP(E_i)$.
\end{itemize}
\end{lemma}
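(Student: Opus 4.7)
The plan is to dispatch the six items following the classical Choquet scheme for Sobolev capacities, invoking Lemma~\ref{lem:grad-min} for lattice operations on $W^{1,\Phi}(\rn)$ and reflexivity (granted by $\Phi\in\Delta_2\cap\nabla_2$) for limit passages. Items (i) and (ii) are immediate: since ${\rm int}\,\emptyset=\emptyset$, the zero function is admissible with $\FPh[0]=0$; and $E_1\subset E_2$ forces ${\rm int}\,E_1\subset {\rm int}\,E_2$, whence $S_\Phi(E_2)\subset S_\Phi(E_1)$ and monotonicity follows.

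The algebraic core is the strong subadditivity (iii). Fix $\e>0$, pick $\vp_j\in S_\Phi(E_j)$ with $\FPh[\vp_j]\le \capP(E_j)+\e$, and observe that by Lemma~\ref{lem:grad-min}, $\max\{\vp_1,\vp_2\}$ and $\min\{\vp_1,\vp_2\}$ belong to $W^{1,\Phi}(\rn)$. Splitting the integrals over $\{\vp_1\ge \vp_2\}$ and $\{\vp_2\ge\vp_1\}$ yields the identity
\[
\FPh[\max\{\vp_1,\vp_2\}]+\FPh[\min\{\vp_1,\vp_2\}]=\FPh[\vp_1]+\FPh[\vp_2].
\]
Since $\max$ and $\min$ are admissible competitors for $E_1\cup E_2$ and $E_1\cap E_2$ respectively (the latter using ${\rm int}\,(E_1\cap E_2)={\rm int}\,E_1\cap{\rm int}\,E_2$, the former using that any point of ${\rm int}\,(E_1\cup E_2)\setminus({\rm int}\,E_1\cup{\rm int}\,E_2)$ is a $\partial E_1\cap\partial E_2$-point that is captured after inflating the level $1$ by an arbitrarily small $\delta>0$), letting $\e\to 0$ gives (iii). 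Iterating yields $\capP(\bigcup_{i=1}^k E_i)\le\sum_{i=1}^k\capP(E_i)$, which combined with (iv) delivers the countable subadditivity (vi).

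For the increasing limit (iv), monotonicity supplies $\lim_i\capP(E_i)\le \capP(\bigcup_i E_i)$; assume the left-hand side is finite and pick near-minimizers $\vp_i\in S_\Phi(E_i)$. The family $\{\vp_i\}$ is bounded in the reflexive Banach space $W^{1,\Phi}(\rn)$, so a subsequence converges weakly to some $\vp$. Mazur's lemma produces convex combinations converging strongly and hence modularly, and weak lower semicontinuity of the convex modular $\FPh$ gives $\FPh[\vp]\le \liminf_i\FPh[\vp_i]=\lim_i\capP(E_i)$. A mild truncation of each $\vp_i$ strictly above $1$ on a neighborhood of ${\rm int}\,E_i$, which does not increase $\FPh$ by the doubling assumption, makes the level-set admissibility condition stable under the modular limit, so $\vp\in S_\Phi(\bigcup_i E_i)$ and the desired bound follows.

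Item (v) is then a finite-intersection compactness argument. Pick $\vp\in S_\Phi(\bigcap_i K_i)$ with $\FPh[\vp]\le \capP(\bigcap_i K_i)+\e$; the open set $U=\{\vp>1\}$ is a neighborhood of $\bigcap_i K_i$ after the same truncation used above. The nested compacts $K_i\setminus U$ form a decreasing family with empty intersection, so $K_i\subset U$ for all $i$ large, giving $\vp\in S_\Phi(K_i)$ and $\capP(K_i)\le \capP(\bigcap_i K_i)+\e$. The principal obstacle throughout lies in (iv): reconciling the weak limit of the competitors with the pointwise admissibility condition $\{\vp>1\}\supset {\rm int}\,E$, which is not preserved by weak convergence alone; the truncation-plus-Mazur combination is precisely where the $\Delta_2\cap\nabla_2$ hypothesis is used, since it ensures that truncation and modular convergence interact cleanly with $\FPh$.
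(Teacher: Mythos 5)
The items (i), (ii), (iii), and (vi) follow the paper's line, and (v) is the same compactness argument with a cleaner phrasing. The genuine gap is in (iv), and it is serious.

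Your strategy for (iv) is to take near-minimizers $\vp_i\in S_\Phi(E_i)$, extract a weak limit $\vp$ by reflexivity, estimate $\FPh[\vp]\le\liminf_i\FPh[\vp_i]$ by lower semicontinuity, and then argue admissibility of $\vp$ for $\bigcup_i E_i$ via a ``mild truncation'' combined with Mazur's lemma. This does not close the argument. Convex combinations via Mazur give functions $w_k$ that exceed $1$ on the finite intersections $\bigcap_{j\in I_k}\{\vp_j>1\}$, but those sets move with $k$, so strong (even a.e.\ subsequential) convergence does not deliver a \emph{fixed} open set containing $\bigcup_i E_i$ on which the limit exceeds $1$. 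Replacing $\vp_i$ by something ``strictly above $1$ on a neighborhood of ${\rm int}\,E_i$'' does not repair this and is not really a truncation; the genuine issue is that the level-set constraint is not stable under weak limits of a sequence whose admissibility sets change along the sequence. The hypothesis $\Phi\in\Delta_2\cap\nabla_2$ does not save you here: the obstruction is not modular/norm compatibility but pointwise admissibility of the limit.

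The paper resolves this with a device you did not use: set $v_j=\max\{u_1,\dots,u_j\}$, and iterate the strong-subadditivity identity~\eqref{FPh-sum} to obtain
\[
\FPh[v_j]+\capP(E_{j-1})\leq\FPh[v_{j-1}]+\FPh[u_j],
\]
hence $\FPh[v_j]\leq\capP(E_j)+\ve$. This accomplishes two things at once that your argument is missing. First, it controls the cost of $v_j$, which is \emph{not} a near-minimizer for $E_j$ and whose energy would otherwise be unbounded. Second, and crucially, the sequence $\{v_j\}$ is pointwise nondecreasing, so its pointwise limit $w$ exists and satisfies $w\geq v_j>1$ on the open set ${\rm int}\{u_j>1\}\supset E_j$, for every $j$; the union of these fixed open sets contains $\bigcup_i E_i$, giving admissibility of $w$ directly, with no appeal to weak or norm limits for the level-set condition. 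The lower-semicontinuity step is then applied to $\{v_j\}$ (the paper uses local weak $W^{1,1}$ convergence from Propositions~\ref{prop:approx} and~\ref{prop:delaVP}, though your reflexivity route would also serve) and one identifies the weak limit with $w$. To repair your proof, you should replace the direct use of the near-minimizers $\vp_i$ in the limit passage by this monotone lattice-envelope construction, and replace ``truncation stabilizes the level set'' by the monotone pointwise limit argument; otherwise (iv) remains unproved, and with it (vi), which you correctly derive from (iii) and (iv).

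One minor remark on (iii): your worry about points of ${\rm int}\,(E_1\cup E_2)$ not covered by ${\rm int}\,E_1\cup{\rm int}\,E_2$ dissolves once one reads the admissibility condition the way the paper actually uses it in the proof, namely $E\subset{\rm int}\{\vp>1\}$, for which $\max\{\vp_1,\vp_2\}$ and $\min\{\vp_1,\vp_2\}$ are admissible for $E_1\cup E_2$ and $E_1\cap E_2$ with no delta-inflation needed.
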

\begin{proof} Properties (i) and (ii)  result directly from the definition of $\capP$. As for (iii) we consider $\Omega=E_1\cup E_2$, $0\leq u,v\in W^{1,\Phi}(\Omega)$, such that $E_1\subset{\rm int }\{u> 1\}$ and $E_2\subset{\rm int}\{v>1\}$. Recall that Lemma~\ref{lem:grad-min} ensures then that $\max\{u,v\},\min\{u,v\}\in W^{1,\Phi}(\Omega)$. Let $A=\{u\geq v\}$ and $B=\{u<v\}$. We observe that
\[\FPh [\max\{u,v\}]=\FPh [u\mathds{1}_A]+\FPh [v\mathds{1}_B]\qquad\text{and}\qquad\FPh [\min\{u,v\}]=\FPh [u\mathds{1}_B]+\FPh [v\mathds{1}_A].\]
Then also\begin{equation}
\label{FPh-sum}
\FPh [\max\{u,v\}]+\FPh [\min\{u,v\}]=\FPh[u]+\FPh [v]
\end{equation}
and (iii) follows.

Let us pass to (iv). Since implication $\lim_{i\to\infty} \capP(E_i)\leq \capP(\cup_{i=1}^{\infty}E_i)$ is straightforward, we just concentrate on the reverse one. We choose $0\leq u_i\in S_\Phi(E_i)$ such that  $\FPh [u_i]\leq \capP(E_i)+\tfrac{\ve}{2^i}$. Let $v_j=\max\{u_1,\dots,u_j\}$. Then $v_j=\max\{v_{j-1},u_j\}$ and $\min\{v_{j-1},u_j\}=1$ inside $E_{j-1}$. Due to~\eqref{FPh-sum} we get
\begin{flalign*}
\FPh [v_j]+\capP(E_{j-1})&\leq 
\FPh [\max\{v_{j-1},u_j\}]+\FPh [\min\{v_{j-1},u_j\}]=\FPh [v_{j-1}]+\FPh [u_j]\\
&\leq \FPh [v_{j-1}] +\capP(E_{j})+\tfrac{\ve}{2^i}.
\end{flalign*}
Iterating the argument we obtain that for every $j$
\[\FPh[v_j]\leq\capP(E_j)+\sum_{i=1}^j\tfrac{\ve}{2^i}<\lim_{i\to\infty}\capP(E_i)+\ve.\] Without loss of generality we may assume that $\lim_{i\to\infty}\capP(E_i)<\infty$ and, consequently, $v_j\in W^{1,\Phi}(\Omega)$. Due to Propositions~\ref{prop:approx} and~\ref{prop:delaVP} there is a subsequence (still called) $\{v_j\}_j$ converging locally weakly in $W^{1,1}(\Omega)$ and strongly in $L^1(\Omega)$ to a certain $v\in W^{1,\Phi}(\Omega)$. By convexity of $\Phi$ we have lower-semicontinuity of $\FPh$, so we get further that
\[\FPh [v]\leq
\liminf_{j\to\infty}\FPh [v_j]\leq\lim_{i\to\infty}\capP(E_i)+\ve.\]
When we define $w=\lim_{j\to\infty}v_j$, then $\cup_{i=1}^\infty E_i\subset{\rm int}\{w=1\}.$ By the uniqueness of the weak limit we infer that $v=w$. Moreover, $v=1$ on $\cup_{i=1}^\infty E_i$ and\[ \capP(\cup_{i=1}^\infty E_i)\leq\FPh [v]\leq \lim_{i\to\infty}\capP(E_i)+\ve.\]

Property (v) follows from the definition as well. In fact it suffices to consider an open set $U\supset\cap_{i=1}^{\infty}K_i$. Then by (ii) we have\[\capP(K)\leq \capP(K_i)\leq\capP(U)\]
and we conclude by taking and infimum over all such $U$.

To prove (vi) we use (iii) and inductional argument to arrive at inequality for any finite family $\{E_i\}_{i=1}^k$, that is $\capP(\cup_{i=1}^k E_i)\leq\sum_{i=1}^k\capP(E_i).$ Then by (iv) we have
\[\capP(\cup_{i=1}^k E_i)\leq\capP(\cup_{i=1}^k E_i)\leq\sum_{i=1}^k\capP(E_i)\leq\sum_{i=1}^k\capP(E_i).\]
\end{proof}
\begin{lemma}\label{lem:cap-open}For any $A\subset\rn$ one has
\[\capP(A)=\inf\{\capP(U):\ \ U\subset\rn\ \text{ is open and }\ A\subset U\}.\]
\end{lemma}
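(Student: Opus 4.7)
The easy direction comes for free: by monotonicity (Lemma~\ref{lem:basic-1}(ii)) we have $\capP(A)\leq \capP(U)$ for every open $U\supset A$, hence $\capP(A)\leq \inf\{\capP(U):\ U\supset A,\ U \text{ open}\}$. So the whole game is to show the reverse inequality: given $\varepsilon>0$, produce an open $U\supset A$ with $\capP(U)\leq \capP(A)+\varepsilon$.

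My plan is to start from a near-optimal test function $\vp\in S_\Phi(A)$ with $\FPh[\vp]\leq \capP(A)+\varepsilon/3$, and enlarge its super-level set $\{\vp>1\}$ in two steps. First, mildly dilate: for $\lambda\in(0,1)$ set $\vp_\lambda=\vp/(1-\lambda)$. Since $\Phi\in\Delta_2\cap\nabla_2$ near infinity, the map $\lambda\mapsto \FPh[\vp_\lambda]$ is continuous at $\lambda=0$ by dominated convergence (the doubling condition furnishes the dominating function). Hence $\lambda$ can be taken small enough so that $\FPh[\vp_\lambda]\leq \capP(A)+2\varepsilon/3$, while gaining the crucial inclusion $\text{int}\,A\subset\{\vp>1\}\subset \{\vp>1-\lambda\}=\{\vp_\lambda>1\}$.

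Second, pass from a level set of a Sobolev function to a genuinely open set. Here I use the quasi-continuous representative of $\vp$ provided by Proposition~\ref{prop:quasicont}: for every $\delta>0$ there is an open set $G$ with $\capP(G)<\delta$ such that $\vp\mid_{\rn\setminus G}$ is continuous, hence $\{\vp>1-\lambda\}\setminus G$ is relatively open in $\rn\setminus G$ and equals $V\cap(\rn\setminus G)$ for some open $V\subset \rn$. Define $U:=V\cup G$; this is open, and on $\rn\setminus G$ it contains $\{\vp>1-\lambda\}\supset \text{int}\,A$. Choosing $G$ to also cover the residual boundary points of $A$ where the quasi-continuous representative fails to satisfy $\vp\geq 1$ (a set of zero capacity by the definition of $S_\Phi$) yields $A\subset U$. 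Finally, the test function $\vp_\lambda+\mathbf{1}_G$-style modification (or more precisely, a cutoff added to $\vp_\lambda$ supported in $G$ whose $\FPh$-contribution is at most $\capP(G)+$ small) shows that $U$ itself is testable, and by subadditivity (Lemma~\ref{lem:basic-1}(vi)) together with the choice of $\delta<\varepsilon/3$ we get $\capP(U)\leq \FPh[\vp_\lambda]+\capP(G)\leq \capP(A)+\varepsilon$.

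The main obstacle is the delicate definition of $S_\Phi$, which only constrains $\text{int}\,A$ rather than $A$ itself. Bridging the gap between $\text{int}\,A$ and $A$ requires the quasi-continuous representative from Proposition~\ref{prop:quasicont}, together with the fact that $A\setminus\text{int}\,A$ can be covered by an open set of arbitrarily small capacity without altering the capacity estimate. The rest is bookkeeping: dilation kills the strict-inequality issue, continuity of $\FPh$ under dilation kills the capacity error, and subadditivity absorbs the small-capacity correction term.
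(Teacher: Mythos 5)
Your proposal takes a genuinely different route from the paper. The paper's proof extracts a weak limit $u$ from a near-optimal sequence $u_i\in S_\Phi(A_i)$, passes by Mazur's lemma to strongly convergent convex combinations $v_j$, and then builds a telescoping envelope $w_j=v_j+\sum_{i\geq j}|v_{i+1}-v_i|$ that dominates the entire tail $\{v_k\}_{k\geq j}$ and hence is $\geq 1$ on the open union $W_j=\bigcup_{i\geq j}V_i\supset A$; the approximating open sets are these $W_j$. You instead work with a single near-optimal test function, dilate it to create slack above level $1$, and invoke the quasicontinuous representative (Proposition~\ref{prop:quasicont}) to convert a superlevel set into an open set. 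Your route is arguably more transparent, but both are longer than needed once one adopts the reading of $S_\Phi$ that the paper's own proof relies on (``$u_i\geq 1$ a.e.\ in some open $U_i\supset A_i$''): under that reading $U:=\mathrm{int}\{\vp\geq 1\}$ is already open, contains $A$, and $\vp\in S_\Phi(U)$, so $\capP(U)\leq\FPh[\vp]$ without dilation, quasicontinuity, or Mazur's lemma.

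The genuine gap is the parenthetical ``a set of zero capacity by the definition of $S_\Phi$.'' The displayed definition of $S_\Phi(A)$ only imposes $\mathrm{int}\,A\subset\{\vp>1\}$ and says nothing about $A\setminus\mathrm{int}\,A$; this residual set is completely unconstrained and can perfectly well have positive capacity (indeed, if $A$ has empty interior the constraint is vacuous and $A\setminus\mathrm{int}\,A=A$). Consequently nothing forces the bad set where the quasicontinuous representative satisfies $\vp<1$ to have small capacity, so you cannot enlarge $G$ to cover it, and $A\subset U$ is not established. The step would go through under the stronger reading of $S_\Phi$ used implicitly in the paper's proof, but then, as noted above, the whole dilation/quasicontinuity apparatus becomes superfluous. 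As it stands, this sentence carries the entire weight of the $\mathrm{int}\,A$ versus $A$ mismatch — the very difficulty you correctly flag at the end — and it is unsupported; you either need to argue it from the actual definition (which I do not see how to do, and which would in fact make the lemma false when $A$ has empty interior and positive capacity) or explicitly pass to the stronger definition and then discard most of your machinery.
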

\begin{proof}
We fix $A\subset\rn$ and denote an increasing sequence of sets $\{A_i\}\subset\rn$ such that $A=\cup_{i=1}^\infty A_i$. Let
\[s:=\lim_{i\to\infty}\capP(A_i)\leq \capP(A)\in[0,\infty].\]
To prove the converse inequality we may assume $s<\infty$. For every $i$ we take $u_i\in S_\Phi(A_i)$ such that 
\[\FPh[u_i]\leq \capP(A_i)+\tfrac 1i.\]
Then, since $\Phi$ is doubling, $\{u_i\}$ is a bounded sequence in a reflexive Banach space and, hence, it has a weakly converging subsequence. Let $u$ be its weak limit. By Mazur's Lemma  (Theorem~\ref{theo:mazur}) there exists a sequence $\{v_j\}\subset W^{1,\Phi}(\rn)$ of finite convex combinations of $u_i,$ $i\geq j$ converging strongly to $u$. By convexity of $\FPh$ get that
\[\FPh[v_j]\leq\inf_{i\geq j} \FPh[u_i]\leq s+\tfrac 1i.\] 
Since $u_i\geq 1$ a.e. in some open $U_i\supset A_i$, we construct $V_j$ where $v_j\geq 1$. In fact, we can consider a finite intersection of $U_i$, $i\geq j$. We take a (non-relabelled) subsequence of $v_i$, such that
\[\|v_{j+1}-v_j\|_{W^{1,\Phi}}\leq \tfrac{1}{2^j}.\]
Let us define
\[w_j=v_j+\sum_{i=j}^\infty |v_{i+1}-v_i|\geq v_j+\sum_{i=j}^{k-1} (v_{i+1}-v_i)=v_k\quad\text{for }\ k\geq j\]
and notice that $\{w_j\}\subset W^{1,\Phi}(\rn),$ $w_j\geq 1$ in an opec set $W_j=\cup_{i=j}^\infty V_i\supset A$ and $w_j\to u$ in $W^{1,\Phi}(\rn)$. Since also $v_j\to u$ in $W^{1,\Phi}(\rn)$ we have
\[\capP(A)\leq \lim_{j\to\infty} \FPh[w_j]=\FPh[u]=\lim_{j\to\infty} \FPh[v_j]=\lim_{i\to\infty}\capP(A_i).\]
\end{proof}
By the analogical contruction we can justify with the following conclusion.
\begin{lemma}[Choquet's property]\label{lem:choquet} For an arbitrary set $E\subset\rn$ we have\[\capP(E)=\sup\{\capP(K):\quad K\text{ is compact in } E\}.\] 
\end{lemma}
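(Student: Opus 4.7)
The plan is to deduce Choquet's property from the abstract capacitability theorem of Choquet, since the required axioms have already been verified for $\capP$ in Lemma~\ref{lem:basic-1}. The direction $\sup\{\capP(K) : K \text{ compact in } E\} \leq \capP(E)$ is immediate from the monotonicity (ii); only the reverse inequality demands work.

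First, I would observe that $\capP$ is a Choquet capacity on the Polish space $\R^n$: it is monotone (Lemma~\ref{lem:basic-1}(ii)), continuous along arbitrary increasing sequences (Lemma~\ref{lem:basic-1}(iv)), and upper-continuous along decreasing sequences of compacta (Lemma~\ref{lem:basic-1}(v)). These are precisely the three axioms of the abstract capacitability framework.

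Second, I would handle the case that $E$ is open directly and without appeal to the abstract theorem, since this is a clean warm-up that will be the base case for the diagonal argument. Given $E$ open, exhaust it by
\[
K_i \;=\; \bigl\{x \in E : \dist(x, E^c) \geq 1/i,\ |x| \leq i\bigr\},
\]
so that $K_i$ is compact, $K_i \subset K_{i+1}$, and $\cup_i K_i = E$. Applying Lemma~\ref{lem:basic-1}(iv) with $E_i = K_i$ yields $\capP(K_i) \nearrow \capP(E)$, which settles the claim for open sets.

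Third, for a general (more precisely, Suslin, hence in particular Borel) set $E$, I would invoke the Choquet capacitability theorem whose conclusion is exactly the desired equality under the three axioms above. The main obstacle, and the reason why outer regularity alone is not enough, is that a compact set sitting inside an open neighborhood $U \supset E$ produced by Lemma~\ref{lem:cap-open} need not sit inside $E$; so the naive chain "$E \subset U$, $K \subset U$ compact with $\capP(K) \approx \capP(U)$" fails to deliver a compact subset of $E$. The resolution, entirely analogous in spirit to the diagonal construction used in the proof of Lemma~\ref{lem:cap-open} (choose almost-extremal approximants, pass to a weak limit via Mazur's lemma, and correct by a rapidly decaying tail $\sum |v_{i+1} - v_i|$), is packaged into Choquet's theorem via the Suslin scheme representation of~$E$; the two-sided approximation by open neighborhoods and compact subcores is iterated simultaneously along the Suslin tree, and the axioms (ii), (iv), (v) ensure that the resulting compact set realizes the capacity of $E$ up to any prescribed $\ve$.
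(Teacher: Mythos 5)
You take a genuinely different route. The paper's proof is a single sentence referring back to the construction of Lemma~\ref{lem:cap-open} (almost-extremal competitors, a weak limit extracted from reflexivity, Mazur's lemma, and a rapidly-decaying tail correction), i.e.\ a direct argument internal to the reflexive Orlicz--Sobolev framework. You instead read properties (ii), (iv), (v) of Lemma~\ref{lem:basic-1} as the axioms of an abstract Choquet capacity, treat the open case by exhaustion via (iv), and delegate the general case to Choquet's capacitability theorem. This is cleaner and more standard in potential theory; either route is acceptable for what the paper needs.

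There is, however, a scope mismatch that should not be left buried in a parenthetical. Choquet's capacitability theorem yields the equality for Suslin (in particular Borel) sets, whereas the lemma is stated for an arbitrary $E\subset\rn$. Your closing sentences, which gesture at a diagonal construction iterated along the Suslin tree, are informal commentary on the black box rather than an argument, and they do not extend the conclusion beyond Suslin sets. You should either restrict the statement to Suslin sets (which is what your proof actually delivers, and is all that is used later), or explain why in this particular setting every set is capacitable --- for instance, if the definition of $S_\Phi(E)$ is read literally with ${\rm int}\,E$ on the left-hand side, then $\capP(E)=\capP({\rm int}\,E)$ and the lemma collapses to your open-set step, making Choquet's theorem unnecessary. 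As written, the proposal proves less than the lemma's quantifier promises, and the hand-waving in step three obscures rather than fills that gap; the paper's own one-liner is no more rigorous here, since the construction of Lemma~\ref{lem:cap-open} manufactures open supersets, not compact subsets.
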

  
To conclude Corollary~\ref{coro:decomp} we need the following decomposition lemma. Its proof is essentially the one of \cite[Lemma~2.1]{FST} with minor modifications to our notation, but we enclose it for the sake of completeness.
\begin{lemma}\label{lem:basic-decomp}
Suppose $\Omega$ is a bounded set in $\rn$. Then for every $\mu\in\Mb$ there exist unique decomposition $\mu=\mu_0+\mu_1$, such that \begin{itemize}
\item[(a)] $\mu_0(D)=0$ for every measurable set $D$ with $\capP(D)=0$,
\item[(b)] $\mu_1=\mu\mathds{1}_{N}$ for some measurable set $N$ with $\capP(N)=0$,
\end{itemize}
\end{lemma}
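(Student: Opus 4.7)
The plan is to imitate the classical construction of the absolutely--continuous/singular decomposition of a bounded measure with respect to a capacity, exploiting only the countable subadditivity of $\capP$ from Lemma~\ref{lem:basic-1}(vi) and, for uniqueness, the defining properties (a)--(b).

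First, I would introduce the family
\[
\mathcal{N}=\{N\subset\Omega:\ N \text{ is Borel and } \capP(N)=0\},
\]
and observe by Lemma~\ref{lem:basic-1}(vi) that $\mathcal{N}$ is closed under countable unions. Using the outer regularity of $\capP$ from Lemma~\ref{lem:cap-open}, any set of capacity zero is contained in a Borel (in fact $G_\delta$) set of capacity zero, so restricting to Borel representatives loses nothing. Set
\[
s:=\sup\bigl\{|\mu|(N):\ N\in\mathcal{N}\bigr\}\leq |\mu|(\Omega)<\infty
\]
and pick $\{N_i\}\subset\mathcal{N}$ with $|\mu|(N_i)\to s$. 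Defining $N:=\bigcup_{i=1}^\infty N_i$ gives $N\in\mathcal{N}$ by countable subadditivity, and $|\mu|(N)=s$ by monotonicity. I then set $\mu_1:=\mu\mathds{1}_N$ and $\mu_0:=\mu-\mu_1=\mu\mathds{1}_{\Omega\setminus N}$. This immediately yields property (b).

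To obtain property (a), take any measurable $D$ with $\capP(D)=0$; replacing $D$ by a Borel superset of the same capacity, assume $D\in\mathcal{N}$. Then $N\cup D\in\mathcal{N}$, so
\[
s=|\mu|(N)\leq |\mu|(N\cup D)=|\mu|(N)+|\mu|(D\setminus N)\leq s,
\]
forcing $|\mu|(D\setminus N)=0$. Hence $\mu_0(D)=\mu(D\cap(\Omega\setminus N))=0$, proving (a). For uniqueness, suppose $\mu=\mu_0+\mu_1=\mu_0'+\mu_1'$ with both decompositions satisfying (a) and (b), say $\mu_1=\mu\mathds{1}_{N}$ and $\mu_1'=\mu\mathds{1}_{N'}$ with $\capP(N)=\capP(N')=0$. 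Then $M:=N\cup N'\in\mathcal{N}$ and for any measurable $E$ we have $\mu_0(E\cap M)=0=\mu_0'(E\cap M)$ by (a), while $\mu_1(E\setminus M)=\mu_1'(E\setminus M)=0$ by (b); adding and comparing with $\mu(E)=\mu_0(E)+\mu_1(E)=\mu_0'(E)+\mu_1'(E)$ yields $\mu_1(E)=\mu(E\cap M)-\mu_0(E\cap M)=\mu_1'(E)$, hence $\mu_0=\mu_0'$ as well.

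I anticipate the only delicate point to be the measurability issue: a priori an arbitrary set of capacity zero need not be Borel/$|\mu|$-measurable, so the sup in the definition of $s$ must be interpreted over a suitable class. The remedy is the outer regularity of $\capP$ (Lemma~\ref{lem:cap-open}), which provides Borel enlargements of the same capacity and makes the construction of the maximal null set $N$ entirely rigorous. All remaining steps are direct consequences of the countable subadditivity of $\capP$ established in Lemma~\ref{lem:basic-1}.
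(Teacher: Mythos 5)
Your construction follows the same route as the paper's: take a sequence of capacity-zero sets whose mass approaches the supremum, form their union $N$, and restrict $\mu$ to $N$ and to its complement. One refinement in your version is worth flagging. You maximize $|\mu|(N)$ over capacity-zero sets, whereas the paper maximizes $\mu(D)$ directly. For a signed $\mu$ the paper's version only yields $\mu(D\setminus D_\infty)\leq 0$ for capacity-zero $D$, not $\mu(D\setminus D_\infty)=0$; to close that gap one must either pass to $|\mu|$ as you do, or first split $\mu=\mu^+-\mu^-$ and run the argument on each part. Your explicit uniqueness argument (comparing two candidate decompositions through the union of their null sets) and your attention to Borel measurability via the outer regularity of $\capP$ also make precise two points that the paper's proof dispatches as evident. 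So: same approach, but yours is the more careful execution of it.
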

\begin{proof} Let $\B$ be a family of measurable subsets of $\Omega$. We shall construct our decomposition making use of an arbitrary fixed sequence of sets $D_1\subset D_2\subset\dots \subset\B$ of sets with $\capP(D_i)=0$, such that  
\[\lim_{i\to\infty}\mu(D_i)=\alpha:=\sup\{\mu(D):\ D\in\B\ \text{ and }\ \capP(D)=0\}<\infty.\]Set $D_\infty=\bigcup_{i=1}^\infty D_i$ and notice that $D_\infty\in\B$, $\capP(D_\infty)=0$ and $\mu(D_\infty)=\alpha$. Then $\mu(D\setminus D_\infty)=0$ for every $D\in\B$ with $\capP(D)=0$. By defining $\mu_0=\mu\mathds{1}_{\rn\setminus D_\infty}$ and $\mu_1=\mathds{1}_{D_\infty}\mu$ we get the decomposition of the desired properties. In particular, uniqueness of the decomposition is evident.
\end{proof}

\noindent\textbf{Lebesgue's points.} We shall show  that sets of  $\Phi$-capacity zero are removable for Orlicz-Sobolev functions.

\begin{definition}\label{def:quasi}
Function $u$ is called {\em $\Phi$-quasicontinuous} if for
every $\e>0$ there exists an open set $U$ with $\capP (U)<\e$, such that $f$ restricted to $\Omega\setminus U$ is continuous. We say that a~claim holds {\em $\Phi$-quasieverywhere} if it holds outside a set of Sobolev $\Phi$-capacity zero.
\end{definition}

 Let us start with the following observation. 

\begin{lemma}\label{lem:almost-uniform} For each Cauchy
sequence with respect to the $W^{1,\Phi} (\Omega)$-modular of functions from $C(\rn)\cap W^{1,\Phi} (\Omega)$ there is a subsequence which converges pointwise $\Phi$-quasieverywhere in $\Omega$. Moreover, the convergence is uniform outside a set of arbitrary small Sobolev $\Phi$-capacity.
\end{lemma}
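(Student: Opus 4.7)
The plan is to extract a rapidly Cauchy subsequence, control via $\capP$ the set where consecutive differences remain large, and conclude by a Borel--Cantelli-type argument using countable subadditivity from Lemma~\ref{lem:basic-1}(vi).

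First, since $\Phi\in\Delta_2\cap\nabla_2$ near infinity, the modular and norm topologies on $W^{1,\Phi}(\Omega)$ coincide, so from the given modular Cauchy sequence I can pass to a subsequence $v_j:=u_{k_j}$ of $C(\rn)\cap W^{1,\Phi}(\Omega)$ satisfying
\[
\FPh\!\left[\, 2^{j}(v_{j+1}-v_j)\right]\leq 2^{-j}, \qquad j\in\N,
\]
by choosing $k_j$ so that $\|v_{j+1}-v_j\|_{W^{1,\Phi}(\Omega)}$ decays fast enough and absorbing the dilation $2^j$ using the $\Delta_2$-condition.

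Next, because each $v_j\in C(\rn)$, the difference $v_{j+1}-v_j$ is continuous on $\rn$, so the superlevel set
\[
E_j:=\{x\in\rn:\ |v_{j+1}(x)-v_j(x)|>2^{-j}\}
\]
is open. By Lemma~\ref{lem:grad-min}, the function $\vp_j:=2^{j}|v_{j+1}-v_j|$ lies in $W^{1,\Phi}(\rn)$; it is non-negative and strictly exceeds $1$ on $E_j$, hence $\vp_j\in S_\Phi(E_j)$. Therefore
\[
\capP(E_j)\leq \FPh[\vp_j]\leq 2^{-j}.
\]
Set $F_k:=\bigcup_{j\geq k}E_j$, which is open, and $F:=\bigcap_{k\in\N}F_k$. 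Countable subadditivity (Lemma~\ref{lem:basic-1}(vi)) gives
\[
\capP(F_k)\leq \sum_{j\geq k}2^{-j}=2^{1-k},
\]
so in particular $\capP(F)=0$.

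Finally, for any $x\in\rn\setminus F_k$ and all $m>l\geq k$,
\[
|v_m(x)-v_l(x)|\ \leq\ \sum_{j=l}^{m-1}|v_{j+1}(x)-v_j(x)|\ \leq\ \sum_{j=l}^{m-1}2^{-j}\ \leq\ 2^{1-l},
\]
so the sequence $(v_j)$ is uniformly Cauchy, hence uniformly convergent, on $\rn\setminus F_k$. Consequently $(v_j)$ converges pointwise on $\rn\setminus F$, that is $\Phi$-quasieverywhere in $\Omega$, and for any prescribed $\varepsilon>0$ one selects $k$ with $2^{1-k}<\varepsilon$ to obtain the open set $F_k$ of capacity less than $\varepsilon$ outside which the convergence is uniform.

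The only substantive point is the first step: verifying that in the $\Delta_2\cap\nabla_2$ regime one can pick the subsequence realising $\FPh[2^j(v_{j+1}-v_j)]\leq 2^{-j}$. This is the classical fact that a norm-Cauchy sequence in a doubling Orlicz--Sobolev space admits subsequences with arbitrarily fast norm decay, upon which the doubling condition transfers the decay from the norm to the modular of any fixed dilation $2^j$. Once this is in place, the capacity estimate $\capP(E_j)\leq\FPh[\vp_j]$ is immediate from the definition, and the remainder is Borel--Cantelli bookkeeping.
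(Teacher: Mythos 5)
Your proof is correct and follows essentially the same route as the paper: extract a rapidly Cauchy subsequence using the doubling condition, bound via $\capP$ the open superlevel sets of the consecutive differences (using Lemma~\ref{lem:grad-min} so that the scaled absolute differences are admissible competitors), and conclude by countable subadditivity. The only cosmetic difference is that the paper fixes a threshold $\lambda$ and bounds the gradient modular directly before invoking Poincar\'e, whereas you normalize the threshold to $2^{-j}$ and bound $\FPh$ in one shot; the substance is identical.
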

\begin{proof} Let $\{u_i\}_i\subset C(\rn)\cap W^{1,\Phi} (\Omega)$ be a Cauchy sequence with respect to the $W^{1,\Phi} (\Omega)$--modular topology. We can fix $\lambda>0$ and extract a subsequence such that \[\int_\Omega \Phi\left(\frac{2^i}{\lambda}(\nabla u_i-\nabla u_{i+1})\right)\,dx\leq \frac{1}{4^i},\quad i\in\N.\] We denote 
\[W_i:=\{x\in\rn:\ 2^i|u_i-u_{i+1}|>\lambda\},  \  i\in\N,\quad \text{and}\quad Z_j:=\bigcup_{i=j}^\infty W_i,  \  j\in\N.\]
Due to Lemma~\ref{lem:grad-min}, $v_i:= 2^i|u_i-u_{i+1}|\in W^{1,\Phi}(\Omega)$. In the view of~\eqref{anisopoinc} notice that 
\[\capP(W_i)\leq c\int_\Omega \Phi\left(\tfrac{1}{\lambda}\nabla v\right)dx= c\int_\Omega \Phi\left(\tfrac{2^i}{\lambda}(\nabla u_i-\nabla u_{i+1})\right)dx\leq c\,2^{-i}.\]
Subadditivity of Sobolev anisotropic capacity implies that
\[\capP(Z_j)\leq \sum_{i=j}^\infty \capP(W_i)\leq c\sum_{i=j}^\infty 2^{-i}\leq c\,2^{1-j}.\]
Thus, $\capP\left(\bigcap_{j=1}^\infty Z_j\right)\leq \lim_{j\to\infty}\capP(Z_j)=0.$ We have proven that $\{u_i\}$ converges pointwise in $\rn\setminus \bigcap_{j=1}^\infty Z_j$, that is except for a set of Sobolev anisotropic capacity zero.

As for the second claim, it suffices to realize that 
\[|u_l(x)-u_k(x)|\leq \sum _{i=l}^{k-1} |u_i(x)-u_{i+1}(x)|\leq c \sum _{i=l}^{k-1} 2^{-i}<c\,2^{1-l}\]
for every $x\in \rn\setminus Z_j$ and every $k>l>j$. Therefore, the convergence is uniform in $\rn\setminus Z_j$.
\end{proof}

\begin{proposition}\label{prop:quasicont}
For each $u\in W^{1,\Phi} (\Omega)$ there exists a unique $\Phi$--quasicontinuous function $v \in W^{1,\Phi} (\Omega)$  such that $u=v$
almost everywhere in $\Omega$.
\end{proposition}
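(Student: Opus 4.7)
The strategy is to construct the quasicontinuous representative as a pointwise $\Phi$-quasieverywhere limit of smooth approximants, and to obtain uniqueness by a capacitary argument.

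\emph{Existence.} First, I would apply Proposition~\ref{prop:approx} to find a sequence $\{u_i\} \subset C^\infty(\Omega) \cap W^{1,\Phi}(\Omega)$ converging modularly to $u$ in $W^{1,\Phi}(\Omega)$. Under the standing doubling assumption $\Phi \in \Delta_2 \cap \nabla_2$ near infinity, modular convergence coincides with norm convergence, so after extracting a fast-converging subsequence I may arrange that $\{u_i\}$ is Cauchy in the $W^{1,\Phi}$-modular topology with a common parameter $\lambda$. Lemma~\ref{lem:almost-uniform} applied to $\{u_i\}$ then yields, along a further subsequence, exceptional sets $Z_j$ with $\capP(Z_j) \leq c\,2^{1-j}$ and a limit set $Z = \bigcap_j Z_j$ of $\Phi$-capacity zero, on whose complement $\Omega \setminus Z$ the sequence converges pointwise and uniformly on each $\Omega \setminus Z_j$. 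Setting
\[
v(x) := \lim_{i \to \infty} u_i(x) \text{ for } x \in \Omega \setminus Z, \qquad v(x) := 0 \text{ for } x \in Z,
\]
the uniform convergence of continuous $u_i$ makes $v|_{\Omega \setminus Z_j}$ continuous for every $j$, so $v$ is $\Phi$-quasicontinuous by Definition~\ref{def:quasi}. Modular convergence entails convergence in measure, so $u_i \to u$ a.e.\ along a further subsequence, giving $v = u$ a.e.\ and hence $v \in W^{1,\Phi}(\Omega)$.

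\emph{Uniqueness.} Let $v_1, v_2 \in W^{1,\Phi}(\Omega)$ be two $\Phi$-quasicontinuous representatives of $u$. Then $w := v_1 - v_2$ is $\Phi$-quasicontinuous with $w = 0$ a.e., and it suffices to show $\capP(\{|w|>\alpha\}) = 0$ for every $\alpha > 0$. Given $\varepsilon > 0$, quasicontinuity supplies an open $U$ with $\capP(U) < \varepsilon$ and $w|_{\Omega \setminus U}$ continuous. For any $x \in \Omega \setminus \overline U$, choose $\delta > 0$ with $B(x,\delta) \subset \Omega \setminus U$; since $w$ is continuous there and vanishes a.e., it vanishes identically on $B(x,\delta)$, so in particular $w(x) = 0$. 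Thus $\{|w| > \alpha\} \subset \overline U$. A capacitary dilation argument, consisting of taking a near-minimizer $\vp \in S_\Phi(U)$, approximating it by continuous functions via Proposition~\ref{prop:approx}, and replacing it by $(1+\tau)\vp$ to ensure $\{(1+\tau)\vp > 1\} \supset \overline U$, combined with the doubling estimate $\FPh[(1+\tau)\vp] \leq C_\tau \FPh[\vp]$, yields $\capP(\overline U) \leq C\,\capP(U) < C\varepsilon$. Sending $\varepsilon \to 0$ gives $\capP(\{|w|>\alpha\}) = 0$.

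The main obstacle I anticipate is the closing step of the uniqueness proof: passing from a test function for the open set $U$ to one for the possibly larger closed set $\overline U$ and controlling the corresponding energies. This relies crucially on the doubling hypothesis $\Phi \in \Delta_2 \cap \nabla_2$ (so that continuous functions are dense and scalar multiplications are controlled) together with the basic capacity properties established in Lemma~\ref{lem:basic-1} and the outer regularity of Lemma~\ref{lem:cap-open}. By contrast, the existence part is a clean composition of Proposition~\ref{prop:approx} and Lemma~\ref{lem:almost-uniform}.
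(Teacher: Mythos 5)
Your existence argument is correct and is essentially identical to the paper's: approximate $u$ by smooth functions via Proposition~\ref{prop:approx}, apply Lemma~\ref{lem:almost-uniform} to a fast subsequence, and pass to the limit. (In fact, the paper's own proof only addresses existence and does not argue uniqueness at all, so you are going further than the source.)

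The uniqueness argument, however, contains a genuine gap at the capacitary dilation step. You correctly reduce to showing $\capP(\{|w|>\alpha\})=0$ where $w=v_1-v_2$ is quasicontinuous with $w=0$ a.e., and you correctly observe $\{|w|>\alpha\}\subset \overline U$ for the exceptional open set $U$ with $\capP(U)<\varepsilon$. But the claim $\capP(\overline U)\le C\,\capP(U)$ is false in general: take $U=\bigcup_k B(q_k,r_k)$ with $\{q_k\}$ a dense sequence and $r_k\to 0$ fast enough that $\sum_k\capP(B(q_k,r_k))<\varepsilon$; then $\overline U$ contains a ball (or all of $\Omega$), whose capacity is bounded below independently of $\varepsilon$. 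The dilation itself also does not deliver the claimed inclusion: for a mere Sobolev test function $\vp\in S_\Phi(U)$, $\{(1+\tau)\vp>1\}=\{\vp>1/(1+\tau)\}$ contains $U$ but has no reason to contain $\overline U$, and approximating $\vp$ in norm by continuous functions does not preserve the pointwise lower bound $\vp>1$ on $U$ (norm convergence gives no uniform control).

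The standard repair exploits the fact you have not used, namely that $w\in W^{1,\Phi}(\Omega)$ and $w=0$ a.e.\ implies $\nabla w=0$ a.e. Given $\varepsilon>0$, pick $\vp\in S_\Phi(U)$ with $\FPh[\vp]<\varepsilon$, extend $w$ by $0$ outside $\Omega$, and set $\psi:=\max\{\vp,|w|/\alpha\}$. Then $\psi\ge 0$, $\psi>1$ on $U\cup\{|w|>\alpha\}\supset\operatorname{int}\{|w|>\alpha\}$, so $\psi\in S_\Phi(\{|w|>\alpha\})$; on the other hand $\psi=\vp$ a.e.\ and, by Lemma~\ref{lem:grad-min}, $\nabla\psi=\nabla\vp$ a.e., hence $\FPh[\psi]=\FPh[\vp]<\varepsilon$. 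This gives $\capP(\{|w|>\alpha\})<\varepsilon$ directly, with no need to pass to $\overline U$ at all. Note that this argument crucially requires $w$ itself to lie in the Sobolev space with vanishing gradient; it is not a soft consequence of quasicontinuity alone.
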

\begin{proof} Let $u\in W^{1,\Phi} (\Omega)$. Due to Proposition~\ref{prop:approx}, it follows that there exists a sequence of functions $\{u_i\}_i\subset C^\infty(\Omega)\cap W^{1,\Phi}(\Omega)$, such that $u_i\to u$ modularly in $W^{1,\Phi} (\Omega)$. By
Lemma~\ref{lem:almost-uniform} there exists a subsequence that converges uniformly outside a set of arbitrarily small capacity. Note that the uniform convergence implies continuity of the limit function, so we get that $u$  restricted to complement of a set of arbitrarily small capacity is continuous, what was to be shown.
\end{proof}
\subsection{Relative capacity}\label{ssec:rel-cap}
We shall consider also  anisotropic relative capacity (that can be called also  anisotropic variational capacity). With this aim, for every $K$ compact in $\Omega$ this let us denote\begin{equation}
\label{def:R}
\mathcal{R}_\Phi(K,\Omega) := \{u\in W^{1,\Phi} (\Omega)\cap C_0(\Omega):\quad u\geq 1\ \text{ on $K$ and }\ u \geq 0\}.
\end{equation}
We recall $\FPh$ from~\eqref{Phi-func} and we set
\[\CapP(K,\Omega) :=\inf\left\{ %\int_K \Phi_\circ(\kappa|\vp|)+\Phi(\nabla \vp)\,dx
\FPh[\vp]:\ \ \vp\in \mathcal{R}_\Phi(K,\Omega )\right\} .\]
For open  sets $A\subset\Omega$ we define\[\CapP (A,\Omega)=\sup\left\{\CapP(K,\Omega):\quad K\subset A \text{ and } K \text{ is compact in }A\right\} \]
and to an arbitrary set $E\subset\Omega$ by\[\CapP (E,\Omega)=\inf\left\{\CapP(A,\Omega):\quad E\subset A \text{ and } A \text{ is open in }\Omega\right\}. \]
By the same arguments as in Lemmas~\ref{lem:cap-open} and~\ref{lem:choquet} this notion of capacity enjoys Choquet property, i.e.
\[\CapP(E,\Omega)=\sup\{\CapP(K,\Omega):\quad K\text{ is compact in } E\}.\] 
When a set $K$ is compact, due to Proposition~\ref{prop:approx} each function $u\in \mathcal{R}_\Phi(K,\Omega)$ can be modularly approximated by smooth functions. Consequently, 
\[\CapP(K,\Omega) =\inf\left\{
\FPh[\vp]:\ \ \vp\in C^\infty(\Omega )\cap 
\mathcal{R}_\Phi(K,\Omega) \right\}. \]  

\begin{remark}\label{rem:cap-cap}
Obviously, for a set $E\subset\Omega\subset\rn$ we have $\capP(E)\leq\CapP(E,\Omega)$.
\end{remark}

\subsection{Sets of zero capacity.} 
Directly from definition we have the following properties.
\begin{lemma} Each set of  anisotropic capacity zero is contained in a Borel set of  anisotropic capacity zero. Countable union of sets of  anisotropic capacity zero has  anisotropic capacity zero.\end{lemma}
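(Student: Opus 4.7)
The plan is to derive both assertions directly from results already collected in the excerpt, namely the subadditivity property in Lemma~\ref{lem:basic-1}(vi), the monotonicity property in Lemma~\ref{lem:basic-1}(ii), and the outer regularity of $\capP$ from Lemma~\ref{lem:cap-open}. Neither assertion requires new capacitary machinery.

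For the second claim, let $\{E_i\}_{i=1}^\infty$ be a countable family of sets with $\capP(E_i)=0$ for every $i$. By countable subadditivity of $\capP$ (Lemma~\ref{lem:basic-1}(vi)) we have
\[
\capP\Big(\bigcup_{i=1}^\infty E_i\Big)\ \leq\ \sum_{i=1}^\infty \capP(E_i)\ =\ 0,
\]
so the union has capacity zero.

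For the first claim, fix $E\subset\rn$ with $\capP(E)=0$. By Lemma~\ref{lem:cap-open},
\[
\capP(E)=\inf\{\capP(U):\ U\subset\rn\ \text{open},\ E\subset U\}=0,
\]
so for every $k\in\N$ there exists an open set $U_k\supset E$ with $\capP(U_k)<\tfrac{1}{k}$. Set
\[
B\ :=\ \bigcap_{k=1}^\infty U_k.
\]
Then $B$ is a $G_\delta$-set, hence Borel, and $E\subset B$. By monotonicity of $\capP$ (Lemma~\ref{lem:basic-1}(ii)) one obtains $\capP(B)\leq \capP(U_k)<\tfrac{1}{k}$ for every $k\in\N$, therefore $\capP(B)=0$, which concludes the proof.

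Since both parts reduce to a one-line application of already established properties, there is no real obstacle; the only thing worth observing is that the first assertion genuinely needs the outer-regularity Lemma~\ref{lem:cap-open} rather than only monotonicity, because an arbitrary set of zero capacity need not itself be Borel.
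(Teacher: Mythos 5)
Your proof is correct. The paper gives no proof at all for this lemma, declaring it to follow ``directly from definition,'' so there is no argument to compare against. Your route---countable subadditivity from Lemma~\ref{lem:basic-1}(vi) for the second assertion, and outer regularity from Lemma~\ref{lem:cap-open} combined with monotonicity to produce a $G_\delta$ envelope for the first---is exactly what is needed and correctly fills the gap the paper leaves implicit. Your closing observation that the first claim genuinely requires the outer-regularity lemma, and not merely the definition, is a fair point: the paper's ``directly from definition'' is a slight overstatement for that half of the statement.
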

 Moreover, having Poincar\'e inequality~\eqref{anisopoinc} we infer the following fact.
\begin{lemma} If $B_R$ is a ball in $\rn$, $E\subset B_R$ and $\CapP(E,B_R)=0$, then $|E|=0$.\\
If a  set $E\subset\Omega\subset\rn$ satisfies $\CapP(E,\Omega)=0$ also $\capP(E)=0$.
\end{lemma}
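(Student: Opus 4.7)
I would prove the two assertions separately, each reducing to a quantitative bound relating Lebesgue measure (respectively, the Sobolev capacity $\capP$) to the relative capacity $\CapP$.

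For the first statement, the plan is to exploit that the functional $\FPh$ in~\eqref{Phi-func} already contains the summand $\int \Phi_\circ(\kappa|\varphi|)\,dx$, which directly controls the measure of the set where $|\varphi|$ is large. Concretely, given a compact $K\subset B_R$ and any admissible $\varphi\in\mathcal{R}_\Phi(K,B_R)$, the pointwise bound $\varphi\geq 1$ on $K$ together with the monotonicity of $\Phi_\circ$ yields
\[
\Phi_\circ(\kappa)\,|K|\ \leq\ \int_{B_R}\Phi_\circ(\kappa\varphi)\,dx\ \leq\ \FPh[\varphi].
\]
Taking the infimum over admissible $\varphi$ gives $|K|\leq \CapP(K,B_R)/\Phi_\circ(\kappa)$, a Lebesgue measure bound for compact sets. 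I would then lift this to open $A\subset B_R$ using the definition of $\CapP(A,B_R)$ as a supremum over compact subsets combined with inner regularity of Lebesgue measure, and finally to arbitrary $E\subset B_R$ using the outer-regularity definition of $\CapP(E,B_R)$ together with outer regularity of Lebesgue measure. The hypothesis $\CapP(E,B_R)=0$ then forces $|E|=0$.

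For the second statement there is essentially nothing to prove beyond invoking Remark~\ref{rem:cap-cap}: it records the comparison $\capP(E)\leq\CapP(E,\Omega)$ for every $E\subset\Omega$, so $\CapP(E,\Omega)=0$ immediately yields $\capP(E)=0$.

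The only steps demanding care are the two passages in part~(1) -- from compact to open, and from open to arbitrary sets -- but these are routine because the definitions of $\CapP$ on open and on arbitrary sets are designed to be compatible with inner and outer regularity of Lebesgue measure, respectively. I do not anticipate a genuine obstacle; the only substantive ingredient is the observation that $\Phi_\circ(\kappa)>0$ (which holds since $\Phi$ is an $N$-function), turning the $L^{\Phi_\circ}$-term of $\FPh$ into a usable lower bound on $|K|$.
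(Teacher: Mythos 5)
Your argument is correct, and it fills in a proof the paper does not actually write out (the lemma is only prefaced by ``having Poincar\'e inequality~\eqref{anisopoinc} we infer the following fact''). The route you take is slightly more direct than the one the paper hints at: since the functional $\FPh$ in~\eqref{Phi-func} already carries the zero-order term $\int\Phi_\circ(\kappa|\varphi|)\,dx$, you can read off the Chebyshev-type bound $\Phi_\circ(\kappa)\,|K|\leq\FPh[\varphi]$ for any $\varphi\in\mathcal R_\Phi(K,B_R)$ without invoking Poincar\'e at all; the paper's phrasing suggests estimating $\int\Phi_\circ(\kappa|\varphi|)\,dx$ from the gradient term via~\eqref{anisopoinc}, which lands at the same place but would be the necessary detour if $\FPh$ had been defined through $\int\Phi(\nabla\varphi)\,dx$ alone. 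Your extensions from compact to open (inner regularity on both sides) and from open to arbitrary (outer regularity, with $|E|$ read as outer measure so the conclusion $|E|=0$ is unambiguous even for non-measurable $E$) are exactly what the two-step definition of $\CapP$ is built for, and they close the first claim cleanly. Your dispatch of the second claim via Remark~\ref{rem:cap-cap} is also the intended argument. The only thing worth flagging is that you silently rely on $\Phi_\circ(\kappa)>0$; you do mention it at the end, and it is indeed guaranteed by $\Phi$ being an $n$-dimensional $N$-function (so $\Phi_\circ$ vanishes only at $0$) and $\kappa>0$, so the proof is complete.
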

\noindent On the other hand, it is possible that a set has measure zero, but positive capacity.

\subsection{Approximation}

\begin{lemma}\label{lem:dual-approx}
Suppose $\nu\in (W_0^{1,\Phi}(\Omega))'\cap\Mb(\Omega)$. For every $\ve>0$ there exists $f\in C_0^\infty(\Omega)$, such that
\[\|f-\nu\|_{(W_0^{1,\Phi}(\Omega))'}\leq\ve\quad\text{and}\quad \|f\|_{L^1(\Omega)}\leq|\nu|(\Omega).\]
\end{lemma}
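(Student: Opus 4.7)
The plan is to construct $f$ by convolving a localized version of $\nu$ with a standard mollifier and controlling the resulting errors via the representation furnished by Lemma~\ref{lem:distr}. By that lemma, $\nu = -\dv\,\xi$ distributionally for some $\xi \in L^{\wt\Phi}(\Omega;\mathbb{R}^n)$; since $\Phi \in \Delta_2 \cap \nabla_2$ near infinity, $L^{\wt\Phi} = E^{\wt\Phi}$, so mollification converges in the $L^{\wt\Phi}$-norm and the norm is absolutely continuous with respect to Lebesgue measure.

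Given $\varepsilon > 0$, choose a compact $K \subset \Omega$ making both $|\nu|(\Omega \setminus K)$ and $\|\mathds{1}_{\Omega\setminus K}\xi\|_{L^{\wt\Phi}}$ as small as needed. Pick a cutoff $\eta \in C_0^\infty(\Omega)$ with $0 \leq \eta \leq 1$ and $\eta \equiv 1$ on $K$, and set $f := \rho_\delta * (\eta\nu)$ for the standard mollifier $\rho_\delta$ with $\delta < \dist(\mathrm{supp}\,\eta, \partial\Omega)$, so that $f \in C_0^\infty(\Omega)$. Young's inequality for a measure convolved with an $L^1$-kernel yields the required $L^1$-bound
\[
\|f\|_{L^1(\Omega)} \leq |\eta\nu|(\mathbb{R}^n) = \int \eta\,d|\nu| \leq |\nu|(\Omega).
\]

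For the dual-norm estimate, split $f - \nu = (f - \eta\nu) + (\eta\nu - \nu)$ and pair with $v \in W_0^{1,\Phi}(\Omega)$. The distributional product rule represents $\eta\nu$ as $-\dv(\eta\xi) + \xi\cdot\nabla\eta$, so after convolving and integrating by parts, $\langle f - \eta\nu, v\rangle$ reduces to sums of $\int [\rho_\delta*(\eta\xi) - \eta\xi]\cdot\nabla v\,dx$ and $\int[\rho_\delta*(\xi\cdot\nabla\eta) - \xi\cdot\nabla\eta]\,v\,dx$; by H\"older's inequality~\eqref{holder}, Poincar\'e~\eqref{norm-poinc}, and norm convergence of mollifiers in $L^{\wt\Phi}$, this part vanishes as $\delta \to 0$ for fixed $\eta$. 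The truncation error $\langle \nu - \eta\nu, v\rangle = \int \xi\cdot\nabla[(1-\eta)v]\,dx$ expands via Leibniz into $\int(1-\eta)\xi\cdot\nabla v\,dx - \int v\,\xi\cdot\nabla\eta\,dx$; the first summand is bounded by $2\|\mathds{1}_{\Omega\setminus K}\xi\|_{L^{\wt\Phi}}\|\nabla v\|_{L^\Phi}$, hence small by construction.

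The main obstacle is the residual term $\int v\,\xi\cdot\nabla\eta\,dx$, as $\|\nabla\eta\|_\infty$ does not shrink when $\eta$ is chosen to equal $1$ on a larger compact subset. My resolution is to identify this integral with (minus) the duality pairing $\langle \nu, (1-\eta)v\rangle$ modulo already-controlled quantities, then iterate the estimate with $v$ replaced by $(1-\eta)v \in W_0^{1,\Phi}(\Omega)$ so that successive iterates concentrate in thinner boundary layers and the resulting telescoping series converges by the absolute continuity of the $L^{\wt\Phi}$-norm combined with Poincar\'e. Alternatively, when the geometry permits, one can bypass the cutoff entirely through a partition-of-unity argument with local inward shifts of $\nu$: the pushforward preserves $|\nu|(\Omega)$, while translation continuity in $L^{\wt\Phi}$ (valid since $\wt\Phi \in \Delta_2$) controls the dual-norm error.
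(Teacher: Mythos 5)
Your approach is genuinely different from the paper's and, unfortunately, the gap you flag yourself is fatal as written. The paper does not touch mollification or cutoffs at all: it argues by contradiction via Hahn--Banach hyperplane separation. Setting $\mathcal{E}=\{f\in C_0^\infty(\Omega):\|f\|_{L^1}\leq|\nu|(\Omega)\}$, if no $f\in\mathcal{E}$ approximates $\nu$ within $\ve$ in dual norm, then $\nu$ is separated from $\overline{\mathcal{E}}$ by some $u\in W_0^{1,\Phi}(\Omega)$ (using reflexivity): $\sup_{f\in\mathcal{E}}\int fu\,dx\leq s<\int u\,d\nu$. The sup on the left forces $u\in L^\infty$ with $\|u\|_\infty\leq s/|\nu|(\Omega)$; then the quasicontinuity of $u$ (Proposition~\ref{prop:quasicont}) and the fact that $\nu$ charges no set of zero $\capP$-capacity give $|u|\leq\|u\|_\infty$ $|\nu|$-a.e., so $\int u\,d\nu\leq s$ --- a contradiction. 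No geometric control of $\partial\Omega$ beyond boundedness is needed.

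The gap in your argument is the term $\int v\,\xi\cdot\nabla\eta\,dx$, and the two remedies you sketch do not close it. The iteration replaces $v$ by $(1-\eta)v$, but the estimate you intend to iterate requires a uniform bound on $\|\nabla[(1-\eta)^jv]\|_{L^\Phi}$ in terms of $\|\nabla v\|_{L^\Phi}$; the product rule reproduces exactly the offending $v\nabla\eta$-type term at every stage, and since $\|\nabla\eta\|_\infty\to\infty$ as $\eta$ fills $\Omega$, nothing forces the putative telescoping series to converge. Indeed it is not true in general that $\|\eta\nu-\nu\|_{(W_0^{1,\Phi})'}\to 0$ as $\eta\uparrow 1$ for smooth cutoffs on a bad domain: that would essentially need a Hardy-type inequality $\|v/\dist(\cdot,\partial\Omega)\|_{L^\Phi}\lesssim\|\nabla v\|_{L^\Phi}$, which requires both boundary regularity and further structure on $\Phi$, neither of which is assumed here. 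Your alternative (partition of unity with local inward shifts) has the same problem: it presupposes that $\Omega$ admits such a covering (e.g.\ Lipschitz or star-shaped pieces), which is a strictly stronger hypothesis than the paper uses for this lemma. The Hahn--Banach route is precisely what lets one couple the $L^1$-bound with dual-norm approximation without ever truncating $\nu$ near the boundary; your constructive route would, at minimum, need to invoke a Hardy inequality and restrict the class of domains, and even then the iteration step should be replaced by a single clean estimate rather than a telescoping series.
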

\begin{proof}There is nothing to prove if $|\nu|(\Omega)=0$, so without loss of generality we assume the converse. Let us define
\[\mathcal{E}:=\{f\in C_0^\infty(\Omega):\ \|f\|_{L^1(\Omega)}\leq|\nu|(\Omega)\},\]
which is a closed and convex set. Suppose, by contradiction, that there exists $\ve>0$, such that no $f\in\mathcal{E}$ satisfies $\|f-\nu\|_{(W_0^{1,\Phi}(\Omega))'}\leq\ve$. Then $\nu$ does not belong to the closure of $\mathcal{E}$ in $(W_0^{1,\Phi}(\Omega))'$. Hyperplane separation theorem (Theorem~\ref{theo:hyp-sep}) ensures that a point $\nu$ can be separated from a convex set $\mathcal{E}$, namely that there exist $u\in W_0^{1,\Phi}(\Omega)$ and $s\in\R$ such that
\begin{equation}
\label{separ}\sup_{f\in\mathcal{E}}\int_\Omega f u\,dx\leq s<\int_\Omega  u\,d\nu.
\end{equation} The first inequality in the last display implies that $u\in L^\infty(\Omega)$ and \[\|u\|_{L^\infty(\Omega)}\leq\frac{s}{|\nu|(\Omega)}.\] Since $u\in W_0^{1,\Phi}(\Omega)\cap L^\infty(\Omega)$, also $|u|\leq \|u\|_{L^\infty(\Omega)}$ $\capP$-quasi-everywhere, hence also $|\nu|$-a.e. in $\Omega$. Therefore
\[\int_\Omega u\,d\nu\leq \|u\|_{L^\infty(\Omega)}|\nu|(\Omega)\leq\frac{s}{|\nu|(\Omega)} |\nu|(\Omega)\leq s,\]
but this contradicts with the second inequality of~\eqref{separ}.
\end{proof}

\subsection{Measure characterization -- Proof of Theorem~\ref{theo:decomp}}\label{ssec:decomp}
Before we give the proof of Theorem~\ref{theo:decomp} we shall   concentrate on the absolute continuity of $\mu\in L^1(\Omega)+(W_0^{1,\Phi}(\Omega))'$  with respect to $W^{1,\Phi}$-capacity. Notice that for a~nonnegative measure $\mu$, such that $\mu =f+\dv G \in L^1(\Omega)+(W_0^{1,\Phi}(\Omega))'$ and an arbitrary set $E\subset\Omega$ we have for every ${\vp\in W^{1,\Phi}_0(\Omega)}$
\[\mu(E)\leq \int_E f\,\vp\,dx+\int_E G\cdot\nabla\vp\,dx\leq \|f\|_{L^1(E)}\|\vp\|_{L^\infty(R)}+\|G\|_{L^{\wt\Phi}(\Omega;\rn)}\|\nabla \vp\|_{L^{\Phi}(\Omega;\rn)}.\]  

\begin{lemma}\label{lem:cap0}
If $\Omega\subset\rn$, $\mu\in L^1(\Omega)+(W_0^{1,\Phi}(\Omega))'$ and a set $E\subset\Omega$ is such that $\CapP(E,\Omega)=0$, then $\mu(E)=0$.
\end{lemma}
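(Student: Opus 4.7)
The plan is to invoke Lemma~\ref{lem:distr} to represent $\mu=f+\dv G$ with $f\in L^1(\Omega)$ and $G\in L^{\wt\Phi}(\Omega;\rn)$, and then, by inner regularity of the total variation $|\mu|$ and monotonicity of $\CapP$, to reduce the claim to showing $\mu(K)=0$ for every compact $K\subset\Omega$ with $\CapP(K,\Omega)=0$; indeed, any Borel $E$ with $\CapP(E,\Omega)=0$ is, up to a $|\mu|$-null set, the increasing union of such compacts. For fixed $K$ and $\ve>0$, I would first pick an open set $V$ with $K\subset V\Subset\Omega$ and $|\mu|(V\setminus K)<\ve$ (outer regularity of $|\mu|$), select $\vp_n\in C^\infty(\Omega)\cap\mathcal{R}_\Phi(K,\Omega)$ with $\FPh[\vp_n]\to 0$ (possible by the equivalent formulation of $\CapP(K,\Omega)$ over smooth test functions noted after its definition), truncate $\tilde\vp_n:=\min(\vp_n,1)$ via Lemma~\ref{lem:grad-min}, and localize by setting $\psi_n:=\eta\tilde\vp_n$ for some $\eta\in C_c^\infty(V)$ with $\eta\equiv 1$ on a neighbourhood of $K$ and $0\le\eta\le 1$. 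Then $\psi_n\in W_0^{1,\Phi}(\Omega)\cap C_0(V)$, $0\le\psi_n\le 1$, $\psi_n\equiv 1$ on $K$; a short computation combining convexity of $\Phi$ and $\Phi_\circ$, the $\Delta_2$-condition, and $\|\vp_n\|_{L^1(\Omega)}\to 0$ (by Jensen applied to $\int_\Omega\Phi_\circ(\kappa|\vp_n|)\,dx\to 0$ on the bounded domain) delivers $\FPh[\psi_n]\to 0$.

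Extending the distributional identity $\mu=f+\dv G$ from $C_c^\infty(\Omega)$ to $\psi_n\in W_0^{1,\Phi}(\Omega)\cap C_0(\Omega)$ by convolution approximation (uniform on $\Omega$ because $\psi_n$ is continuous with compact support, and modular in $W_0^{1,\Phi}$ by Proposition~\ref{prop:approx-0}), I would pair
\[
\int_\Omega\psi_n\,d\mu=\int_\Omega f\psi_n\,dx-\int_\Omega G\cdot\nabla\psi_n\,dx.
\]
The first integral on the right vanishes in the limit by dominated convergence ($\psi_n\to 0$ a.e.\ along a subsequence, $|\psi_n|\le 1$, $f\in L^1$), the second by H\"older's inequality combined with $\|\nabla\psi_n\|_{L^\Phi}\to 0$. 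On the other hand, since $\psi_n\equiv 1$ on $K$ and is supported in $V$,
\[
\int_\Omega\psi_n\,d\mu=\mu(K)+\int_{V\setminus K}\psi_n\,d\mu,\qquad\Bigl|\int_{V\setminus K}\psi_n\,d\mu\Bigr|\le|\mu|(V\setminus K)<\ve.
\]
Sending $n\to\infty$ yields $|\mu(K)|\le\ve$, and then $\ve\to 0$ forces $\mu(K)=0$; inner regularity of $|\mu|$ upgrades the compact case to arbitrary Borel $E$.

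The principal obstacle is the signed character of $\mu$: if $\mu\ge 0$ the direct bound $\mu(K)\le\int_\Omega\psi_n\,d\mu\to 0$ would work without any localization, but in general one must confine $\psi_n$ inside a neighbourhood $V$ of $K$ whose $|\mu|$-overshoot is dominated by $\ve$, and then absorb the boundary contribution. Checking that the cutoff-and-truncation surgery does not destroy smallness of $\FPh[\psi_n]$ is where the doubling hypothesis $\Phi\in\Delta_2\cap\nabla_2$ intervenes essentially: the gradient splits via $\Delta_2$ as $\Phi(\nabla\psi_n)\lesssim \Phi(\eta\nabla\tilde\vp_n)+\Phi(\tilde\vp_n\nabla\eta)$, the first piece is pointwise bounded by $\Phi(\nabla\vp_n)$ and integrates to zero, while the cross term is estimated by convexity as $\int_\Omega\Phi(\tilde\vp_n\nabla\eta)\,dx\le \|\Phi(\nabla\eta)\|_{L^\infty}\|\vp_n\|_{L^1(\Omega)}$, which vanishes thanks to the Jensen step above.
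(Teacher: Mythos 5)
Your proof is correct and goes through the same basic mechanism as the paper's: pair a sequence of admissible test functions with the distributional decomposition $\mu=f+\dv G$ from Lemma~\ref{lem:distr}, drive the right-hand side to zero using $\int\Phi(\nabla\vp_j)\to 0$, H\"older's inequality, and Proposition~\ref{prop:conv:mod-weak}, and pass from compacts to Borel sets by regularity. Where you depart is in the handling of the signed measure. The paper picks $\vp_j\in C^\infty(\Omega')\cap\mathcal{R}_\Phi(K,\Omega')$ for a fixed intermediate domain $\Omega'$ and writes $|\mu(K)|\le|\int_{\Omega'}\vp_j\,d\mu|$ at one stroke; for a signed $\mu$ this inequality is not evident, since $\int\vp_j\,d\mu^+-\int\vp_j\,d\mu^-$ is not obviously bounded below by $\mu^+(K)-\mu^-(K)$, and one cannot simply split $\mu=\mu^+-\mu^-$ since the individual parts need not separately lie in $L^1+(W_0^{1,\Phi})'$. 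You address this head-on: by outer regularity you enclose $K$ in a set $V$ with $|\mu|(V\setminus K)<\ve$, truncate $\vp_n$ at level $1$ (Lemma~\ref{lem:grad-min}), and cut off inside $V$, so that the identity $\int\psi_n\,d\mu=\mu(K)+\int_{V\setminus K}\psi_n\,d\mu$ isolates $\mu(K)$ with an error at most $\ve$. This buys you a fully rigorous treatment of the signed case, at the cost of verifying that the truncate-and-cutoff surgery preserves $\FPh[\psi_n]\to 0$ --- which it does, exactly as you argue, via convexity for the first gradient piece, Jensen and $\Phi_\circ$-smallness for the $L^1$-norm in the cross term, and $\Delta_2$ (together with $|V|<\infty$ and Vitali's theorem) to upgrade the resulting modular bound at a smaller scale to $\int\Phi(\nabla\psi_n)\to 0$. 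One small item worth spelling out is the final reduction: you remark that inner regularity of $|\mu|$ upgrades the compact case to Borel $E$, but since you only obtain $\mu(K)=0$ (not $|\mu|(K)=0$), the clean route is to apply the compact-set result separately to $K\subset E\cap P$ and $K\subset E\cap N$ for a Hahn decomposition $\Omega=P\cup N$, concluding $\mu^+(E)=\mu^-(E)=0$.
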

\begin{proof}By the assumption there exist $f\in L^1(\Omega)$ and $G\in L^{\wt\Phi}(\Omega;\rn)$, such that $\mu=f-\dv G$ in the sense of distributions. Then obviously $\mu\in\Mb(\Omega)$. Moreover, there exist a Borel set $E_0\supset E$ with $\CapP(E_0,\Omega)=0$. We fix compact $K\subset E_0$ and open $\Omega'\subset\Omega$, such that $K\subset\Omega'$. Obviously $\CapP(K,\Omega)=0$. Let us consider a sequence $\{\vp_j\}_j\subset C^\infty(\Omega')\cap \mathcal{R}_\Phi(K,\Omega')$ of functions such that \[%\text{$K\subset\{\vp_j= 1\},\ $ $0\leq\vp_j\leq 1\ $ and }
\int_{\Omega'}\Phi(\tfrac{1}{\lambda} \nabla\vp_j)\,dx\xrightarrow[j\to\infty]{}0\ \text{ for some $\lambda>0$},\]
which is possible due to Proposition~\ref{prop:approx-0}. Then
\[|\mu(K)|\leq \left|\int_{\Omega'}\vp_j\,d\mu\right|=\left|\int_{\Omega'}f\,\vp_j\,dx + \int_{\Omega'}G\cdot\nabla \vp_j\,dx\right|.\]
We take infimum with respect to $j$, make use of Proposition~\ref{prop:conv:mod-weak} and obtain that
\[|\mu(K)|\leq C\CapP(K,\Omega')\qquad\text{with}\qquad C=C(\|f\|_{L^1(\Omega')},\|G\|_{L^{\wt \Phi}(\Omega')}).\] This implies 
\[\mu(E)\leq\mu(E_0)=\sup\{\mu(K):\ K\subset E_0,\ K \text{ compact}\}=0.\] 
\end{proof}

To get the measure characterization we follow basic ideas from~\cite{BGO} with classical growth later used in~\cite{IC-measure-data,Zhang}. 

\begin{proof}[Proof of Theorem~\ref{theo:decomp}]

The implication: if $\mu$ belongs to $L^1(\Omega) + (W_0^{1,\Phi}(\Omega))'$, then $\mu\in M^\Phi_b(\Omega)$ is provided in Lemma~\ref{lem:cap0}. We shall concentrate now on the converse, that is, if $\mu_\Phi\in M^\Phi_b(\Omega)$, then $\mu_\Phi\in L^1(\Omega) + (W_0^{1,\Phi}(\Omega))'$.

\medskip

\noindent {\em Step 1. Initial decomposition. }\\ The aim of this step is to show that for a nonnegative $\mup\in \MP(\Omega)$ we can find a positive measure $\gam\in(W^{1,\Phi}_0(\Omega))'$  and positive Borel measurable function $h$ belonging to~weighted Lebegue's space $L^1(\Omega,\gam)$ such that $d\mup=h\,d\gam$.

Due to Proposition~\ref{prop:quasicont} for any $\wt u\in W_0^{1,\Phi}(\Omega)$ we can find its uniquely defined $\Phi$-quasi-continuous representative denoted by $u$. We define a functional $F: W_0^{1,\Phi}(\Omega)\to [0,\infty]$ by
\[F[u]=\int_\Omega  u_+\,d\mup\]
and observe that it is convex and lower-semicontinuous on a space $W_0^{1,\Phi}(\Omega)$ which is separable since $\Phi\in\Delta_2\cap\nabla_2$. Thus, $F$ can be expressed as a supremum of a countable family of continuous affine functions. In fact, there exist sequences of functions $\{\xi_n\}_n\subset (W_0^{1,\Phi}(\Omega))'$ and numbers $\{a_n\}_n\subset\rn$ such that
\[F[u]=\sup_{n\in\N}\{\langle\xi_n,u\rangle-a_n\}\quad\text{for all }\ u\in W_0^{1,\Phi}(\Omega).\]
Then, for any $s>0$, $sF[u]=F[su]\geq s\langle\xi_n,u\rangle-a_n$ for every $n$. By dividing by $s$ and letting $s\to\infty$ we get $F[u]\geq \langle\xi_n,u\rangle$ for all $u\in W_0^{1,\Phi}(\Omega)$. Since $F[0]= 0$, it follows that $a_n\geq 0$. Thus $F[u]\geq \sup_{n\in\N} \langle\xi_n,u\rangle\geq \sup_{n\in\N}\{\langle\xi_n,u\rangle-a_n\}=F[u]$
 and, consequently, \begin{equation}
 \label{F} 
 F[u]=\sup_{n\in\N} \langle\xi_n,u\rangle.
 \end{equation} In turn, for all $\vp\in C_0^\infty(\Omega)$ we have
 \[ \langle\xi_n,\vp\rangle\leq  \sup_{n\in\N}\langle\xi_n,\vp\rangle=F[\vp]=\int_\Omega \vp_+\,d\mup\leq \|\mup\|_{\Mb(\Omega)} \|\vp\|_{L^\infty(\Omega)}.\]
 By the same arguments for $-\vp$ we get \[| \langle\xi_n,\vp\rangle|\leq  \|\mup\|_{\Mb(\Omega)} \|\vp\|_{L^\infty(\Omega)}\]
 implying that $\xi_n\in (W_0^{1,\Phi}(\Omega))'\cap \Mb(\Omega).$ By the Riesz representation theorem (Theorem~\ref{theo:riesz}) there exists nonnegative $\xinm\in\Mb(\Omega)$, such that 
 \[\langle\xi_n,\vp\rangle=\int_\Omega \vp \,d\xinm \quad\text{for all }\vp\in C_0^\infty(\Omega).\] 
We observe that \begin{equation}
 \label{xinm-mu}
\xinm\leq\mup\quad\text{and}\quad\|\xinm\|_{\Mb(\Omega)}\leq \|\mup\|_{\Mb(\Omega)}.
 \end{equation}
 Let us define\begin{equation}
 \label{eta}\eta=\sum_{n=1}^\infty\frac{\xi_n}{2^n(\|\xi_n\|_{(W_0^{1,\Phi}(\Omega))'}+1)}
 \end{equation}
 and note that the series in absolutely convergent in $(W_0^{1,\Phi}(\Omega))'$. Hence, whenever $\vp\in C_0^\infty(\Omega)$ we can write
 \begin{flalign*}
|\langle\eta,\vp\rangle|&\leq \sum_{n=1}^\infty\frac{|\langle \xi_n,\vp\rangle|}{2^n(\|\xi_n\|_{(W_0^{1,\Phi}(\Omega))'}+1)}\leq \sum_{n=1}^\infty\frac{\|\xinm\|_{\Mb(\Omega)}}{2^n}\|\vp\|_{L^\infty(\Omega)}\\&\leq \|\mup\|_{\Mb(\Omega)}\|\vp\|_{L^\infty(\Omega)}
\end{flalign*}
and $\eta\in (W_0^{1,\Phi}(\Omega))'\cap \Mb(\Omega)$ too.

Taking now\[\etam=\sum_{n=1}^\infty\frac{\xinm}{2^n(\|\xi_n\|_{(W_0^{1,\Phi}(\Omega))'}+1)}\]
we deal with the series of positive elements that is absolutely convergent in $ \Mb(\Omega)$. Moreover, $\xinm\ll \etam$ and thus for every $n$ there exists a nonnegative function $h_n\in L^1(\Omega,d\etam)$ such that $d\xinm=h_n\, d\etam$ and -- according to~\eqref{F} -- we get that
\begin{equation}
\label{functional}\langle\mup,\vp\rangle=
\int_\Omega \vp\,d\mup=\sup_{n\in\N}
\int_\Omega \vp\,d\xinm=\sup_{n\in\N}\int_\Omega h_n\,\vp\,d\etam\qquad\text{for any }\ \vp\in C_0^\infty(\Omega).
\end{equation} 
On the other hand,~\eqref{xinm-mu} ensures that $h_n\,\etam\leq\mup$, i.e. for any measurable set $E\subset\Omega$ and every $n$ we have \begin{equation}
\int_E h_n\,d \etam\leq \mup(E).
\end{equation} 
We denote $h_{\max}^{k}=\max\{h_1(x),\dots,h_k(x)\}$ and
\begin{equation}
\label{Es}E^{j,k}=\{x\in E:\ \ h_{\max}^{j}(x)>h_i(x)\ \text{ for every }\ i=1,\dots,k-1\}.
\end{equation}
Then $E^{j,k}$ for $j=1,\dots,k$ are pairwise disjoint and $E=\cup_{j=1}^k E^{j,k}$, so\begin{equation}
\int_E h_{\max}^k(x)\,d \etam\leq \sum_{j=1}^k
\int_{E^{j,k}} h_{\max}^k(x)\,d \etam\leq  \sum_{j=1}^k \mup(E^{j,k})=\mup(E).
\end{equation} 
Letting $k\to\infty$ and taking $h(x)=\sup_{j\in\N} h_j(x)$, we get for any measurable set $E\subset\Omega$\begin{equation}
\int_E h\,d \etam\leq \mup(E).
\end{equation} 
Having~\eqref{functional}, for every nonnegative $\vp\in C_0^\infty$ we have
\begin{flalign*}
\int_\Omega \vp\,d\mup=\sup_{j\in\N} \int_\Omega h_j \vp\,d\etam\leq \int_\Omega h \vp\,d\etam\leq \int_\Omega \vp\,d\mup
\end{flalign*}
that is \[d\mup=h\,d\etam.\] Since $\mup(\Omega)\in\Mb(\Omega)$, we deduce that $h\in L^1(\Omega,d\etam)$ and the aim of this step is achieved with $\gam=\etam\in(W_0^{1,\Phi}(\Omega))'$.

\medskip

\noindent {\em Step 2. Auxiliary sequence of measures. }\\
 Let $\{K_i\}_i$ be an increasing sequence of sets compact  in $\Omega$, such that $\cup_{i=1}^\infty K_i=\Omega$. We denote \[\wt\mu_i=T_i(h\mathds{1}_{K_i})\gam\qquad\text{for every }\ i\in\N\] and notice that $\{\wt\mu_i\}_i$ is an increasing sequence of positive measures in $(W^{1,\Phi}_0(\Omega))'$ with  supports compact in $\Omega$.  Set \begin{equation} \label{mui} \mu_0 =\wt\mu_0\equiv 0\qquad\text{ and }\qquad \mu_i=\wt\mu_i-\wt\mu_{i-1}\quad\text{for every }\ i\in\N. \end{equation} Then $\sum_{i=1}^m\mu_i=T_m(h\mathds{1}_{K_m})\gam\in(W^{1,\Phi}_0(\Omega))'\cap\Mb(\Omega)$. Since $\mu_i\geq 0$, we have also that $\sum_{i=1}^\infty \|\mu_i\|_{\Mb(\Omega)}<\infty$. Furthermore, $\mup=\sum_{i=1}^\infty\mu_i$ and this series is absolutely convergent in $\Mb(\Omega)$. 

\medskip

\noindent {\em Step 3. Construction of decomposition. }\\
% We take a standard mollifier $0\leq\vr\in C_0^\infty(B(0,1))$ (with $\int_\rn \vr\,dx=1$) and $\vr_k(x) = k^n\vr(kx)$ for every $x\in\rn$. We define a function $\mu_{i,k}^\vr$ as a mollification of $\mu_{i}$ given by~\eqref{mui} by the formula \begin{equation} \label{muik} \mu_{i,k}^\vr(x)=\int_\rn \vr_k(x-y)\,d\mu_i(y ). \end{equation} 
%Let us consider a sequence $\{\mu_{i,k}\}_k\subset C_0^\infty(\Omega)$ converging in measure to $\mu_i$ with $\|\mu_{i,k}\|_{L^1(\Omega)}\leq \|\mu_i\|_{L^1(\Omega)}$. 
By Lemma~\ref{lem:dual-approx} for fixed $i$ we can find $k_i$ large enough, so that \[ f_i=\mu_{i,k_i}\in C_0^\infty(\Omega)\qquad \text{and}\qquad g_i=\mu_i-\mu_{i,k_i}\in (W_0^{1,\Phi}(\Omega))'\cap \Mb(\Omega),\]
such that \begin{equation}
\label{male}\|\mu_{i,k_i}\|_{L^1 (\Omega)}\leq \|\mup\|_{\Mb(\Omega)} \qquad \text{and}\qquad \|\mu_{i}-\mu_{i,k_i}\|_{(W_0^{1,\Phi}(\Omega))'}\leq 2^{-k_i}.
\end{equation}
 Up to a subsequence --  the series $\sum_{i=1}^\infty f_i$ is convergent in $L^1 (\Omega)$. We denote its limit as 
$f^0=\sum_{i=1}^\infty f_i\in L^1(\Omega)$. The convergence of $\{g_i\}$ follows directly from~\eqref{male}. We note that the series $\sum_{i=1}^\infty g_i$ converges in $(W_0^{1,\Phi}(\Omega))'$ and there exists its limit $g^0=\sum_{i=1}^\infty g_i\in(W_0^{1,\Phi}(\Omega))'$. Since $\mu_i=f_i+g_i$, the three series $\sum_{i=1}^\infty \mu_i,$ $\sum_{i=1}^\infty f_i,$ and $\sum_{i=1}^\infty g_i$ converge in the sense of distributions and, consequently, $\mup=f^0+g^0$.

\medskip

\noindent {\em Step 4.} Summary. If the measure was not nonnegative we conduct the above reasoning  on decomposition $\mup=(\mup)_++(\mup)_-$ separately for its positive and negative part.  Note that by monotonicity of capacity if $\capP(A)=0$, then $(\mup)_+(A)=0=(\mup)_-(A)$ and the  anisotropic capacity can be achieved over Borel sets included in $A$, see Lemma~\ref{lem:basic-1}. Clearly, wherever $\mu_\Phi$ is positive, so is $f$.  Thus for a signed measure $\mu\in\Mb(\Omega)$ it is equivalent that $\mu\in \MP(\Omega)$ and $\mu \in \Mb(\Omega)\cap\big(L^1(\Omega)+(W_0^{1,\Phi}(\Omega))'\big)$, that is -- by Lemma~\ref{lem:distr} -- when there exists $f\in L^1(\Omega)$ and $G\in L^{\wt\Phi}(\Omega;\rn)$, such that 
\[\mup=f-\dv G\ \text{ in the sense of distributions}.\]Indeed,  the proof starts with justification that $\mu \in \big(\Mb(\Omega)\cap(L^1(\Omega)+(W_0^{1,\Phi}(\Omega))')\big)\subset \MP(\Omega) .$ Thus, the proof of the capacitary characterization is completed.
\end{proof}

\section*{Appendix} 

We make use of the following classical results.

\begin{theorem}[Mazur's Lemma]\label{theo:mazur}
If $\{x_n\}$ converges weakly to $x$ in a reflexive Banach space $X$, then there exists a sequence $\{y_n\}_n\subset X$ made up of finite convex combinations of $x_n$'s such that  $y_n\to x$ (strongly) in $X$.
\end{theorem}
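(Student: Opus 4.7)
The plan is to reduce the lemma to the classical fact that a norm-closed convex subset of a Banach space coincides with its weak closure, and then invoke the definition of the closed convex hull. Concretely, I would set
\[
C = \overline{\conv}\{x_n : n \in \N\},
\]
the norm-closure in $X$ of the set of finite convex combinations of the $x_n$. Every element of $C$ is, by definition, a norm-limit of finite convex combinations of the $x_n$, so once I show $x \in C$ I can simply pick $y_n \in \conv\{x_k : k \in \N\}$ with $\|y_n - x\|_X \to 0$ and be done.

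The core step is therefore to prove $x \in C$. Since $C$ is convex and norm-closed, my plan is to show that $C$ is also weakly closed; since $x_n \in C$ for every $n$ and $x_n \rightharpoonup x$, this will immediately give $x \in C$. To obtain weak closedness, I would apply the Hahn--Banach separation theorem: if $z \notin C$, then $\{z\}$ and $C$ can be strictly separated by a continuous linear functional $\xi \in X'$, i.e.\ there exist $\xi \in X'$ and $\alpha \in \R$ with $\xi(z) > \alpha \geq \xi(c)$ for all $c \in C$. Consequently the half-space $\{y \in X : \xi(y) \leq \alpha\}$ is a weakly closed set containing $C$ but not $z$; taking the intersection of all such half-spaces over $z \in X \setminus C$ shows $C$ equals the intersection of weakly closed half-spaces and is therefore weakly closed.

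The main (and essentially only) obstacle is precisely this separation step, which relies on Hahn--Banach in its geometric form; reflexivity of $X$ is not strictly needed for the argument but is the setting in which the lemma is invoked in the paper (through Lemma~\ref{lem:cap-open}). Once $C$ is known to be weakly closed, the chain $x_n \in C$ and $x_n \rightharpoonup x$ yields $x \in C$, and unpacking the definition of $C$ produces the desired sequence $\{y_n\}$ of finite convex combinations of the $x_n$ converging to $x$ in norm, completing the proof.
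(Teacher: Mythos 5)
The paper states Mazur's Lemma in the Appendix as a classical result and gives no proof of its own, so there is nothing in the paper to compare against. Your argument is the standard one and is correct: you form the norm-closed convex hull $C=\overline{\conv}\{x_n\}$, show that $C$ is weakly closed by writing it as an intersection of closed half-spaces obtained from Hahn--Banach strict separation of a point $z\notin C$ from the closed convex set $C$ (exactly the form recorded in the paper as Theorem~\ref{theo:hyp-sep}, with $A=C$ and $B=\{z\}$ compact), deduce $x\in C$ from $x_n\rightharpoonup x$ and $x_n\in C$, and then unpack the definition of the norm closure to extract the desired sequence of finite convex combinations. Your remark that reflexivity is not actually needed is also correct; the lemma holds in any normed space. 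One small caveat worth flagging: the paper later invokes the slightly stronger ``tail'' form (convex combinations of $u_i$ with $i\geq j$, see the proof of Lemma~\ref{lem:cap-open}); your proof as written yields the form stated, and the tail version follows by applying the same argument to each tail sequence $\{x_k:k\geq n\}$, which converges weakly to the same $x$.
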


\begin{theorem}[Riesz representation]\label{theo:riesz}Let $\mu$ be a Radon measure on bounded $\overline{\Omega}\subset\rn$. Then there is a unique signed Borel measure $\nu$ on $\overline{\Omega}$ (that is, a measure defined on Borel sets of $\overline{\Omega}$) such that
\[\langle\mu, u\rangle=\int_{\overline{\Omega}} u\, d\nu\qquad\text{for every $u\in C(\overline{\Omega})$}.\]
\end{theorem}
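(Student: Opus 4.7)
The plan is to reduce the statement to the classical Riesz--Markov--Kakutani representation theorem for positive linear functionals on $C(\overline{\Omega})$. Because $\overline{\Omega}$ is compact (bounded and closed in $\rn$), we can view $\mu$ as a continuous linear functional on the Banach space $C(\overline{\Omega})$ equipped with the sup norm. First, I would decompose $\mu$ into a difference of two positive continuous linear functionals. Concretely, for nonnegative $u\in C(\overline{\Omega})$ I would set
\[
\mu_+(u)=\sup\{\langle\mu,v\rangle:\ v\in C(\overline{\Omega}),\ 0\leq v\leq u\},\qquad \mu_-=\mu_+-\mu,
\]
and check (in a standard way) that $\mu_\pm$ extend to positive continuous linear functionals on $C(\overline{\Omega})$. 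The bulk of the argument is then the construction of $\nu$ from $\mu_+$ and $\mu_-$ separately.

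Next, for a positive functional $\Lambda$ on $C(\overline{\Omega})$, I would define an outer measure by
\[
\nu^*(U)=\sup\bigl\{\Lambda(u):\ u\in C(\overline{\Omega}),\ 0\leq u\leq 1,\ \mathrm{supp}\,u\subset U\bigr\}\quad\text{for $U\subset\overline{\Omega}$ open,}
\]
extended to arbitrary sets via $\nu^*(E)=\inf\{\nu^*(U):\ U\supset E,\ U\text{ open}\}$. Standard Carath\'eodory-type arguments show that $\nu^*$ restricted to the $\sigma$-algebra of $\nu^*$-measurable sets is a Borel measure, and that every Borel set is $\nu^*$-measurable (using the finiteness of $\Lambda$ and outer regularity on $\overline{\Omega}$).

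To verify the integration identity $\Lambda(u)=\int_{\overline{\Omega}}u\,d\nu$ for $u\in C(\overline{\Omega})$, I would approximate a given nonnegative $u$ by finite linear combinations of indicator-like continuous functions supported on sets of the form $\{u>t_k\}$, obtained by slicing the range of $u$ into small layers and applying a partition-of-unity argument subordinate to a cover by the corresponding open super-level sets. The definition of $\nu$ on open sets is exactly tailored so that both sides match in the limit as the mesh of the slicing tends to zero; general $u$ is handled by splitting into $u_+-u_-$.

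Uniqueness is the cleanest part: if two finite signed Borel measures $\nu_1,\nu_2$ on the compact $\overline{\Omega}$ satisfy $\int u\,d\nu_1=\int u\,d\nu_2$ for all $u\in C(\overline{\Omega})$, then by outer regularity and by approximating $\mathbf{1}_K$ for a compact $K\subset\overline{\Omega}$ by a decreasing sequence of continuous cut-offs, one gets $\nu_1(K)=\nu_2(K)$, and hence $\nu_1=\nu_2$ on all Borel sets by a monotone-class argument. The main technical obstacle is the measurability/additivity part in the Carath\'eodory step, since inner and outer regularity have to be coordinated so that open sets are indeed $\nu^*$-measurable; everything else is bookkeeping.
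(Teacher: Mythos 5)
The paper does not actually prove this statement: it appears in the Appendix under the heading ``We make use of the following classical results,'' together with Mazur's Lemma and the Hahn--Banach theorems, all stated without proof and simply invoked later (e.g.\ in Lemma~\ref{lem:distr} and in Step~1 of the proof of Theorem~\ref{theo:decomp}). So there is no in-paper argument to compare against; the statement is a standard citation of the Riesz--Markov--Kakutani representation theorem on the compact set $\overline{\Omega}$.

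Your sketch is the standard textbook route (Rudin/Folland style) and is sound in outline: since $\overline{\Omega}$ is compact, a bounded functional on $C(\overline{\Omega})$ decomposes into positive and negative parts via $\mu_+(u)=\sup\{\langle\mu,v\rangle:\ 0\le v\le u\}$ (with the usual splitting $v=\min(v,u_1)+(v-\min(v,u_1))$ to verify additivity of $\mu_+$); each positive part is represented by the outer measure defined through suprema over continuous functions compactly supported in open sets; the representation identity is checked by a layer-cake/partition-of-unity argument; and uniqueness follows from regularity and approximation of $\mathbf{1}_K$ by continuous cut-offs. You correctly flag that the genuinely technical step is the Carath\'eodory measurability of open sets, and you correctly note that on a compact metric space every Borel measure is automatically Radon, so inner and outer regularity pose no extra difficulty. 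Two small remarks: first, the paper's phrase ``Radon measure'' is being used in the Bourbaki sense of a continuous linear functional on $C(\overline{\Omega})$ (as the notation $\langle\mu,u\rangle$ and the later applications make clear), and your reading is the intended one; second, in the defining supremum for $\nu^*(U)$ one should take $U$ relatively open in $\overline{\Omega}$ and require $\operatorname{supp} u$ to be a (automatically compact) subset of $U$, which you implicitly do. With that understanding, the sketch is correct.
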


\begin{theorem}[Hahn--Banach extension]\label{theo:hahn-banach}
Let $Y$ be a subspace of a real normed linear space $X$, and suppose that $\theta$ is a continuous linear functional on $Y$ for which there exists a positive constant $M$ satisfying $|\theta(y)| \leq M\|y\|_Y$ for all $y\in Y$. Then there exists an extension of $\vp$ to a continuous linear functional $\vartheta$ on $X$, such that $|\vartheta(x)|\leq M\|x\|_X$ for all $x\in X$.
\end{theorem}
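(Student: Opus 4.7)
The plan is the standard two--step construction: first extend $\theta$ by one dimension while keeping the bound, then invoke Zorn's lemma to obtain an extension defined on all of $X$. Throughout I would work with the sublinear functional $p(x)=M\|x\|_X$, so the problem reduces to dominating $|\theta|$ by $p$ and extending under the same domination.

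\textbf{Step 1 (one--dimensional extension).} Suppose $Y\subsetneq X$ and pick $x_0\in X\setminus Y$. Let $Y_1=Y+\R x_0$ and, for any candidate value $c\in\R$, define $\vartheta_1(y+tx_0)=\theta(y)+tc$. I would look for $c$ so that $|\vartheta_1(z)|\le p(z)$ for every $z\in Y_1$. Unwinding definitions and using homogeneity in $t$, this is equivalent to requiring
\[
\sup_{y\in Y}\bigl(-\theta(y)-M\|y+x_0\|_X\bigr)\;\le\;c\;\le\;\inf_{y\in Y}\bigl(-\theta(y)+M\|y+x_0\|_X\bigr).
\]
Such a $c$ exists because for all $y_1,y_2\in Y$ one has
\[
\theta(y_2)-\theta(y_1)=\theta(y_2-y_1)\le M\|y_2-y_1\|_X\le M\|y_1+x_0\|_X+M\|y_2+x_0\|_X,
\]
which says the left supremum is bounded above by the right infimum. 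Pick any $c$ in this interval.

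\textbf{Step 2 (Zorn's lemma).} Consider the family $\mathcal{F}$ of all pairs $(Z,\vartheta_Z)$ with $Y\subset Z\subset X$ a linear subspace, $\vartheta_Z$ a linear extension of $\theta$, and $|\vartheta_Z(z)|\le M\|z\|_X$ on $Z$. Order $\mathcal{F}$ by $(Z,\vartheta_Z)\le(Z',\vartheta_{Z'})$ if $Z\subset Z'$ and $\vartheta_{Z'}|_Z=\vartheta_Z$. Any chain has an upper bound, namely the union of its domains with the coherent extension, so Zorn produces a maximal element $(Z^{\ast},\vartheta)$. If $Z^\ast\neq X$, Step~1 contradicts maximality; hence $Z^\ast=X$ and $\vartheta$ is the desired extension with $|\vartheta(x)|\le M\|x\|_X$ for all $x\in X$.

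\textbf{Main obstacle.} The only genuinely non-routine point is verifying, in Step~1, that the two sides of the interval inequality above are correctly ordered; this is exactly where the linearity and the bound $|\theta(y)|\le M\|y\|_Y$ together with the triangle inequality combine. Zorn's lemma handles the transfinite book-keeping and is routine once Step~1 is in place. No tools from the Orlicz--Sobolev framework of the paper are required.
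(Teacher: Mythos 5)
Your proof is the standard and correct Hahn--Banach argument: a one-dimensional extension step controlled by the sublinear majorant $p(x)=M\|x\|_X$, followed by Zorn's lemma. The paper itself does not prove this statement --- it appears in the Appendix under the heading ``We make use of the following classical results'' precisely so that it can be invoked without proof (it is used in Lemma~\ref{lem:distr} to extend a functional from the image of the gradient map to all of $L^\Phi(\Omega;\rn)$). So there is nothing in the paper to compare against; your argument supplies a proof where the authors deliberately cite a textbook fact. Two small remarks: (i) the interval inequality in Step~1 is exactly equivalent to the required bound after dividing by $|t|$ and using positive homogeneity of both sides, a reduction you gesture at but could make fully explicit for completeness; (ii) the statement in the paper has a typo --- ``an extension of $\vp$'' should read ``an extension of $\theta$'' --- which your proof correctly repairs by working with $\theta$ throughout.
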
 
 
\begin{theorem}[Hahn--Banach hyperplane separation] If $A,B$ are non-empty, closed and disjoint convex subsets of a normed space $X$ and $B$ is compact, then there exists a~continuous linear functional $\vartheta$ on $X$ and $s\in\R$, such  that for all $a\in A$ and $b\in B$ we have $\vartheta(a)\leq s<\vartheta(b)$.\label{theo:hyp-sep}
\end{theorem}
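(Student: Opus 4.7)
\bigskip

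\noindent\textbf{Proof proposal for Theorem~\ref{theo:hyp-sep}.} The plan is a classical three-step reduction to the Hahn--Banach extension theorem (Theorem~\ref{theo:hahn-banach}), via the auxiliary set
\[
	C := A - B = \{a - b:\ a \in A,\ b\in B\}.
\]
First I would record the elementary structural facts: $C$ is convex as the algebraic sum of two convex sets, and $0\notin C$ since $A\cap B=\emptyset$. The pivotal analytic step is to show that \emph{$C$ is closed} in $X$: given $a_n - b_n \to c$, compactness of $B$ supplies a subsequence $b_{n_k}\to b\in B$, whence $a_{n_k} \to c+b$, and closedness of $A$ forces $c+b\in A$, so $c=(c+b)-b\in C$. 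Consequently, $X\setminus C$ is an open neighbourhood of $0$, so there exists $r>0$ with $B(0,r)\cap C=\emptyset$.

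Next I would realise the separation by passing to a sublinear functional. Set $U=B(0,r)$ and consider the translated open convex set $U - C = \{u-c:\ u\in U,\, c\in C\}$, which is convex, open, and does not contain $0$ (if $u=c$ then $c\in U\cap C=\emptyset$). Let $p$ be the Minkowski functional of $U-C+w_0$ for some fixed $w_0\in U-C$; it is sublinear, continuous, satisfies $p(w)\leq 1$ on $U-C+w_0$, and $p(w_0)>1$ because $0\notin U-C$. Define a linear functional $\ell$ on the one-dimensional span of $w_0$ by $\ell(tw_0)=t\,p(w_0)$; it is dominated by $p$. By Theorem~\ref{theo:hahn-banach} extend $\ell$ to a continuous linear $\vartheta$ on $X$ with $\vartheta\leq p$ pointwise, hence $\vartheta(u)-\vartheta(c)\leq p(u-c+w_0)-\vartheta(w_0)\leq 1 - p(w_0)<0$ for all $u\in U$, $c\in C$. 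In particular $\sup_{u\in U}\vartheta(u) < \inf_{c\in C}\vartheta(c)$, and $\vartheta$ is continuous because it is bounded on the open ball $U$.

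Finally I would translate this inequality back to $A$ and $B$. From $\vartheta(a-b)\geq \alpha$ for all $a\in A,\ b\in B$ with some $\alpha>0$, it follows that $\inf_{a\in A}\vartheta(a)\geq \sup_{b\in B}\vartheta(b)+\alpha$, so picking any $s$ in the non-degenerate interval $\bigl[\sup_{b\in B}\vartheta(b),\inf_{a\in A}\vartheta(a)-\alpha/2\bigr]$ and replacing $\vartheta$ by $-\vartheta$ (and $s$ by $-s$) gives the asserted strict inequality $\vartheta(a)\leq s<\vartheta(b)$ for all $a\in A$, $b\in B$.

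The main obstacle is the closedness of $C=A-B$: without the compactness hypothesis on $B$, the difference of two closed convex sets need not be closed, and strict separation can fail (this is why a mere closed-disjoint pair is not enough). All the remaining ingredients -- convexity of the difference, continuity from boundedness on a ball, and the extension step -- are routine once this closedness is in hand.
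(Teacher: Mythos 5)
The paper does not give a proof of Theorem~\ref{theo:hyp-sep}: it is listed in the Appendix under the heading ``We make use of the following classical results,'' together with Mazur's Lemma, the Riesz representation theorem, and the Hahn--Banach extension theorem, none of which is proved there. There is therefore no argument of the paper to compare yours against; what follows assesses your proposal on its own terms.

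Your skeleton is the standard textbook one and is essentially sound: $C=A-B$ is convex and avoids $0$; your sequential argument for closedness of $C$ is valid (compactness is sequential compactness since a normed space is metric); hence an open ball $U$ about $0$ misses $C$, and one separates $0$ from $C$ via the gauge of a suitable open convex neighbourhood of $0$. Two points need attention. First, the gauge bookkeeping has the signs scrambled: with $w_0\in V:=U-C$ the set $V+w_0$ need not contain $0$, so its Minkowski functional is not globally defined; you want the gauge $p$ of $V-w_0$ (equivalently, choose $w_0\in C-U$ in your notation), with $p<1$ on $V-w_0$ by openness and $p(-w_0)\ge 1$ since $0\notin V$ --- the latter inequality need not be strict, but strictness of $p<1$ on the open set still yields the required strict inequality at the end. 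Second, and more substantively, your extension step needs Hahn--Banach with domination by the sublinear functional $p$, i.e.\ an extension $\vartheta$ with $\vartheta\le p$ on all of $X$, whereas the paper's Theorem~\ref{theo:hahn-banach} is stated only in the norm-domination form $|\vartheta|\le M\|\cdot\|$. Bounding $p\le M\|\cdot\|$ lets you apply that theorem, but the resulting extension is only guaranteed to satisfy $|\vartheta|\le M\|\cdot\|$, not $\vartheta\le p$, and it is precisely $\vartheta\le p$ that forces $\sup_{U}\vartheta<\inf_{C}\vartheta$. As written the reduction to the cited theorem therefore has a gap; you should invoke the sublinear form of the Hahn--Banach theorem (or its geometric corollary separating a point from an open convex set) rather than Theorem~\ref{theo:hahn-banach} as stated in the Appendix.
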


\end{document}